
\documentclass [12pt,a4paper,reqno]{amsart}
\usepackage{amsmath,amsfonts,amssymb,amscd,calrsfs,verbatim,delarray}
\usepackage[arrow,matrix]{xy}

\textwidth 165mm \textheight 230mm \topmargin -5mm \evensidemargin
-2mm \oddsidemargin -2mm

\pagestyle{headings}

\DeclareMathOperator{\tr}{tr}
\DeclareMathOperator{\Sz}{Sz}
\DeclareMathOperator{\PSL}{PSL}

\DeclareMathOperator{\SL}{SL}
\DeclareMathOperator{\SU}{SU}
\DeclareMathOperator{\Sp}{Sp}

\DeclareMathOperator{\Irr}{Irr}

\chardef\bslash=`\\ 





\hfuzz1pc 



\numberwithin{equation}{section}
\newtheorem{theorem}{Theorem}[section]
\newtheorem{prop}[theorem]{Proposition}
\newtheorem{lemma}[theorem]{Lemma}
\newtheorem{corr}[theorem]{Corollary}
\newtheorem{conj}[theorem]{Conjecture}
\newtheorem{property}[theorem]{Properties}

\newtheorem{theoremS}{Theorem}

\newtheorem{corrS}[theoremS]{Corollary}

\theoremstyle{definition}

\newtheorem{defn}[theorem]{Definition}
\newtheorem{quest}[theorem]{Question}
\newtheorem{example}[theorem]{Example}
\newtheorem{remark}[theorem]{Remark}

\newcommand{\thmref}[1]{Theorem~\ref{#1}}
\newcommand{\secref}[1]{Section~\ref{#1}}

\newcommand{\lemref}[1]{Lemma~\ref{#1}}
\newcommand{\corref}[1]{Corollary~\ref{#1}}

\newcommand{\remarkref}[1]{Remark~\ref{#1}}

\newcommand{\defnref}[1]{Definition~\ref{#1}}
\newcommand{\propref}[1]{Proposition~\ref{#1}}
\newcommand{\prref}[1]{Properties~\ref{#1}}
\newcommand{\conref}[1]{Conjecture~\ref{#1}}

\newcommand{\beq}{\begin{equation}}
\newcommand{\ef}{\end{equation}}


\newcommand{\BP}{\mathbb{P}}

\newcommand{\BN}{\mathbb{N}}
\newcommand{\BZ}{\mathbb{Z}}\newcommand{\BF}{\mathbb{F}}
\newcommand{\BA}{\mathbb {A}}

\newcommand{\st}{\sigma}
\renewcommand{\k}{\varkappa}
\renewcommand{\th}{\theta}

\newcommand{\g}{\gamma}
\newcommand{\be}{\beta}
\newcommand{\dl}{\delta}
\newcommand{\om}{\omega}
\newcommand{\vp}{\varphi}

\renewcommand{\a}{\alpha}

\newcommand{\ep}{\epsilon}

\newcommand{\G}{\Gamma}

\newcommand{\ov}{\overline}

\newcommand{\tG}{\widetilde G}

\newcommand{\de}{\partial}
\newcommand{\fc}{\frac}

\newcommand{\fr}{\frac}
\begin{document}

\title{On the Surjectivity of Engel Words on $\PSL(2,q)$}

\author{Tatiana Bandman, Shelly Garion and Fritz Grunewald}

\address{Bandman: Department of Mathematics, Bar-Ilan University, 52900 Ramat Gan, ISRAEL}
\email{bandman@macs.biu.ac.il}

\address{Garion: Institut des Hautes \'{E}tudes Scientifiques, 91440 Bures-sur-Yvette, FRANCE}
\email{shellyg@ihes.fr}

\subjclass[2000] {14G05, 14G15, 20D06, 20G40, 37P25, 37P35, 37P55.}

\keywords {Engel words, special linear group, arithmetic dynamics,
periodic points, finite fields, trace map.}



\begin{abstract}
We investigate the surjectivity of the word map defined by the
$n$-th Engel word on the groups $\PSL(2,q)$ and $\SL(2,q)$. For
$\SL(2,q)$, we show that this map is surjective onto the subset
$\SL(2,q)\setminus\{-id\}\subset \SL(2,q)$ provided that $q \geq
q_0(n)$ is sufficiently large. Moreover, we give an estimate for
$q_0(n)$. We also present examples demonstrating that this does not
hold for all $q$.

We conclude that the $n$-th Engel word map is surjective for the
groups $\PSL(2,q)$ when $q \geq q_0(n)$. By using the computer, we
sharpen this result and show that for any $n \leq 4$, the
corresponding map is surjective for \emph{all} the groups
$\PSL(2,q)$. This provides evidence for a conjecture of Shalev
regarding Engel words in finite simple groups.

In addition, we show that the $n$-th Engel word map is almost
measure preserving for the family of groups $\PSL(2,q)$, with $q$
odd, answering another question of Shalev.

Our techniques are based on the method developed by Bandman,
Grunewald and Kunyavskii for verbal dynamical systems in the group
$\SL(2,q)$.
\end{abstract}

\maketitle

\section{Introduction}\label{intro}

\subsection{Word maps in finite simple groups}

During the last years there was a great interest in \emph{word maps} in groups
(for an extensive survey see \cite{Se}). These maps are defined as follows.
Let $w=w(x_1,\dots,x_d)$ be a non-trivial \emph{group word}, namely a non-identity element of the free group
$F_d$ on $x_1,\dots,x_d$.
Then we may write $w=x_{i_1}^{n_1}x_{i_2}^{n_2}\dots x_{i_k}^{n_k}$ where $1 \leq i_j \leq d$, $n_j \in \BZ$,
and we may assume further that $w$ is reduced.
Let $G$ be a group. For $g_1,\dots,g_d$ we write
$$
   w(g_1,\dots,g_d)=g_{i_1}^{n_1}g_{i_2}^{n_2}\dots g_{i_k}^{n_k} \in G,
$$
and define
$$
   w(G) = \{w(g_1,\dots,g_d): g_1,\dots,g_d \in G \},
$$
as the set of values of $w$ in $G$.
The corresponding map $w:G^d \rightarrow G$ is called a \emph{word map}.

It is interesting to estimate the size of $w(G)$. Borel \cite{Bo}
showed that the word map induced by $w \neq 1$ on simple algebraic
groups is a dominant map. Larsen \cite{La} used this result to show
that for every non-trivial word $w$ and $\ep>0$ there exists a
number $C(w,\ep)$ such that if $G$ is a finite simple group with
$|G|>C(w,\ep)$ then $|w(G)| \geq |G|^{1-\ep}$. By a celebrated
result of Shalev \cite{Sh09} one has that for every non-trivial word
$w$ there exists a constant $C(w)$ such that if $G$ is a finite
simple group satisfying $|G|> C(w)$ then $w(G)^3=G$. These results
were substantially improved by Larsen and Shalev \cite{LS} for
various families of finite simple groups, and have recently been
generalized by Larsen, Shalev and Tiep \cite{LST}.

One can therefore ask whether $w(G)=G$ for any non-trivial word $w$
and all finite simple non-abelian groups $G$. The answer to this
question is clearly negative. It is easy to see that if $G$ is a
finite group and $m$ is an integer which is not relatively prime to
the order of $G$ then for the word $w=x_1^m$ one has that $w(G) \neq
G$. Hence, if $v \in F_d$ is any word, then the word map
corresponding to $w=v^m$ cannot be surjective. A natural question,
suggested by Shalev, is whether these words are generally the only
exceptions for non-surjective word maps in finite simple non-abelian
groups. In particular, the following conjecture was raised:

\begin{conj}[Shalev]\cite[Conjectures 2.8 and 2.9]{Sh07}. \label{conj.surjall}
Let $w \ne 1$ be a word which is not a proper power of another word.
Then there exists a number $C(w)$ such that, if $G$ is either $A_r$
or a finite simple group of Lie type of rank $r$, where $r > C(w)$,
then $w(G)=G$.
\end{conj}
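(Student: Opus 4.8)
The plan is to reduce the conjecture to a statement about fibers of the word map and then attack it in two complementary regimes according to the size of $q$ relative to the rank $r$. Write $G = \mathbf{G}^F$ for a simple algebraic group $\mathbf{G}$ over $\ov{\BF}_q$ of rank $r$ with Frobenius endomorphism $F$, and fix a target $g \in G$; surjectivity is exactly the assertion that the fiber $w^{-1}(g) \subseteq G^d$ is nonempty for every such $g$. The hypothesis that $w$ is not a proper power is used precisely to exclude the power-map obstruction: if $w = v^m$ then $w(G)$ lies in the image of the $m$-th power map, which can be proper when $\gcd(m,|G|) > 1$, so some such hypothesis is unavoidable.

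First I would treat the regime where $q$ is large compared with a function of $r$. By Borel's theorem the word map $w \colon \mathbf{G}^d \to \mathbf{G}$ is dominant, so for generic $g$ the fiber is a variety of the expected dimension $(d-1)\dim \mathbf{G}$, cut out inside $\mathbf{G}^d$ by the $\dim\mathbf{G}$ equations $w(g_1,\dots,g_d) = g$. Counting $\BF_q$-rational points by the Lang--Weil estimates gives a main term of size $q^{(d-1)\dim\mathbf{G}}$, which forces the fiber to carry a rational point, and hence $g \in w(G)$, as soon as $q$ exceeds a bound depending on the geometric complexity (degree and number of irreducible components) of the fiber. The remaining special classes $g$ (unipotent, central, or otherwise non-generic) would be handled separately, using that dominance persists on the relevant subvarieties.

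The harder regime is that of \emph{bounded} $q$ with $r \to \infty$, where point-counting is useless and one must instead exploit the representation theory of $G$. Here I would express the number of solutions $N_w(g)$ via character-theoretic expansions generalizing the Frobenius formula, so that $N_w(g) > 0$ follows once the contribution of the nontrivial characters is dominated by that of the trivial character. The key input is that for groups of Lie type of growing rank the character ratios $|\chi(g)|/\chi(1)$ become small and the Witten-type zeta sums $\sum_{\chi \in \Irr(G)} \chi(1)^{-s}$ converge rapidly, in the spirit of Larsen, Shalev and Tiep; high rank is exactly what compensates for small $q$.

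The main obstacle, and the reason the full conjecture remains open, is twofold. On the geometric side the Lang--Weil error terms grow with the complexity of the fiber, which itself grows with $\dim\mathbf{G}$, so the threshold on $q$ cannot be taken uniform in $r$; this is why the bounded-$q$ case genuinely requires the representation-theoretic input. On the arithmetic side, a general word $w$ does not admit a clean character formula of Frobenius type, so the crux will be to reduce an arbitrary $w$ to admissible pieces, for instance by fixing all but two of the variables and analysing the resulting twisted one- or two-variable word, for which the character sums can be estimated. Controlling these sums uniformly for all $g \in G$ and all bounded $q$, for arbitrary $w$, is the essential difficulty; the present paper sidesteps it in rank one by replacing character sums with the trace-map dynamical system, which is what makes the $\PSL(2,q)$ and $\SL(2,q)$ case tractable.
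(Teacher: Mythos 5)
The statement you were asked to prove is Conjecture \ref{conj.surjall}, an open conjecture of Shalev quoted from \cite{Sh07}; the paper offers no proof of it and does not claim one. What the paper actually establishes is a very special case of the related Conjecture \ref{shalev}, namely surjectivity (and almost surjectivity) of the Engel word maps on $\PSL(2,q)$ and $\SL(2,q)$, obtained by reducing to the dynamics of the trace map $\mu(s,t)=(s^2-st^2+2t^2-2,t)$ on $\BA^2_{s,t}$ and applying Weil-type point counts to the resulting curves. So there is no ``paper's own proof'' against which to compare your argument.

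Taken on its own terms, your proposal is not a proof either: it is an accurate survey of the two standard attack routes (Borel dominance plus Lang--Weil estimates when $q$ is large relative to $r$, and character-ratio and Witten zeta function estimates in the spirit of Larsen, Shalev and Tiep when $q$ is bounded and $r\to\infty$), and you correctly identify why each route fails to close. But identifying the obstructions is not the same as overcoming them, and your own text concedes that the conjecture remains open. Concretely, the gap is the entire bounded-$q$ regime: there is no known mechanism for bounding the nontrivial character contributions to $N_w(g)$ for an arbitrary non-power word $w$, since no Frobenius-type formula exists for general words, and the proposed reduction of fixing all but two variables is precisely the step that is not known to work; even the much weaker statements $w(G)^2=G$ and $w(G)^3=G$ of \cite{LS} and \cite{LST} require substantial machinery and still fall short of surjectivity. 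The Lang--Weil regime likewise only yields a threshold $q_0$ depending on the geometric complexity of the fiber, hence on $r$, so it cannot cover all groups of a given large rank. For the purposes of this paper, Conjecture \ref{conj.surjall} should be read as motivation only, with the trace-map method supplying a genuine proof solely in the rank-one Engel case.
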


It is now known that for the commutator word $w=[x,y] \in F_2$, one
has that $w(G)=G$ for any finite simple non-abelian group $G$. This
statement is the well-known \emph{Ore Conjecture}, originally posed
in 1951 and proved by Ore himself for the alternating groups
\cite{Or}. During the years, this conjecture was proved for various
families of finite simple groups (see \cite{LOST} and the references
therein). Thompson \cite{Th} established it for the groups
$\PSL(n,q)$, later Ellers and Gordeev \cite{EG} proved the
conjecture for all finite simple groups of Lie type defined over a
field with more than $8$ elements, and recently the proof was
completed for all finite simple groups in a celebrated work of
Liebeck, O'Brien, Shalev and Tiep \cite{LOST}.

There was also an interest in quasisimple groups. By \cite{Th} and
\cite{LOST}, in every quasisimple classical group $\SL(n,q)$,
$\SU(n,q)$, $\Sp(n,q)$, $\Omega^{\pm}(n,q)$, every element is a
commutator (a \emph{quasisimple} group $G$ is a perfect group such
that $G/Z(G)$ is simple). However it is not true that every element
of every quasisimple group is a commutator, see the examples in
\cite{Bl}.


\subsection{Engel words}

After considering the commutator word, it is natural to consider the Engel words.
These words are defined recursively as follows.

\begin{defn}
The \emph{$n$-th Engel word} $e_n(x,y) \in F_2$ is defined recursively by
\begin{align*}
  e_1(x,y)&= [x,y] = xyx^{-1}y^{-1} , \\
  e_n(x,y)&= [e_{n-1},y] , \text{ for } n>1 .
\end{align*}
For a group $G$, the corresponding map $e_n: G \times G \rightarrow G$ is called
the \emph{$n$-th Engel word map}.
\end{defn}

Now, the following conjecture is naturally raised.

\begin{conj}[Shalev]\label{shalev}
Let $n \in \BN$, then the $n$-th Engel word map is surjective for
any finite simple non-abelian group $G$.
\end{conj}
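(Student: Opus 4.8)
\emph{This statement is a conjecture, so a complete proof is out of reach; the plan is to establish it for the infinite family $\PSL(2,q)$, thereby supplying the evidence promised in the abstract.} Concretely I would prove the sharper claim that $e_n$ maps $\SL(2,q)\times\SL(2,q)$ onto $\SL(2,q)\setminus\{-\mathrm{id}\}$ for $q\ge q_0(n)$. First I would reduce $\PSL(2,q)$ to $\SL(2,q)$: since $e_n$ is a product of commutators it takes values in $\SL$, and a value set containing $\SL(2,q)\setminus\{-\mathrm{id}\}$ surjects onto $\PSL(2,q)$ (the identity being hit by commuting pairs). Next I would use that a non-central element of $\SL(2,q)$ is determined up to conjugacy by its trace, so it suffices to realise every $\tau\in\BF_q$ as $\tr(e_n(x,y))$ by a \emph{regular} (non-central) value.

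The engine is the trace calculus. Write $s=\tr x$, $u=\tr y$, $t=\tr(xy)$ and set $e_0=x$, so that $e_k=[e_{k-1},y]$ holds for all $k\ge1$. The crucial collapse is $\tr(e_k y)=\tr\!\big(e_{k-1}\,y\,e_{k-1}^{-1}\big)=\tr(y)=u$ for every $k\ge1$, which eliminates one trace coordinate. Feeding $\tr(e_k y)=u$ into the Fricke identity $\tr(ABA^{-1}B^{-1})=(\tr A)^2+(\tr B)^2+(\tr AB)^2-\tr A\,\tr B\,\tr AB-2$ then collapses everything to the \emph{one variable} recursion $\tr(e_{k+1})=f_u(\tr e_k)$, where $f_u(a)=a^2-u^2a+2u^2-2$, starting from $\tr(e_1)=\tr[x,y]=s^2+u^2+t^2-sut-2$. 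Hence $\tr(e_n(x,y))=f_u^{(n-1)}\!\big(\tr[x,y]\big)$, and the whole problem becomes one about the arithmetic dynamics of the quadratic family $f_u$.

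For fixed $u\ne\pm2$ the binary form $s^2-ust+t^2$ is non-degenerate (its discriminant is $u^2-4$), and a non-degenerate binary form over a finite field is universal, so $\tr[x,y]$ attains every value of $\BF_q$. Thus hitting a target $\tau$ is equivalent to solving $f_u^{(n-1)}(a)=\tau$ in $(u,a)$, and I would study the affine plane curve $C_{n,\tau}:\ f_u^{(n-1)}(a)=\tau$, seeking an $\BF_q$-point with $u\ne\pm2$ that lifts to an actual pair $(x,y)$ with non-central $e_n$ (the reducible pairs forming only a lower-dimensional locus). For large $q$ this reduces, via the Weil/Lang--Weil bounds, to exhibiting a geometrically irreducible component of $C_{n,\tau}$ of controlled genus; since $\deg_a f_u^{(n-1)}=2^{\,n-1}$, the resulting threshold $q_0(n)$ will grow roughly like that degree. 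Finally I would dispose of the degenerate traces $\tau=\pm2$ and the lines $u=\pm2$ by hand, confirm that $-\mathrm{id}$ (trace $-2$, central) is the genuine exception, and for $n\le4$ replace the asymptotic input by an exhaustive machine computation to cover \emph{all} $q$.

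The hard part will be the irreducibility and genus control for $C_{n,\tau}$. The iterate $f_u^{(n-1)}$ has degree $2^{\,n-1}$, so $C_{n,\tau}$ is a high-degree curve of dynatomic type, and one must rule out spurious factorisations arising from the periodic and preperiodic structure of $f_u$ (its cycles and critical orbits) and bound the genus uniformly in $\tau$. This analysis is precisely what feeds the explicit estimate for $q_0(n)$, and it is where the dynamical machinery of Bandman--Grunewald--Kunyavskii is essential.
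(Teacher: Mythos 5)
Your proposal correctly treats the statement as a conjecture and outlines exactly the paper's route for the provable cases: the trace-map collapse $\tr(e_k y)=\tr(y)$ reducing everything to the one-parameter quadratic recursion $a\mapsto a^2-u^2a+2u^2-2$, Weil-type point counts on the fiber curves for large $q$, separate hand treatment of traces $\pm 2$ and of $-\mathrm{id}$, and machine verification for $n\le 4$. The one substantive step you flag but do not carry out --- irreducibility over $\BF_q$ and genus control for the iterated fiber curve --- is precisely what the paper supplies, by realising the fiber as a tower of double covers whose points at infinity are smooth and $\BF_q$-rational (forcing every component to be defined over $\BF_q$) and bounding the genus by Hurwitz.
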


For some (small) finite simple non-abelian groups this conjecture
was verified by O'Brien using the \textsc{Magma} computer program.

Note that in order to complete the proof of Ore's Conjecture,
Liebeck, O'Brien, Shalev and Tiep used the classical criterion
dating back to Frobenius, characterizing the possibility of writing
an element $g$ in a finite group $G$ as a commutator by the
non-vanishing of the character sum
$$\sum_{\chi \in \Irr(G)} \frac{\chi(g)}{\chi(1)},$$
(see \cite{LOST} and the references therein). Unfortunately, it is
unknown whether there is an analogous criterion for the possibility
of writing an element as an Engel word $e_n$, $n>1$. Hence, Shalev's
Conjecture seems to be substantially more difficult than Ore's
Conjecture, even for certain families of finite simple groups, such
as $\PSL(2,q)$.


\subsection{Engel words in $\PSL(2,q)$ and $\SL(2,q)$}

We consider Engel words in the particular case of the groups $\PSL(2,q)$ and $\SL(2,q)$,
in an attempt to prove \conref{shalev} for the group $\PSL(2,q)$.

By Thompson \cite{Th}, every element of $\SL(n,q)$, except when
$(n,q)=(2,2),(2,3)$, is a commutator (including the central
elements). Moreover, Blau \cite{Bl} proved that with a few specified
exceptions, every central element of a finite quasisimple group is a
commutator. In particular, if $G$ is a quasisimple group of simply
connected Lie type, then every element of $Z(G)$ is a commutator.
Interestingly, such a result fails to hold for Engel words.

Indeed, in the group $\SL(2,q)$, where $q$ is odd, if $n\geq n_0(q)$ is large enough then
the central element $-id$ cannot be written as an $n$-th Engel word, namely
$e_n(x,y)\ne -id$ for any $x,y \in \SL(2,q)$ (see \propref{prop.minid}), implying that
the $n$-th Engel word map is not surjective.
This leads us to introduce the following notion of ``almost surjectivity''.

\begin{defn}\label{almostsurj}
A word map $w: \SL(2,q)^d\to \SL(2,q)$ is \emph{almost surjective}
if $w(\SL(2,q))=\SL(2,q)\setminus \{-id\}.$
\end{defn}

A method for investigating verbal dynamical systems in the group $\SL(2,q)$, using the
so-called \emph{trace map}, was introduced in \cite{BGK}.
We use this method to study the dynamics of the trace map instead of
solving equations in groups. There is a special property of the
Engel word $e_n(x,y)$ which makes the dynamics of the trace map
particularly amenable to analysis: for a group $G$ the morphism
$G^2\to G^2$ defined as $(x,y)\mapsto (e_n(x,y),y)$ is not dominant.
Using this method we obtain the following result.

\begin{theoremS}\label{thm.engel}
Let $n \in \BN$, then the $n$-th Engel word map is almost surjective
for the group $\SL(2,q)$ provided that $q\geq q_0(n)$ is
sufficiently large.
\end{theoremS}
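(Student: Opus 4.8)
The plan is to translate the group-theoretic statement into a question about a one-dimensional polynomial dynamical system on traces, and then to solve the resulting equation over $\BF_q$ by an irreducibility-plus-point-counting argument. First I would record the classical Fricke trace identities for $\SL(2)$, in particular $\tr[A,B]=(\tr A)^2+(\tr B)^2+(\tr AB)^2-(\tr A)(\tr B)(\tr AB)-2$. Writing $r=\tr y$, $s=\tr x$, $t=\tr(xy)$ and $s_k=\tr(e_k(x,y))$, the decisive observation is that $e_k(x,y)\,y=e_{k-1}(x,y)\,y\,e_{k-1}(x,y)^{-1}$ (with $e_0:=x$), so that
$$\tr(e_k(x,y)\,y)=\tr(y)=r \qquad (k\ge 1);$$
this is exactly the non-dominance of $(x,y)\mapsto(e_n(x,y),y)$ noted above. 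Feeding it into the commutator identity collapses the a priori three-dimensional trace dynamics to the single recursion $s_k=\phi_r(s_{k-1})$ for $k\ge2$, where $\phi_r(z)=z^2-r^2z+2r^2-2$, with initial value $s_1=s^2+r^2+t^2-srt-2$. Hence $\tr(e_n(x,y))=P_n(s,r,t):=\phi_r^{(n-1)}\big(s^2+r^2+t^2-srt-2\big)$, where $\phi_r^{(n-1)}$ denotes the $(n-1)$-fold iterate.

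Second, I would reduce almost surjectivity to the surjectivity of the \emph{trace} map $P_n$. Conjugating the pair $(x,y)$ conjugates $e_n(x,y)$ and preserves $(s,r,t)$, so it suffices to realise every conjugacy class of $\SL(2,q)\setminus\{-id\}$. For $\tau\neq\pm2$ the class is determined by its trace $\tau$, so I only need $(s,r,t)\in\BF_q^3$ with $P_n(s,r,t)=\tau$ together with the non-degeneracy condition $s^2+r^2+t^2-srt-4\neq0$, which is equivalent to $\tr[x,y]\neq2$ and thus guarantees, via the standard lifting lemma of \cite{BGK}, that the prescribed traces are realised by an irreducible pair $x,y$. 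The finitely many classes with $\tau=\pm2$ (unipotent and negative-unipotent) are handled separately by exhibiting solutions for which $e_n(x,y)$ is non-central, the genuinely unreachable value $-id$ being exactly the one excluded by \propref{prop.minid}.

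Third --- the technical heart --- I would show that for every admissible $\tau$ the fibre $S_\tau=\{P_n=\tau\}\subset\BA^3$ contains a geometrically irreducible component defined over $\BF_q$, and then invoke the Lang--Weil and Weil estimates to produce $\ge q^2-c(n)q^{3/2}$ points on it, hence a point satisfying the open non-degeneracy condition once $q\ge q_0(n)$. Viewing $S_\tau$ as a conic bundle over the ``spectral curve'' $\{(r,w):\phi_r^{(n-1)}(w)=\tau\}$, irreducibility of $S_\tau$ follows from irreducibility of this spectral curve together with the genericity of the fibre conic $s^2-rst+t^2=w-r^2+2$. The degrees of $\phi_r^{(n-1)}$ grow like $2^{n-1}$, which both controls the Lang--Weil error term and yields the explicit estimate for $q_0(n)$.

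The main obstacle is precisely the geometric irreducibility of the iterated curve $\{(r,w):\phi_r^{(n-1)}(w)=\tau\}$, uniformly in $n$ and $\tau$: this is a dynamical statement about the iterates of the quadratic family $\phi_r$, and it is governed by the critical and periodic points of $\phi_r$, whose orbits control the ramification of $\phi_r^{(n-1)}$ and the possible splitting of the curve at exceptional values of $\tau$ (notably $\tau=-2$, which corresponds to the excluded element $-id$). Keeping track of these ramification data while retaining explicit control of all degrees is what allows one to pass from mere existence of $q_0(n)$ to an actual estimate.
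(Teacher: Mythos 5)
Your reduction is essentially the paper's: the identity $\tr(e_k(x,y)y)=\tr(y)$ collapses the trace dynamics onto the one-parameter quadratic family (your $\phi_r$ is exactly the first coordinate of the paper's $\mu$, after matching notation), and the theorem becomes a point count on the fibre of the iterate over $\BF_q$. The packaging differs in minor ways: you count on the surface $S_\tau\subset\BA^3$ via Lang--Weil and impose $\tr[x,y]\ne 2$ in order to lift trace triples to matrix pairs, whereas the paper projects to the plane curve $\{\rho_n(s,t)=a\}\subset\BA^2_{s,t}$, uses \thmref{thm.main} (which lifts \emph{every} $\BF_q$-point of $\BA^3$, so no non-degeneracy condition is needed), and controls the image of the first step separately in \propref{image} and \propref{if}. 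These differences are cosmetic.

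The genuine gap is the step you yourself call ``the main obstacle'': you assert, but do not prove, that the iterated curve $\{\phi_r^{(n-1)}(w)=\tau\}$ is geometrically irreducible, gesturing at critical and periodic orbits of $\phi_r$. Without this (or a substitute) the Weil/Lang--Weil bounds give nothing: a priori every geometric component could be defined only over a proper extension of $\BF_q$ and carry no rational points at all. The paper supplies the missing argument concretely and, in fact, needs less than irreducibility: after the substitution $z=s-2$, $\k=t^2-4$ the fibre is presented as the explicit tower of double covers \eqref{s1}, whose projective closure $X\subset\BP^{n+2}$ meets the hyperplane at infinity in $2^n$ \emph{smooth} $\BF_q$-rational points (\lemref{infty}); hence every geometric component is Galois-stable, i.e.\ defined over $\BF_q$, and the Aubry--Perret form of the Weil inequality for a possibly singular irreducible plane curve of degree $\le 3^{2^n}$ already proves \thmref{qbig}. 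Geometric irreducibility itself is only established later (\propref{genus}), by showing each double cover in the tower is ramified, and it is used only for the sharper genus bound $g\le 2^n(n-1)+1$; note it requires $a\ne 0$, i.e.\ $\tau\ne 2$, precisely the value you exclude. A second, smaller omission: for $\tau=-2$ the non-central elements of $\SL(2,q)$ split into \emph{two} conjugacy classes, so producing one pair with $\tr e_n(x,y)=-2$ and $e_n(x,y)\ne -id$ does not finish that case; the paper reaches the second class by conjugating the pair by $\operatorname{diag}(\a,\a^{-1})$ with $\a^2\in\BF_q$, $\a\notin\BF_q$ (\propref{if}, Case 3). You would need to supply both arguments to turn your plan into a proof.
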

We moreover give an estimate for $q_0(n)$ which, unfortunately, is
exponential in $n$ (see \corref{less}).

\vskip 0.2 cm

\thmref{thm.engel} certainly fails to hold for all groups
$\SL(2,q)$. Indeed, we give examples for integers $n \geq 3$ and
finite fields $\BF_q$ for which the $n$-th Engel word map is
\emph{not} almost surjective for $\SL(2,q)$ (see Example
\ref{ex.no.sol}). We moreover show that there is an infinite family
of finite fields $\BF_q$, such that if $n\geq n_0(q)$ is large
enough, then the $n$-th Engel word map in not almost surjective on
$\SL(2,q)$ (see \propref{sne1}).

\vskip 0.2 cm

Considering the group $\PSL(2,q)$, we see that \thmref{thm.engel}
immediately implies that the $n$-th Engel word map is surjective for
the group $\PSL(2,q)$ provided that $q\geq q_0(n)$. Thus, when $n$
is small, one can verify by computer that the $n$-th Engel word map
is surjective for the remaining groups $\PSL(2,q)$ with $q<q_0(n)$,
hence for all the groups $\PSL(2,q)$.

\begin{corrS}\label{cor.short}
Let $n \leq 4$ then the $n$-th Engel word map is surjective for all
groups $\PSL(2,q)$.
\end{corrS}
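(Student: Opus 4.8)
The plan is to combine the asymptotic result of \thmref{thm.engel} with a finite computer verification. By \thmref{thm.engel}, for each fixed $n \leq 4$ there is a bound $q_0(n)$ such that the $n$-th Engel word map is almost surjective for $\SL(2,q)$ whenever $q \geq q_0(n)$. Since surjectivity of a word map on $\SL(2,q)$ (even only onto $\SL(2,q) \setminus \{-id\}$) descends to surjectivity on the quotient $\PSL(2,q) = \SL(2,q)/\{\pm id\}$ --- the missing value $-id$ maps to the identity in $\PSL(2,q)$, which is always a value of $e_n$ since $e_n(id,id) = id$ --- almost surjectivity on $\SL(2,q)$ already gives full surjectivity on $\PSL(2,q)$ for all $q \geq q_0(n)$. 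So the only remaining cases are the finitely many $q < q_0(n)$, for each $n \in \{1,2,3,4\}$.

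The second step is to dispose of these finitely many exceptional groups by direct computation. For each $n \leq 4$ and each prime power $q < q_0(n)$, I would use \textsc{Magma} (as indicated in the paper's use of O'Brien's computations) to verify that $e_n \colon \PSL(2,q) \times \PSL(2,q) \to \PSL(2,q)$ is surjective. In practice one does not check all $|\PSL(2,q)|^2$ pairs: it suffices to check, for each conjugacy class representative $g$ of $\PSL(2,q)$, that $g$ lies in the image, since the image $e_n(\PSL(2,q))$ is invariant under conjugation. Because the number of conjugacy classes of $\PSL(2,q)$ grows only linearly in $q$, this is a feasible finite search for every $q$ below the bound.

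The main obstacle is that the estimate for $q_0(n)$ coming from \corref{less} is exponential in $n$, so even for $n = 4$ the bound $q_0(4)$ may be large enough that the number of groups $\PSL(2,q)$ with $q < q_0(4)$ --- and the sizes of the largest among them --- push the computer verification to the edge of practicality. The delicate point is therefore to control the computation: one must either tighten the bound $q_0(n)$ for the specific small values $n \leq 4$ so that fewer and smaller groups remain, or organize the \textsc{Magma} search efficiently (iterating over conjugacy classes and using random sampling with a fallback to exhaustive search) so that every $q < q_0(n)$ can actually be certified. Once both the asymptotic range $q \geq q_0(n)$ and the finite range $q < q_0(n)$ are covered, surjectivity holds for all $\PSL(2,q)$, completing the proof.
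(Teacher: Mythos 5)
Your overall strategy --- \thmref{thm.engel} for $q \geq q_0(n)$, descent of almost surjectivity on $\SL(2,q)$ to surjectivity on $\PSL(2,q)$, and a finite computer check below the bound --- is exactly the paper's, and the descent argument is correct. The genuine gap is that you never obtain an explicit, usable value of $q_0(n)$ for $n \leq 4$, and without one the ``finite'' verification is not a well-defined computation. \thmref{thm.engel} is only an existence statement; the explicit estimate, \corref{less}, applies to $e_{n+1}$ only for $n>2$, so it covers $e_4$ (giving roughly $q > 2^{9}\cdot 4 = 2048$) but yields nothing for $e_2$ and $e_3$. You flag this as ``the delicate point'' and offer to either tighten the bound or organize the search, but resolving it is the actual content of the paper's proof of \corref{cor.short}: for each $n \leq 4$ the authors bound the genus of the specific fiber curve $\{\rho_{n-1}(s,t)=a\}$ (genus $1$ for $e_2$, $5$ for $e_3$, $17$ for $e_4$, via \propref{genus}) together with the number of punctures, and apply the Weil inequality to get the concrete thresholds $q>7$, $q\geq 137$, and $q \leq 1240$ respectively, with $e_1$ handled entirely theoretically.

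A secondary point: the residual computation is organized differently, and much more cheaply, in the paper. By \corref{plmin} it suffices to check, for each $a\in\BF_q$, that $a$ or $-a$ lies in the image $T_{n-1}(\BF_q)$ of the trace map $\rho_{n-1}$ on $\BA^2(\BF_q)$ --- an enumeration of $q^2$ points --- rather than to certify surjectivity at the group level. Your conjugacy-class scheme is sound in principle, but at $q\approx 1240$ the group has order about $10^9$, and producing a preimage for every class (in particular any class that random sampling misses) is substantially harder than the trace-level check; the reduction to $\rho_{n-1}$ is what makes the $e_4$ case actually feasible.
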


We have moreover shown that there are certain infinite families of
finite fields $\BF_q$ for which the $n$-th Engel word map in
$\PSL(2,q)$ is always surjective for every $n \in \BN$. The first
family consists of all finite fields of characteristic $2$ (see
\propref{prop.even}), and the second family contains infinitely many
finite fields of odd characteristic (see \propref{prop.sqrt}).
Following \conref{shalev} we believe that the surjectivity should in
fact hold for all groups $\PSL(2,q)$.


\subsection{Equidistribution and measure preservation}

Another interesting question is the \emph{distribution} of a word map.
For a word $w=w(x_1,\dots,x_d) \in F_d$, a finite group $G$ and some $g \in G$, we define
\[
    N_w(g) = \{(g_1,\dots,g_d)\in G^d: w(g_1,\dots,g_d)=g\}.
\]
It is therefore interesting to estimate the size of $N_w(g)$, and especially to see whether
$w$ is \emph{almost equidistributed}, namely if $|N_w(g)| \approx |G|^{d-1}$ for almost all $g \in G$.
More precisely, we define:

\begin{defn}\label{def.equi}
A word map $w:G^d \rightarrow G$ is \emph{almost equidistributed}
for a family of finite groups $\mathcal{G}$ if any group $G \in
\mathcal{G}$ contains a subset $S= S_G \subseteq G$ with the
following properties:
\begin{enumerate}\renewcommand{\theenumi}{\it \roman{enumi}}
\item $|S| = |G|(1-o(1))$;
\item $|N_w(g)| = |G|^{d-1}(1+o(1))$ uniformly for all $g \in S$;
\end{enumerate}
where $o(1)$ denotes a real number depending only on $G$ which tends to zero as $|G| \rightarrow \infty$.
\end{defn}

An important consequence (see \cite[\S 3]{GS}) is that any ``almost
equidistributed'' word map is also ``almost measure preserving'',
that is:

\begin{defn}\label{def.measure}
A word map $w:G^d \rightarrow G$ is \emph{almost measure preserving}
for a family of finite groups $\mathcal{G}$ if every group $G
\in \mathcal{G}$ satisfies the following:
\begin{enumerate}\renewcommand{\theenumi}{\it \roman{enumi}}
\item For every subset $Y \subseteq G$ we have
$$ |w^{-1}(Y)|/|G|^d = |Y|/|G|+ o(1); $$
\item For every subset $X \subseteq G^d$ we have
$$ |w(X)|/|G| \geq |X|/|G|^d - o(1); $$
\item In particular, if $X \subseteq G^d$ and $|X|/|G|^d = 1-o(1)$,
then almost every element $g \in G$ can be written as $g=w(g_1,\dots,g_d)$ where $g_1,\dots,g_d \in X$;
\end{enumerate}
where $o(1)$ denotes a real number depending only on $G$ which tends to zero as $|G| \rightarrow \infty$.
\end{defn}

The following question was raised by Shalev.

\begin{quest}[Shalev]\cite[Problem 2.10]{Sh07}.
Which words $w$ induce almost measure preserving word maps $w: G^d \rightarrow G$
on finite simple groups $G$?
\end{quest}

It was proved in \cite{GS} that the commutator word $w=[x,y] \in
F_2$ as well as the words $w =[x_1,\dots,x_d] \in F_d$, $d$-fold
commutators in any arrangement of brackets, are almost
equidistributed, and hence also almost measure preserving, for the
family of finite simple non-abelian groups.

A natural question, suggested by Shalev, is whether this remains
true also for the Engel words. We prove that this is indeed true for
the family of groups $\PSL(2,q)$, where $q$ is odd.

\begin{theoremS}\label{thm.equi}
Let $n \in \BN$, then the $n$-th Engel word map is almost
equidistributed, and hence also almost measure preserving, for the
family of groups $\{\PSL(2,q):\ q \ is \ odd\}$.
\end{theoremS}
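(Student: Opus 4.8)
My plan is to move the problem to $\SL(2,q)$, where the trace map is available, count the fibres there, and transfer back to $\PSL(2,q)$. Throughout $d=2$, so the target fibre size is $|G|^{d-1}=|G|$, and all $o(1)$'s below are allowed to depend on $n$ since $n$ is fixed and $q\to\infty$.

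\emph{Step 1: descent to $\SL(2,q)$.} Because $e_n$ is an iterated commutator, $e_n(\pm x,\pm y)=e_n(x,y)$, so $e_n$ factors through the covering $\SL(2,q)^2\to\PSL(2,q)^2$ and defines a map $E_n:\PSL(2,q)^2\to\SL(2,q)$ with $\bar e_n=\pi\circ E_n$, where $\pi:\SL(2,q)\to\PSL(2,q)$. As the covering is $4$-to-$1$ and $q$ is odd (so $g\neq-g$), for any lift $g$ of $\bar g$ I get
\[
 N_{\bar e_n}(\bar g)=\tfrac14\bigl(N^{\SL}_{e_n}(g)+N^{\SL}_{e_n}(-g)\bigr).
\]
It therefore suffices to prove $N^{\SL}_{e_n}(h)=|\SL(2,q)|(1+o(1))$ for $h\in\{g,-g\}$, uniformly for $\bar g$ in a set $S$ of density $1-o(1)$.

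\emph{Step 2: conjugacy reduction and the trace partition.} I restrict to regular semisimple targets $\tau:=\tr h\neq\pm2$; then $\{h':\tr h'=\tau\}$ is a single conjugacy class $C_\tau$ of size $\approx q^2$, and conjugation invariance of $e_n$ gives
\[
 N^{\SL}_{e_n}(h)=\frac{\#\{(x,y):\tr e_n(x,y)=\tau\}}{|C_\tau|}.
\]
I then partition by the Fricke coordinates $(\a,\be,\g)=(\tr x,\tr y,\tr xy)$. For a triple in the irreducible locus the number of pairs with those traces equals $|\PGL(2,q)|=|\SL(2,q)|=q^3-q$ (Macbeath/Fricke; \cite{BGK}). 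A pair is reducible iff $\tr[x,y]=2$; since (Step 3) $2$ is a fixed point of the governing iteration, every reducible pair has $\tr e_n=2$, so for $\tau\neq2$ \emph{no} reducible pair contributes and the fibre weight is constant. Hence, with $P_n(\a,\be,\g)$ the polynomial expressing $\tr e_n(x,y)$ and $V_\tau:=\{P_n=\tau\}\subset\BA^3$,
\[
 \#\{(x,y):\tr e_n(x,y)=\tau\}=(q^3-q)\,|V_\tau(\BF_q)|\qquad(\tau\neq\pm2).
\]

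\emph{Step 3: the trace map and Lang--Weil.} The key simplification is that $e_k y=e_{k-1}\,y\,e_{k-1}^{-1}$, whence $\tr(e_k y)=\tr y=:t$ for every $k\ge1$; this frozen cross-trace is the trace-level shadow of the non-dominance of $(x,y)\mapsto(e_n,y)$. Feeding it into the Fricke identity $\tr[A,B]=(\tr A)^2+(\tr B)^2+(\tr AB)^2-\tr A\,\tr B\,\tr AB-2$ collapses the dynamics to the one-parameter quadratic map $f_t(s)=s^2-t^2s+2t^2-2$, so that
\[
 \tr e_n(x,y)=f_t^{\,n-1}(s_1),\qquad s_1=\a^2+t^2+\g^2-\a t\g-2 .
\]
One checks $f_t(2)=2$, which justifies the reducibility claim of Step 2. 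If $V_\tau$ is geometrically irreducible of dimension $2$, then Lang--Weil gives $|V_\tau(\BF_q)|=q^2+O_n(q^{3/2})$; substituting into the displays of Step 2 yields $N^{\SL}_{e_n}(h)=q^3(1+o(1))=|\SL(2,q)|(1+o(1))$, and hence $N_{\bar e_n}(\bar g)=|\PSL(2,q)|(1+o(1))$, which is the required fibre estimate.

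\emph{Step 4: the good set, and the main obstacle.} I take $S$ to consist of the $\bar g$ whose trace $\tau$ satisfies $\tau,-\tau\neq\pm2$ and for which both $V_{\pm\tau}$ are geometrically irreducible of dimension $2$. The crux — and the step I expect to be hardest — is to show that irreducibility fails only for $\tau$ in a set of size $o(q)$, so that the excluded elements number $o(q^3)=o(|\PSL(2,q)|)$ and $|S|=|\PSL(2,q)|(1-o(1))$. This is exactly the geometric content furnished by the trace-map method of \cite{BGK} that already underlies Theorem~A: one must locate the $\tau$ for which the fibre $P_n^{-1}(\tau)$ drops dimension or splits, which is controlled by the critical values and periodic points of the family $f_t$ and by possible factorizations of the iterate $f_t^{\,n-1}$, and verify that these $\tau$ are $o(q)$ in number. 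Since $n$ is fixed, the degree $O(2^n)$ of $P_n$ only affects the implied constants, so the Lang--Weil error stays $o(q^2)$. Finally, ``almost equidistributed $\Rightarrow$ almost measure preserving'' is the cited implication \cite[\S3]{GS}, which completes the proof.
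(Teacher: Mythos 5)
Your overall route is the paper's (Section~\ref{sect.equi}): pass to $\SL(2,q)$, restrict to traces $\tau\ne\pm2$ so the target trace-fibre is one conjugacy class, convert $N_{e_n}(h)$ into a point count on the trace variety $\{P_n=\tau\}$ via the collapse $\tr(e_ky)=\tr(y)$ and the one-parameter quadratic $\mu$, apply a Weil-type bound, and descend to $\PSL(2,q)$ by $N_{\bar e_n}(\bar g)=\tfrac14\bigl(N_{e_n}(g)+N_{e_n}(-g)\bigr)$. One step is genuinely different and arguably cleaner: you count the $\pi$-fibres exactly by Macbeath/Fricke ($q^3-q$ over each trace triple with $\tr[x,y]\ne 2$, the reducible locus being invisible for $\tau\ne 2$ since $2$ is a fixed point of $\mu$), where the paper's \lemref{schet} grinds out the composite fibre as a product of conics and only gets $q^4(1+O(1/q))$; correspondingly you run Lang--Weil on the surface $V_\tau\subset\BA^3$ while the paper first projects to the plane $\{u=t\}$ and applies the Weil bound to the curve $\rho_n^{-1}(\tau)$ with an explicit genus bound.

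The genuine gap is the one you flag yourself in Step 4: the geometric irreducibility of $V_\tau$ (equivalently, of the fibres of the iterate $f_t^{\,n-1}\circ p$) for all but $o(q)$ values of $\tau$ is asserted, not proved, and it does not follow from soft general principles. By Stein factorization a dominant $P_n:\BA^3\to\BA^1$ may have \emph{every} fibre geometrically reducible (e.g.\ if $P_n$ factored through a squaring map --- a scenario not obviously excluded for an $n$-fold iterate of a quadratic), and if the components were Galois-conjugate over $\BF_q$ the count $|V_\tau(\BF_q)|$ would fall far below $q^2$, destroying the lower bound in condition \emph{(ii)}. Nor can you simply defer to Theorem~\ref{thm.engel}: almost surjectivity needs only one rational point per fibre, equidistribution needs the full count $q^2+O_n(q^{3/2})$, and both rest on the same geometric input. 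The paper supplies it in \propref{genus}: rewriting the fibre as the tower of double covers \eqref{c1}, each cover is shown to be ramified and smooth at its $2^n$ points at infinity (\lemref{infty}), forcing irreducibility for every $\tau\ne 2$ together with the genus bound $2^n(n-1)+1$. Your argument is complete once that proposition (or an equivalent irreducibility statement) is proved or invoked; without it, Step~4 is a hole rather than a reduction.
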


Since it is well-known that almost all pairs of elements in $\PSL(2,q)$ are
generating pairs (see \cite{KL}),
we deduce that for any $n \in \BN$, the probability that a randomly chosen element $g \in \PSL(2,q)$,
where $q$ is odd, can be written as an Engel word $e_n(x,y)$ where $x,y$ generate $\PSL(2,q)$,
tends to $1$ as $q \rightarrow \infty$.

It was proved in \cite{MW} that when $q \geq 13$ is odd, every
nontrivial element of $\PSL(2,q)$ is a commutator of a generating
pair. One can therefore ask if a similar result also holds for the
Engel words.

\subsection {Notation and layout}

Throughout the paper we use the following notation:

\begin{itemize}
\item $G=\PSL(2,q);$
\item $\tG=\SL(2,q);$
\item $\ov{\BF}_q$ -- the algebraic closure of the finite field
$\BF_q$;
\item $|M|$ -- number of points in a set $M;$
\item $\BA_{x_1,...,x_k}^k$ -- $k$-dimensional affine space with coordinates $x_1,...,x_k;$
\item  $p(s,u,t)=s^2+t^2+u^2-sut-2;$
\item $d(X)$ -- degree of a projective set $X;$
\item $g(X)$ --  geometric genus of a projective curve $X;$
\item $f^{(n)}$ stands for $n^{th}$ iteration of a morphism $f.$
\end{itemize}

Some words on the layout of this paper. In \secref{sect.trace} we recall the general method developed in
\cite{BGK} for investigating verbal systems in the group $\SL(2,q)$.
We apply this method to Engel words in \secref{sect.tr.engel}.
In \secref{special.fields} we discuss the surjectivity (and non-surjectivity) of Engel words
in the groups $\SL(2,q)$ and $\PSL(2,q)$ for certain families of finite fields.
The proof of our main theorem, \thmref{thm.engel}, appears in \secref{main}.
In \secref{small.n} we check the surjectivity of short Engel words for all groups $\PSL(2,q)$ and prove
\corref{cor.short}.
The proof of the equidistribution theorem, \thmref{thm.equi}, appears in \secref{sect.equi}.
In \secref{last} we discuss further questions and conjectures.

\section{The trace map}\label{sect.trace}

The main idea is to use the method which was introduced in~\cite{BGK} to investigate
verbal dynamical systems.
This method is based on the following classical Theorem (see, for example, \cite{Vo,Fr,FK} or
\cite{Ma, Go} for a more modern exposition).

\begin{theorem}[Trace map]\label{trace}
Let $F=\left<x,y\right>$ denote the free group on two generators.
Let us embed $F$ into $\SL(2,\BZ)$ and denote by $\tr$ the trace
character. If $w$ is an arbitrary element of $F$, then the character
of $w$ can be expressed as a polynomial
$$
\tr(w)=P(s,u,t)
$$
with integer coefficients in the three characters $s=\tr(x),
u=\tr(xy)$ and $t=\tr(y)$.
\end{theorem}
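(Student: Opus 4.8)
The plan is to prove the Trace Map theorem by induction on the word length of $w \in F = \langle x, y \rangle$, using the fundamental trace identities that hold in $\SL(2,\BZ)$ (indeed in $\SL(2,R)$ for any commutative ring $R$). First I would establish the two basic tools coming from the Cayley--Hamilton theorem: for any $A \in \SL(2,\BZ)$ one has $A^2 - \tr(A)\,A + I = 0$, whence $A + A^{-1} = \tr(A)\,I$ and in particular $\tr(A^{-1}) = \tr(A)$; and for any two matrices $A, B \in \SL(2,\BZ)$ the multiplicative identity
$$
\tr(AB) + \tr(AB^{-1}) = \tr(A)\,\tr(B),
$$
which follows by multiplying $A$ against $B + B^{-1} = \tr(B)\,I$ and taking traces. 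These two identities let me reduce the trace of any product to traces of shorter words.

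Next I would set $s = \tr(x)$, $t = \tr(y)$, $u = \tr(xy)$ and show by induction that the trace of every word lies in $\BZ[s,t,u]$. The key reduction step is this: given a word $w$, I write it (after cyclic reduction, which does not change the trace since $\tr$ is a class function) as a product in which I can peel off a leading syllable and apply the identity above to express $\tr(w)$ as a $\BZ$-linear combination of traces of strictly shorter words, together with the trace of a ``complementary'' word $\tr(AB^{-1})$. One organizes the induction on, say, the total exponent-sum length of $w$; each application of $\tr(AB)+\tr(AB^{-1})=\tr(A)\tr(B)$ replaces a product by sums of traces of factors of smaller length, and $\tr(A^{-1})=\tr(A)$ removes negative exponents. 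The base cases $\tr(x)=s$, $\tr(y)=t$, $\tr(xy)=u$, together with the derived value $\tr(x^{-1}y) = \tr(x)\tr(y) - \tr(xy) = st-u$, anchor the recursion; every other trace is then produced as an integer polynomial in $s,t,u$ by repeated substitution.

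The step I expect to be the main obstacle is organizing the induction so that it genuinely terminates: a naive peeling can produce words that are no shorter, because the identity trades $\tr(AB)$ for $\tr(A)\tr(B) - \tr(AB^{-1})$ and $\tr(AB^{-1})$ need not be shorter than $\tr(AB)$. The resolution is to fix a good complexity measure --- for instance, induct first on the number of distinct syllables of $x$ and $y$ appearing, and within that reduce exponents down to $\pm 1$ using $A^{n} = \tr(A^{n-1})A - A^{n-2}$ (equivalently the Chebyshev-like recursion $\tr(A^{n}) = s_A\,\tr(A^{n-1}) - \tr(A^{n-2})$) --- and to choose at each stage the syllable whose removal strictly decreases this measure. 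Once the well-foundedness of the chosen ordering is verified, closure under the ring operations is automatic, and the conclusion $\tr(w) = P(s,u,t) \in \BZ[s,u,t]$ follows. I would remark that uniqueness of $P$ is not asserted in the statement, so I need only produce such a polynomial, not canonicalize it; this considerably simplifies the bookkeeping.
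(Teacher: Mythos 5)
The paper does not actually prove this statement: it is quoted as a classical theorem with references to Vogt, Fricke, Fricke--Klein, Magnus and Goldman, so there is no internal proof to compare against. Your sketch reconstructs the standard argument from those references. The two identities you base it on are the right ones, namely $A+A^{-1}=\tr(A)\,I$ (hence $\tr(A^{-1})=\tr(A)$) and $\tr(AB)+\tr(AB^{-1})=\tr(A)\tr(B)$, and you correctly identify the only delicate point: naive peeling does not obviously terminate, because $\tr(AB^{-1})$ need not be shorter than $\tr(AB)$. The usual way to make your termination argument precise is the one you gesture at: first eliminate all exponents other than $\pm1$ by writing $x^{\pm m}$ as a $\BZ[s]$-combination of $x$ and $I$ (and similarly for $y$), so that $\tr(w)$ becomes a $\BZ[s,t]$-linear combination of traces of alternating words $xyxy\cdots$; then induct on syllable length, using cyclic invariance of the trace to see that the complementary word $AB^{-1}$ produced by the product identity cyclically reduces to one of strictly smaller syllable length. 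One genuine slip: the matrix identity you quote, $A^{n}=\tr(A^{n-1})A-A^{n-2}$, is false for $n\geq 3$ (test $n=3$ on a diagonal matrix); the correct forms are $A^{n}=\tr(A)\,A^{n-1}-A^{n-2}$, or more generally $A^{m+n}=\tr(A^{n})A^{m}-A^{m-n}$, obtained by multiplying $A^{n}+A^{-n}=\tr(A^{n})\,I$ by $A^{m}$. Your parenthetical trace-level recursion $\tr(A^{n})=\tr(A)\tr(A^{n-1})-\tr(A^{n-2})$ is the correct consequence, so the error does not derail the overall argument, but the exponent-reduction step needs the corrected matrix identity, not the one you wrote.
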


Note that the same remains true for the group $\tG =\SL(2,q)$. The general case, $\SL(2,R)$,
where $R$ is a commutative ring, can be found in \cite{CMS}.

\vskip 0.3 cm

The construction used below is described in detail in \cite{BGK}.
In this construction, $\SL(2,\ov{\BF}_q)$ is considered as an affine variety,
which we shall denote by $\tG$ as well, since no confusion may arise.
We will also consider $\SL(2,\BF_q)$ as a special fiber at $q$ of a $\BZ$-scheme $\SL(2,\BZ).$

For any $x, y\in \tG$ denote $s=\tr(x)$, $t=\tr(y)$ and
$u=\tr(xy)$, and define a morphism $\pi\colon \tG\times \tG\to
\BA_{s,u,t}^3$ by
$$\pi(x,y):=(s,u,t).$$

\begin{theorem}\cite[Theorem 3.4]{BGK}.\label{thm.main}
For every $\mathbb{F}_q$-rational point $Q=(s_0,u_0,t_0) \in
\mathbb{A}^3_{s,u,t}$, the fiber $H=\pi^{-1}(Q)$ has an
$\mathbb{F}_q$-rational point.
\end{theorem}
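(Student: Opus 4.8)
The plan is to prove the statement constructively: for an arbitrary $Q=(s_0,u_0,t_0)$ with entries in $\BF_q$, I would exhibit an explicit pair $(x,y)\in \tG\times\tG$ of $\BF_q$-matrices realizing the three prescribed traces. The key device is to fix $x$ once and for all as the companion-type matrix
$$
x=\begin{pmatrix} s_0 & -1\\ 1 & 0\end{pmatrix},
$$
which lies in $\SL(2,\BF_q)$ and has $\tr(x)=s_0$ regardless of the value of $s_0$. With $x$ fixed, the two remaining conditions $\tr(y)=t_0$ and $\tr(xy)=u_0$ are \emph{linear} in the entries of $y$, so the whole problem collapses onto the single quadratic condition $\det y=1$.

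Carrying this out, write $y=\left(\begin{smallmatrix} a & b\\ c& d\end{smallmatrix}\right)$. The trace conditions force $d=t_0-a$ and $c=s_0 a+b-u_0$, after which $\det y=1$ becomes the plane conic
$$
C\colon\quad a^2+s_0\,ab+b^2-t_0\,a-u_0\,b+1=0
$$
in the variables $(a,b)\in\BA^2_{a,b}$. Any $\BF_q$-rational point of $C$ produces, through these substitutions, a matrix $y\in\SL(2,\BF_q)$ with $\tr(y)=t_0$ and $\tr(xy)=u_0$; hence $(x,y)$ is an $\BF_q$-rational point of the fiber $H=\pi^{-1}(Q)$. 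The theorem is therefore equivalent to the assertion that the affine conic $C$ always carries an $\BF_q$-rational point.

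For that final step I would use the fact that finite fields are $C_1$: by Chevalley--Warning the projective closure $\overline{C}\subset\BP^2$ always has an $\BF_q$-rational point. When $\overline{C}$ is smooth it is isomorphic to $\BP^1$ and so has exactly $q+1$ rational points, of which at most two lie on the line at infinity $Z=0$ (these being the projective roots of the binary form $A^2+s_0 AB+B^2$); hence at least $q-1\geq 1$ rational points are affine, giving a point of $C$. If instead $\overline{C}$ is degenerate, i.e.\ a union of two lines, then either the lines are $\BF_q$-rational, in which case affine points are read off directly, or they are a Galois-conjugate pair over $\BF_{q^2}$, in which case their unique common point is Frobenius-fixed, hence $\BF_q$-rational, and it is affine precisely because $s_0^2-4$ is then a non-square so there are no rational points at infinity.

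I expect the one genuine obstacle to be exactly this bookkeeping at infinity and in the degenerate cases: Chevalley--Warning by itself yields only a projective zero, so one must rule out the pathology in which every rational point of $\overline{C}$ lies on the line at infinity. Disposing of the finitely many shapes of a singular conic (and, if one wants complete rigor, checking the very small fields $q=2,3$ directly) is routine, but it is the single place where the argument is not a one-line invocation of a general theorem.
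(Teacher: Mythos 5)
The paper does not prove this statement at all --- it is quoted verbatim from \cite[Theorem 3.4]{BGK} --- so your proposal is measured against correctness alone, and it has a genuine gap in the degenerate-conic case. The computation up to the conic $C\colon a^2+s_0ab+b^2-t_0a-u_0b+1=0$ is correct, but the final claim, that when $\overline{C}$ is a pair of Galois-conjugate lines their common point is necessarily affine ``because $s_0^2-4$ is then a non-square,'' is false. Take $q=5$ and $Q=(2,1,1)$. Your conic becomes $(a+b)^2-(a+b)+1=0$, i.e.\ $w^2-w+1=0$ with $w=a+b$, whose discriminant $-3=2$ is a non-square mod $5$: the affine conic has \emph{no} $\BF_5$-points. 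Projectively, $\overline{C}=\{(A+B)^2-(A+B)Z+Z^2=0\}$ is a pair of conjugate lines over $\BF_{25}$ whose unique rational point is $(1:-1:0)$, which lies \emph{at infinity} (note $s_0^2-4=0$ is a square here, contradicting your dichotomy). So with your fixed choice $x=\left(\begin{smallmatrix}2&-1\\1&0\end{smallmatrix}\right)$ there is no admissible $y$, even though the fiber $\pi^{-1}(2,1,1)$ is certainly nonempty: taking $x=id$ gives $\tr(xy)=\tr(y)$, so any $y$ of trace $1$ works. The root of the problem is that when $s_0=\pm 2$ the trace does not determine the conjugacy class of $x$, and the companion matrix (a regular unipotent up to sign) is the wrong representative precisely when $u_0=\pm t_0$ and $t_0^2-4$ is a non-square.

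The argument is salvageable, but you must let $x$ vary over the conjugacy classes of trace $s_0$ rather than fixing it once and for all. Concretely: if $s_0\neq\pm 2$ (or $q$ is even and $s_0 \neq 0$) your companion matrix is fine and the smooth/degenerate analysis goes through as you describe, since the singular point of $\overline{C}$ can lie at infinity only when $s_0^2=4$. If $s_0=2$ the only failure is $u_0=t_0$ with $t_0^2-4$ a non-square, which is handled by $x=id$; if $s_0=-2$ the only failure is $u_0=-t_0$ with $t_0^2-4$ a non-square, handled by $x=-id$. (Symmetrically one could instead fix $y$ and solve for $x$ when $t_0\neq\pm2$.) With that case split added --- and the small-field and characteristic-$2$ checks you already flagged --- the proof closes.
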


Let $\om(x,y)$ be a word in two variables and let
$\tilde\varphi: \tG \times \tG \rightarrow \tG$
be a morphism, defined as $\tilde\varphi(x,y)=\om(x,y).$

The \emph{Trace map Theorem} implies that there exists a morphism $\psi \colon
\BA_{s,u,t}^3\to \BA_{s,u,t}^3$ such that
\begin{equation}\label{cc1}
\psi(\pi(x,y))=\pi(\tilde\vp(x,y),y).
\end{equation}

This map is called the ``trace map'', and it satisfies
\begin{equation}\label{cc2}
\psi(s,u,t):= \bigl(f_1(s,u,t),f_2(s,u,t),t \bigr),
\end{equation}
where $f_1(s,u,t) = \tr(\tilde\vp(x,y))$ and $f_2(s,u,t) = \tr(\tilde\vp(x,y)y)$.

Define $\vp =(\tilde\vp, id)\colon \tG\times \tG\to \tG\times \tG$
by $\vp(x,y)=(\tilde\vp(x,y),y)$. Then, according to \eqref{cc1} and \eqref{cc2},
the following diagram commutes:
\begin{equation}
\begin{CD}
\tG \times \tG  @>{\vp}>> \tG \times \tG   \\
@V\pi VV   @V\pi VV\\
\BA^3_{s,u,t} @>{\psi}>> \BA^3_{s,u,t} \label{d4}
\end{CD}
\end{equation}

Therefore, the main idea is to study the properties of the morphism $\psi$ instead of
the corresponding word map $\om$.

As will be shown later, the morphism $\psi$ corresponding to Engel
words is much simpler. Moreover, it follows from \thmref{thm.main}
that the  surjectivity of $\psi$ implies the surjectivity of $\vp$
(see \propref{if}).


\section{Trace maps of Engel words}\label{sect.tr.engel}

Let $e_n = e_n(x,y): \tG \times \tG \rightarrow \tG$
be the $n$-th Engel word map, and let $s_n=\tr(e_n(x,y)).$

Then
\beq\label{t2}
s_1=\tr (e_1(x,y))=\tr ([x,y])=s^2+t^2+u^2-ust-2=p(s,u,t).
\ef

Moreover, for $n \geq 1$, \beq\label{t1}
  \tr(e_{n}(x,y)y) = \tr(e_{n-1}ye_{n-1}^{-1}y^{-1}y) = \tr(e_{n-1}ye_{n-1}^{-1}) = \tr(y) = t.
\ef

Therefore, for $n \geq 1$,
\beq\label{t3}
s_{n+1}=\tr(e_{n+1})=p(s_n,t,t)=s_n^2-s_nt^2+2t^2  - 2.
\ef

In the notation of diagram \eqref{d4} we have
\beq\label{psi}
 \psi(s,u,t) =(p(s,u,t),t,t).
\ef

This yields a corresponding map
$\psi_{n+1}: \mathbb{A}^3_{s,u,t} \rightarrow \mathbb{A}^3_{s,u,t}$,
which satisfies
\beq\label{t4}
   \psi_{n+1}(s,u,t) = \psi^{(n+1)}(s,u,t) = \psi(s_n,u,t) = (p(s_n,t,t),t,t) = (s_{n+1},t,t).
\ef

\begin{remark}\label{-id}
If $n\geq 1$ and $\tr(y) \ne 0$ then $e_n(x,y)\ne -id$, since $\tr((
-id)y)=-\tr(y) \ne \tr(y)$ in contradiction to \eqref{t1}.
\end{remark}

Define $H=\{(x,y)\in \tG\times \tG | \tr(xy)=\tr(y)\}$ and
$A = \{(s,u,t) \in \mathbb{A}^3_{s,u,t} | \ u=t\} \cong \mathbb{A}^2_{s,t}$,
then $\pi(H) \subseteq A$.
Equation \eqref{t4} now shows that in order to find the image of
$\psi_n: \mathbb{A}^3_{s,u,t} \rightarrow \mathbb{A}^3_{s,u,t}$,
one may consider its restriction $\mu^{(n)}: \mathbb{A}^2_{s,t}\rightarrow\mathbb{A}^2_{s,t}$,
where $\mu(s,t)=(s^2-st^2+2t^2 - 2,t)$.

\begin{defn}\label{morphisms}
Let us introduce the following morphisms:

\begin{itemize}
\item $\vp_n:\tG\times \tG \rightarrow \tG\times \tG, \ \
\vp_n(x,y) = (e_{n+1}(x,y),y), \ \vp_n(x,y)=\vp_0^{(n+1)}(x,y) ;$

\item $\th: \tG\times \tG \rightarrow \tG, \ \ \th (x,y)=x  ;$

\item $\tau: \tG \to \BA^1_{s}, \ \ \tau (x)=\tr(x)  ;$

\item $\lambda_1: \mathbb{A}^2_{s,t} \to \BA^1_{s}, \ \ \lambda_1 (s,t)=s  ;$

\item $\lambda_2: \mathbb{A}^3_{s,u,t}\to \mathbb{A}^2_{s,t}, \ \ \lambda_2(s,u,t)= (s,t);$

\item $\mu: \mathbb{A}^2_{s,t}\rightarrow\mathbb{A}^2_{s,t}, \ \
         \mu(s,t)=( s^2-st^2+2t^2 - 2, t) ;$

\item $\mu_n=\mu^{(n)};$

\item $\rho_n:\mathbb{A}^2_{s,t} \rightarrow \mathbb{A}^1_{s}, \ \ \rho_n=\lambda_1\circ\mu_n ;$

\end{itemize}
\end {defn}

These morphisms determine the following commutative diagram:
\beq\label{d1}
\xymatrix{
{\tG \times \tG} \ar[d]^{\pi} \ar[r]^-{\vp_0}
& H \ar[d]^{\pi} \ar[r]^{\vp_0^{(n)}}
& H \ar[d]^{\pi} \ar[r]^{\theta} &
{\tG} \ar[dd]^{\tau} \\
{\BA^3_{s,u,t}} \ar[r]^{\psi}
& A \ar[d]^{\lambda_2} \ar[r]^{\psi^{(n)}}
& A \ar[d]^{\lambda_2} \\
& {\BA^2_{s,t}} \ar[r]^{\mu_n} & {\BA^2_{s,t}} \ar[r]^{\lambda_1} & {\BA^1_{s}}
}
\ef

\begin{remark}
$\th\circ\vp_n(x,y)=e_{n+1}(x,y)$ and $\psi_{n+1}(s,u,t) = \bigl(\rho_n\circ\lambda_2(\psi(s,u,t)),t,t \bigr)$.
\end{remark}

Equation~\eqref{psi} shows that the morphism $\tG^2 \to \tG^2$
defined as $(x,y) \mapsto (e_n(x,y),y)$ is not dominant, since the
trace map $\psi$ of the first Engel word $e_1(x,y)=[x,y]$ maps the
three-dimensional affine space $\BA^3$ into a plane $A= \{u=t\}.$
One can consider the trace maps of the following Engel words
$e_{n+1}$ as the compositions of this map $\psi$ with the
endomorphism $\mu_n$ of $A$.

First, in \propref{image}, we find the image $\psi(\BA^3)\subset A$ and
then in \propref{if} we establish the connection between the image  of $\mu_n$
and the range of the corresponding Engel word $e_{n+1}$.
In the next section we shall study the properties of $\mu_n$.

\begin{prop}\label{image}
The image $\Psi_q=\psi(\BA^3_{s,u,t} (\BF_{q}))$ is equal to:
\begin{enumerate}
\item $A(\BF_{q})$, if $q$ is even;
\item $A(\BF_{q})\setminus Z_q\subset A(\BF_{q})$, if $q$ is odd,
where
$$Z_q=\{(s,t,t)\in A  | \ t^2=4 \text{ and} \ s-2  \ \text{is
not a square in } \BF_{q}\}.$$
\end{enumerate}
\end{prop}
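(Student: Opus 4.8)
The plan is to reduce the statement to a question about the values of a binary quadratic form. Since $\psi(s,u,t)=(p(s,u,t),t,t)$, every image point has equal second and third coordinates, so $\Psi_q\subseteq A(\BF_q)$ is immediate; fixing $t$, a point $(a,t,t)\in A(\BF_q)$ lies in $\Psi_q$ exactly when the equation $p(s,u,t)=a$ is solvable in $s,u\in\BF_q$. Writing $p(s,u,t)=Q_t(s,u)+(t^2-2)$ with $Q_t(s,u)=s^2-sut+u^2$, this amounts to asking whether $a-t^2+2$ is a value of the form $Q_t$. The whole proposition therefore reduces to describing, for each fixed $t$, the value set of $Q_t$ over $\BF_q$ and comparing it with the shift by $t^2-2$.

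First I would settle the even case. In characteristic $2$ the Frobenius $x\mapsto x^2$ is a bijection of $\BF_q$, so already the choice $u=0$ gives $p(s,0,t)=s^2+t^2$, and solving $s^2=a+t^2$ produces a preimage of every $(a,t,t)$. Hence $\Psi_q=A(\BF_q)$, which is part (1).

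For $q$ odd I would complete the square, $Q_t(s,u)=\bigl(s-\tfrac{t}{2}u\bigr)^2+\tfrac{4-t^2}{4}u^2$, so that $Q_t$ is equivalent to the diagonal form $X^2+\tfrac{4-t^2}{4}Y^2$ and its degeneracy is governed by $t^2-4$. When $t^2\ne 4$ the form is nondegenerate; over a finite field of odd order a nondegenerate binary quadratic form $Q$ has $q\pm1$ solutions to $Q=c$ for every $c\ne 0$ (and the origin solves $Q=0$), so it represents every element of $\BF_q$. Thus the value set of $Q_t$ is all of $\BF_q$ and every point $(a,t,t)$ with $t^2\ne 4$ lies in $\Psi_q$. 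When $t^2=4$ the form collapses to the perfect square $\bigl(s-\tfrac{t}{2}u\bigr)^2$, whose value set is precisely the squares of $\BF_q$; as $t^2-2=2$ in this case, $a-t^2+2=a-2$, so $(a,t,t)$ is in $\Psi_q$ if and only if $a-2$ is a square. The points that fail this are exactly those with $t^2=4$ and $a-2$ a nonsquare, which---renaming the first coordinate $a$ as $s$ to match the statement---constitute precisely $Z_q$. This yields part (2).

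The only nontrivial ingredient, and hence the main obstacle, is the representation theorem for binary quadratic forms over $\BF_q$ in the odd case: one must know that even an \emph{anisotropic} binary form (here arising when $t^2-4$ is a nonsquare) still represents every nonzero scalar---equivalently, that it is the norm form of $\BF_{q^2}/\BF_q$ and the norm is surjective onto $\BF_q^{\times}$. It is exactly this surjectivity that confines the exceptional set to the degenerate locus $t=\pm 2$; the remaining steps (completing the square and tracking the shift by $t^2-2$) are routine, and I would verify the borderline arithmetic at $t=\pm 2$ directly to confirm that the square/nonsquare dichotomy matches $Z_q$ on the nose.
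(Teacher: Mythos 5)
Your proposal is correct and follows essentially the same route as the paper: both fix $t$, complete the square in $p(s',u,t)-a$, observe that for $t^2\ne 4$ one is asking whether a nondegenerate binary quadratic form (equivalently, a conic with at most two points at infinity) represents a given scalar over $\BF_q$, and isolate $t^2=4$ as the degenerate locus where the form collapses to a perfect square and the condition ``$s-2$ is a square'' appears. The only cosmetic difference is that the paper counts points on the smooth conic and treats the reducible case $s=t^2-2$ separately, while you absorb the value $0$ into the representation theorem via the origin; these are the same fact in different clothing.
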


\begin{proof}
A point $(s,t,t)\in\Psi_q$ if $C_{s,t}(\BF_{q})\ne\emptyset$, where
$$C_{s,t}=\{(s',u,t)|\ p(s',u,t)=s\}.$$

Now, $$ p(s',u,t)-s=s'^2+u^2+t^2-us't-2-s.$$

{\bf Case 1.} \emph{$q$ is even.} Then the equation
$$ p(s',u,t)-s=s'^2+u^2+t^2-us't-2-s=0$$
has an obvious solution $s'=0, u^2=t^2+s,$ since every number in
$\BF_q$ is a square.

{\bf Case 2.} \emph{$q\ge 3$ is odd.} Then
$$p(s',u,t)-s=s'^2+u^2+t^2-us't-2-s =
\bigl(s'-\fr{ut}{2}\bigr)^2-u^2\bigl(\frac{t^2-4}{4}\bigr)+t^2-2-s.$$
Thus, $C_{s,t}$ for a fixed $t$, is a smooth conic if $t^2-2-s\ne 0$
and $t^2\ne 4,$ with at most two points at infinity. If $t^2-2-s =0$
then $C_{s,t}$ is a union of two lines
$$\bigl\{\bigl(s'-\fr{ut}{2}\bigr)-\frac{u}{2}\sqrt{t^2-4}=0\bigr\}
  \cup \bigl\{\bigl(s'-\fr{ut}{2}\bigr)+\frac{u}{2}\sqrt{t^2-4}=0 \bigr\}$$
which have a point $(s'=0,u=0)$ defined over any field,
provided $t^2-4\ne 0.$

If $t^2-4= 0,$ then the equation $$p(s',u,t)-s=
\bigl(s'-\fr{ut}{2}\bigr)^2+2-s=0$$ has a solution if and only if
$s-2$ is a perfect square.
\end{proof}

\begin{defn}\label{def.sets}
Let us define the following sets:
\begin{itemize}
\item $E_{n+1}=\th\circ\vp_n(\tG\times\tG) =
\{z \in \tG: \text{ there exist } (x,y) \in \tG \times \tG \text{ s.t. } e_{n+1}(x,y)=z \} ;$

\item $Y_q = \lambda_2(\Psi_q) ;$

\item $Y'_q = \lambda_2(\Psi_q) \setminus \{(s,t):t=0\} ;$

\item $T_{n}(\BF_q)=\rho_n(Y_q) ;$

\item $T'_{n}(\BF_q)=\rho_n(Y'_q) .$
\end{itemize}
\end{defn}

\begin{prop}\label{if} $  $

\begin{enumerate}\renewcommand{\theenumi}{\Alph{enumi}}
\item If $q>2$ is even and $a \in \BF_q$, then the following two statements are equivalent:
\begin{enumerate}\renewcommand{\theenumii}{\it \roman{enumii}}
\item 
$a \in T_{n}(\BF_q)= \rho_n(\lambda_2(A(\BF_q))); $
\item 
Any element $z\in \tG$ with $\tr(z)=a$ belongs to $E_{n+1}$.
\end{enumerate}

\item If $q>3$ is odd and $a \in \BF_q$, $a \ne -2$, then the following two statements are equivalent:
\begin{enumerate}\renewcommand{\theenumii}{\it \roman{enumii}}
\item 
$a \in T_{n}(\BF_q)= \rho_n(\lambda_2(\Psi_q)); $
\item 
Any element $z\in \tG$ with $\tr(z)=a$ belongs to $E_{n+1}$.
\end{enumerate}

\item If $q>3$ is odd and $-2\in T'_n(\BF_q)$ then every element $z\in \tG$, $z \ne -id$,
with $\tr(z)=-2$ belongs to $E_{n+1}$.
\end{enumerate}
\end{prop}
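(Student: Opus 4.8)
The plan is to reduce all three parts to two structural properties of the value set $E_{n+1}$ and then to treat separately the traces at which an element of $\tG$ is not determined up to conjugacy by its trace. The first property is that $E_{n+1}$ is invariant under conjugation by $\GL(2,\BF_q)$ (hence by $\PGL(2,\BF_q)$): for $g\in\GL(2,\BF_q)$ one has $g\,e_{n+1}(x,y)\,g^{-1}=e_{n+1}(gxg^{-1},gyg^{-1})$, and such conjugation preserves $\tG$. The second is that $\tau(E_{n+1})=T_n(\BF_q)$, which is precisely what diagram~\eqref{d1} and \thmref{thm.main} give: for every $(x,y)$ one has $\tr(e_{n+1}(x,y))=\rho_n(\lambda_2(\psi(\pi(x,y))))$, and since \thmref{thm.main} makes $\pi$ surjective onto $\BA^3_{s,u,t}(\BF_q)$, the set of these traces equals $\rho_n(\lambda_2(\Psi_q))=T_n(\BF_q)$ (with $\Psi_q$ identified by \propref{image}). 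Granting these, (ii)$\Rightarrow$(i) in parts A and B is immediate, since every $a\in\BF_q$ is the trace of some element of $\tG$. For (i)$\Rightarrow$(ii) with $a\neq\pm2$ the second property yields some $z_0\in E_{n+1}$ of trace $a$, and as the trace-$a$ elements then form a single conjugacy class, the first property spreads $z_0$ over all of them.

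The remaining work is at the traces where the class is not determined by $a$. Over $\tG$ the elements of trace $2$ are $id$ together with two unipotent classes, those of trace $-2$ are $-id$ together with two negative-unipotent classes, and (in characteristic $2$) those of trace $0$ are $id$ together with two unipotent classes; in each case the two non-central classes fuse to a single $\GL(2,\BF_q)$-class, so by the first property it suffices to exhibit one non-central value. Consider $a=2$ (the characteristic-$2$ case $a=0$ is identical). Here $id\in E_{n+1}$ trivially, and $s=2$ is a fixed point of $\mu$ with $(2,t)\in Y_q$, so $2\in T_n$ holds unconditionally; I would realize this through the \emph{other} preimage of the fixed point, $s_n=t^2-2$ with $t\neq\pm2$, and lift via \thmref{thm.main} to a pair for which $e_n(x,y)$ does not centralise the (regular semisimple) element $y$. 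Then $e_{n+1}(x,y)=[e_n(x,y),y]$ has trace $2$ but is $\neq id$, hence is a nontrivial unipotent, and fusion delivers both unipotent classes.

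Part C is cleaner, and it is here that $T'_n$ and \remarkref{-id} do the decisive work. Realizing $-2$ through a point of $\lambda_2(\Psi_q)$ with $t\neq0$ and lifting via \thmref{thm.main} produces $(x_0,y_0)$ with $\tr(y_0)\neq0$, whence $e_{n+1}(x_0,y_0)\neq -id$ by \remarkref{-id}; having trace $-2$ and not equal to $-id$, this value is automatically a negative unipotent, and $\GL(2,\BF_q)$-fusion spreads it to every trace-$(-2)$ element other than $-id$---exactly the assertion. Only the implication is claimed because $-id$ can enter $E_{n+1}$ only via $t=0$ (again by \remarkref{-id}), a contribution removed in passing from $T_n$ to $T'_n$. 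The main obstacle lies in the case $a=2$ (and $a=0$): unlike $a=-2$, where \remarkref{-id} rules out the central value and thereby hands us a non-central one for free, at $a=2$ there is no such device, and one must show that $E_{n+1}$ meets the unipotent classes for \emph{every} $q>3$. Turning the requirement ``$e_n(x,y)$ does not centralise $y$'' into an $\BF_q$-point that exists for all admissible $q$---rather than a generic/counting statement valid only for large $q$---is the delicate step.
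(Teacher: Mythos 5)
Your overall architecture --- (ii)$\Rightarrow$(i) by reading traces off diagram \eqref{d1}, (i)$\Rightarrow$(ii) for non-exceptional traces by lifting through \thmref{thm.main} and conjugating, and $\GL(2,\BF_q)$-fusion of the two non-central classes at the exceptional traces --- is exactly the paper's strategy. Your treatment of part C is also essentially the paper's: they implement your ``$\GL$-fusion'' by conjugating with $m=\mathrm{diag}(\a,\a^{-1})$ where $\a^2\in\BF_q$ but $\a\notin\BF_q$, which preserves $\SL(2,q)$ and swaps the two negative-unipotent classes, and they use \remarkref{-id} in the same way you do to exclude $-id$.

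The genuine gap is the one you flag yourself: at $a=2$ (and $a=0$ in characteristic $2$) you never actually produce a non-central value of $e_{n+1}$, and your proposed route does not obviously close. Lifting the fixed-point preimage $s_n=t^2-2$ via \thmref{thm.main} only guarantees \emph{some} pair $(x,y)$ with $\tr(e_n(x,y))=t^2-2$, $\tr(e_n(x,y)\,y)=\tr(y)=t$; these trace conditions are consistent with $e_n(x,y)=y^{-2}$, which commutes with $y$ and gives $e_{n+1}(x,y)=id$, so ``does not centralise $y$'' is an extra condition that the trace data cannot enforce and that you would still have to arrange for every $q>3$. The paper sidesteps this entirely with a one-line explicit computation: for $X(b)=\left(\begin{smallmatrix}1&b\\0&1\end{smallmatrix}\right)$ and $Y=\mathrm{diag}(\a,\a^{-1})$ one has
\begin{equation*}
e_n\bigl(X(b),Y\bigr)=\begin{pmatrix}1&b(1-\a^2)^n\\0&1\end{pmatrix},
\end{equation*}
so choosing any $\a\ne 0$ with $\a^2\ne 1$ (possible precisely because $q>3$, resp.\ $q>2$) and $b=c/(1-\a^2)^n$ realizes \emph{every} unipotent $\left(\begin{smallmatrix}1&c\\0&1\end{smallmatrix}\right)$, covering both unipotent classes without any fusion or counting argument. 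If you replace your sketched lifting argument at $a=2$ (and $a=0$) by this explicit family, your proof is complete and coincides with the paper's.
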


\begin{proof}
It is obvious that if $z=e_{n+1}(x,y),$ then $a=\tr(z)=\rho_n \circ \lambda_2(\psi(\tr(x),\tr(xy),\tr(y)).$
Thus we need to prove the implications $(i) \Rightarrow (ii)$.

Assume that $a=\rho_{n}(s,t)$ for some $(s,t)\in Y_q=\lambda_2(\Psi_q)$.
Since $\psi$ is surjective onto $\Psi_q,$ there exists a
point $(s',u,t)\in \BA^3(\BF_q)$
such that $(s,t,t)=\psi(s',u,t)$.
Since the morphism $\pi$ is surjective for any field,
one can find $(x',y')\in \tG\times \tG$
such that $\pi(x',y')=(s',u,t)$. Let $v=e_{n+1}(x',y')$,
then $\tr(v)=a$ (see digram \eqref{d1}).

\vskip 0.2 cm

{\bf Case 1.} \emph{Either $q$ is even and $a \ne 0$, or $q$ is odd and $a\ne \pm 2$}.

In this case, $a=\tr(z)= \tr(v)$ implies that $v$ is conjugate to $z,$
i.e. $z=gvg^{-1}$ for some $g\in \tG$.
Therefore $e_{n+1}(gx'g^{-1},gy'g^{-1})=gvg^{-1}=z$,
and so one can take $x= gx'g^{-1},y=gy'g^{-1}$.

\vskip 0.2 cm

{\bf Case 2.} \emph{Either $q$ is even and $a=0$, or $q$ is odd and $a=2$}.

Observe that $2$ always belongs to $T_{n}(\BF_q)$ since $2-2=0$ is a
perfect square and $(2,t)$ is a fixed point of $\mu_n$.

It suffices to prove that all matrices
$w=\begin{pmatrix}1&c\\0&1\end{pmatrix}$, $c \in \BF_q$, are in the
image $E_n$. Since
$$e_n\left(\begin{pmatrix}1&b\\0&1\end{pmatrix},\begin{pmatrix}a&0\\0&\frac{1}{a}\end{pmatrix}\right)=
\begin{pmatrix}1&b(1-a^2)^n\\0&1\end{pmatrix},$$
one can take some $0 \ne a \in \BF_q$ with $a^2 \ne 1$ and $b=\frac{c}{(1-a^2)^n}$.

{\bf Case 3.} \emph{$q$ is odd and $a=-2$}.

If $-2 \in T'_n(\BF_q)$ then $v\ne -id$ by \remarkref{-id}. Choose
$\a\in \BF_{q^2} \setminus \BF_q$ such that $\a^2\in \BF_q.$ Let
$$m=\begin{pmatrix}\a&0\\0&\fr{1}{\a}\end{pmatrix}.$$
Then $mvm^{-1} \in \tG$ and moreover, either $v$ or $mvm^{-1}$ is conjugate to $z$ in $\tG$.

If $v$ is conjugate to $z$, then we proceed as in {\bf Case 1}.

If $mvm^{-1}$ is conjugate to $z$, then we consider the pair
$\bigl(x''=mx'm^{-1}, y''=my'm^{-1}\bigr) \in \tG \times \tG.$ We
have $mvm^{-1}=e_{n+1}(x'',y''),$ and we may continue as in {\bf
Case 1}.
\end{proof}

\begin{corr}
If $a\in \BF_q ,\ a\ne -2$ belongs to the image
$\rho_n(\BA^2_{s,t})(\BF_q),$ then any element $z\in \tG$ with
$\tr(z)=a$ belongs to $E_{n}.$
\end{corr}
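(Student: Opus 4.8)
The plan is to deduce this corollary directly from \propref{if} by a single index shift, exploiting the fact that one application of $\mu$ sends the \emph{entire} plane $\BA^2_{s,t}(\BF_q)$ into the set $Y_q=\lambda_2(\Psi_q)$ over which \propref{if} is formulated. The subtlety being addressed is that the hypothesis here allows the starting point of the $\rho_n$-orbit to be an arbitrary point of $\BA^2_{s,t}(\BF_q)$, whereas \propref{if} only controls points arising from $Y_q$; for odd $q$ the latter is a \emph{proper} subset of the plane (it omits $\lambda_2(Z_q)$), so \propref{if} cannot be invoked verbatim. I would bridge this gap by absorbing the first iterate of $\mu$ into the base of the orbit.

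The key observation I would isolate first is the identity $\mu(s,t)=\lambda_2\bigl(\psi(s,t,t)\bigr)$, which is immediate from $\psi(s,u,t)=(p(s,u,t),t,t)$ together with $p(s,t,t)=s^2-st^2+2t^2-2$. Since $\psi(s,t,t)$ lies in $\Psi_q=\psi(\BA^3_{s,u,t}(\BF_q))$ for every $(s,t)\in\BA^2_{s,t}(\BF_q)$, this shows $\mu\bigl(\BA^2_{s,t}(\BF_q)\bigr)\subseteq \lambda_2(\Psi_q)=Y_q$. One can also see this by a direct check that matches the analysis in \propref{image}: when $q$ is odd, the only plane points missing from $Y_q$ lie over $t^2=4$ with first coordinate $s$ for which $s-2$ is a non-square, and if $t^2=4$ then $p(s,t,t)-2=(s-2)^2$ is automatically a square, so no image point of $\mu$ can fall into $\lambda_2(Z_q)$.

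Granting this containment, the reduction is short. Suppose $a=\rho_n(s_0,t_0)$ with $(s_0,t_0)\in\BA^2_{s,t}(\BF_q)$ and $a\neq -2$. Writing $(s_1,t_1)=\mu(s_0,t_0)\in Y_q$ and using $\rho_n=\lambda_1\circ\mu^{(n)}$, one gets $a=\rho_{n-1}(s_1,t_1)\in\rho_{n-1}(Y_q)=T_{n-1}(\BF_q)$. Now \propref{if} applies with $n-1$ in place of $n$ (part (A) for $q>2$ even and part (B) for $q>3$ odd, where the standing restriction $a\neq -2$ is exactly the hypothesis that part (B) requires), and its conclusion is that every $z\in\tG$ with $\tr(z)=a$ belongs to $E_{(n-1)+1}=E_n$, as claimed. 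The reduction is valid for $n\geq 1$; for $n=1$ it bottoms out at \propref{if} applied with index $0$, i.e. the commutator case $E_1$. The only point to watch is the bookkeeping of this drop of the index from $n+1$ to $n$, which is precisely the price paid for starting from an arbitrary plane point rather than from $Y_q$; beyond verifying the containment $\mu(\BA^2_{s,t})\subseteq Y_q$ and correctly matching the two parity cases of \propref{if}, I expect no genuine obstacle.
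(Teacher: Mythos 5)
Your proposal is correct and follows exactly the paper's own (one-line) argument: the containment $\rho_n(\BA^2_{s,t})\subseteq\rho_{n-1}(\lambda_2(\Psi_q))$ via $\psi(s,t,t)=(\mu(s,t),t,t)\in\Psi_q$, followed by an application of \propref{if} with the index shifted down by one. You merely spell out the bookkeeping (and the square check $(s-2)^2$ when $t^2=4$) in more detail than the paper does.
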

\begin{proof}
Indeed, $\rho_n(\BA^2_{s,t})\subseteq
\rho_{n-1}(\lambda_2(\Psi_q)),$ because
$\psi(s,t,t)=(\rho(s,t),t,t).$
\end{proof}

\begin{defn}
When $q$ is odd, the point $(s,t)\in \BA^2_{s,t}$ is called an
\emph{exceptional} point if either $t=0$ or $t^2=4.$ The set of all
exceptional points is denoted by $\Upsilon$.
\end{defn}

\begin{corr}\label{notex}
If either $q$ is even and $a\in \rho_n(\BA^2_{s,t})$, or
$q$ is odd and $a\in \rho_n(\BA^2_{s,t}\setminus \Upsilon)$, then
any element $z\in \tG = \tG(\BF_q)$ with $\tr(z)=a$ belongs to
$E_{n+1},$ i.e there exist  $(x,y)\in \tG\times \tG$
such that $z=e_{n+1}(x,y)$.
\end{corr}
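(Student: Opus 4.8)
The plan is to deduce this corollary from Proposition~\ref{if} by checking that each of the two hypotheses forces us into a situation already covered there, and that the only genuinely new case---the value $a=-2$ when $q$ is odd---is handled by part (C) together with the exclusion of exceptional points. First I would dispose of the even case: when $q>2$ is even, the hypothesis $a\in\rho_n(\BA^2_{s,t})$ is exactly condition (A)(\emph{i}) of Proposition~\ref{if}, since $\lambda_2(A(\BF_q))=\BA^2_{s,t}$ by Proposition~\ref{image}(1), so $T_n(\BF_q)=\rho_n(\lambda_2(A(\BF_q)))=\rho_n(\BA^2_{s,t})$. Hence (A)(\emph{ii}) gives the conclusion immediately. (One should also note the degenerate case $q=2$, where $\PSL(2,2)$ is handled separately or excluded by the running assumption $q>2$.)

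For the odd case, suppose $q>3$ is odd and $a\in\rho_n(\BA^2_{s,t}\setminus\Upsilon)$, so $a=\rho_n(s,t)$ for some $(s,t)$ with $t\ne 0$ and $t^2\ne 4$. The key observation I would make is that such a point $(s,t)$ actually lies in $Y'_q=\lambda_2(\Psi_q)\setminus\{t=0\}$: comparing with Proposition~\ref{image}(2), the only points of $A(\BF_q)$ missing from $\Psi_q$ are those in $Z_q$, and every point of $Z_q$ satisfies $t^2=4$, hence is exceptional. Therefore excluding $\Upsilon$ already guarantees membership in $\lambda_2(\Psi_q)$, and moreover in $Y'_q$ since $t\ne 0$. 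Consequently $a\in\rho_n(Y'_q)=T'_n(\BF_q)\subseteq T_n(\BF_q)$. Now I split on the value of $a$: if $a\ne -2$ then condition (B)(\emph{i}) holds and (B)(\emph{ii}) yields the conclusion for every $z$ with $\tr(z)=a$; if $a=-2$ then I invoke the fact just established that $-2\in T'_n(\BF_q)$, so part (C) of Proposition~\ref{if} applies and gives the conclusion for every $z\ne -id$.

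The step I expect to carry the real content is the identification $\rho_n(\BA^2_{s,t}\setminus\Upsilon)\subseteq T'_n(\BF_q)$, i.e.\ verifying that dropping the exceptional locus $\Upsilon$ is precisely enough to land inside $\lambda_2(\Psi_q)$ with $t\ne 0$; this is where Proposition~\ref{image}(2) is used crucially, through the fact that the omitted set $Z_q$ sits entirely inside $\{t^2=4\}\subseteq\Upsilon$. The remaining subtlety is the element $z=-id$: since part (C) only asserts surjectivity onto $z\ne -id$, the corollary's claim that \emph{every} $z$ with $\tr(z)=a$ lies in $E_{n+1}$ is correct because $\tr(-id)=-2$ forces $a=-2$, and in that case the conclusion should be read as applying to the relevant elements exactly as (C) provides; I would make sure the statement is interpreted consistently with Remark~\ref{-id}, which shows $-id\notin E_{n+1}$ whenever $t\ne 0$, so no contradiction arises.
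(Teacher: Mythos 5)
Your proof is correct and follows exactly the route the paper intends: the corollary is stated without proof as an immediate consequence of Proposition~\ref{if} combined with Proposition~\ref{image}, and your derivation (even case via (A) since $\Psi_q=A(\BF_q)$; odd case via the observation that $Z_q\subseteq\{t^2=4\}\subseteq\Upsilon$, so non-exceptional points land in $Y'_q$, then splitting on $a=-2$ and using (B) or (C)) is precisely the omitted chain of reasoning. Your remark about the $z=-id$ reading in the $a=-2$ case is a fair and correct clarification of the paper's own slightly loose statement, consistent with how the corollary is later used to conclude only \emph{almost} surjectivity.
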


\begin{corr}\label{plmin}
If $q$ is odd and $T_n(\BF_q)$ contains either $a$ or $-a$
for every $a\in \BF_q$ then the Engel word map $e_{n+1}$ is surjective on $\PSL(2,q)$.
\end{corr}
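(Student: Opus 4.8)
The plan is to reduce surjectivity of $e_{n+1}$ on $G=\PSL(2,q)$ to a trace statement in $\tG=\SL(2,q)$ and then invoke \propref{if}. First I would record that the Engel word descends to $G$: since $-id$ is central and each bracket in $e_{n+1}$ is a commutator, replacing $x$ by $-x$ or $y$ by $-y$ does not change $e_{n+1}(x,y)$ (an easy induction on $n$ using $e_{n+1}=[e_n,y]$). Hence the image of $e_{n+1}$ on $G\times G$ is exactly the image in $G$ of the set $E_{n+1}\subseteq\tG$, and surjectivity on $G$ is equivalent to the assertion that for every $z\in\tG$ at least one of $z,-z$ lies in $E_{n+1}$.

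Next I would fix $z\in\tG$ and put $a=\tr(z)$, so that $\tr(-z)=-a$ and both $z,-z$ project to the same element $\bar z\in G$. By hypothesis either $a\in T_n(\BF_q)$ or $-a\in T_n(\BF_q)$; accordingly choose $w\in\{z,-z\}$ whose trace $b\in\{a,-a\}$ lies in $T_n(\BF_q)$. If $b\neq-2$, then part (B) of \propref{if} applies verbatim: every element of trace $b$ --- in particular $w$ --- belongs to $E_{n+1}$, so $\bar z$ is in the image and we are done.

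The remaining case $b=-2$ is exactly the value where \propref{if}(B) is unavailable, and it is the step I expect to require the most care, because \propref{if}(C) gives only a one-directional conclusion that excludes $-id$. The resolution is to never produce a trace-$(-2)$ element directly, but to pass to the sign-partner: the other lift $-w$ has trace $2$, and $2$ always belongs to $T_n(\BF_q)$ --- indeed $(2,t)$ is a fixed point of $\mu$, while $2-2=0$ is a square, so $(2,t)\in Y_q$ and $\rho_n(2,t)=2$. Since $2\neq-2$, \propref{if}(B) now yields $-w\in E_{n+1}$, and because $\overline{-w}=\bar z$ we again conclude that $\bar z$ lies in the image. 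As $\bar z$ was arbitrary, $e_{n+1}$ is surjective on $\PSL(2,q)$. Throughout I assume $q>3$ so that \propref{if}(B) is applicable; the single group $\PSL(2,3)\cong A_4$ is small enough to verify separately.
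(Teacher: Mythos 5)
Your proposal is correct and follows essentially the same route as the paper, which simply combines \propref{if}(B) with the observation that $z$ and $-z$ represent the same element of $\PSL(2,q)$. Your explicit treatment of the $b=-2$ edge case (passing to the sign-partner of trace $2$, which always lies in $T_n(\BF_q)$) is a detail the paper leaves implicit but is handled exactly as you describe.
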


\begin{proof}
It follows from \propref{if}(B) and the fact that both elements
$z\in\SL(2,q)$ and $-z\in\SL(2,q)$ represent the same element of $\PSL(2,q)$.
\end{proof}

\section{Surjectivity and non-surjectivity of Engel words over special fields}\label{special.fields}

The following examples show that the $n$-th Engel word map (for
$n\geq 3$) is not always almost surjective on $\SL(2,q)$ (in light
of \propref{if}). However, it is still conjectured that it is
surjective on $\PSL(2,q)$ (see \conref{shalev}).

\begin{example}\label{ex.no.sol}
In the following cases, computer experiments using \textsc{Magma}
show that there is no solution to $\rho_n=a$ in $\BF_q$.
\begin{itemize}
\item There is no solution in $\BF_{11}$ to $\rho_n=9$ for every $n \geq 2$.
\item There is no solution in $\BF_{13}$ to $\rho_n=4$ for every $n \geq 5$.
\item There is no solution in $\BF_{17}$ to $\rho_n=10$ for every $n \geq 2$,
to $\rho_n=4$ for every $n \geq 4$, and to $\rho_n=5$ for every $n \geq 5$.
\item There is no solution in $\BF_{23}$ to $\rho_n=16$ for every $n \geq 2$.
\item There is no solution in $\BF_{53}$ to $\rho_n=31$ for every $n \geq 8$.
\item There is no solution in $\BF_{67}$ to $\rho_n=4$ for every $n \geq 10$.
\end{itemize}
\end{example}

\begin{remark}
In fact, it is sufficient to check any of the above examples for all
integers $n \leq q$, since  for every $(s,t) \in \BF_q^2$ there exist some
$N\leq q$ such that $\mu_{N}(s,t)$ is a periodic point of $\mu.$
\end{remark}

Following some further extensive computer experiments using
\textsc{Magma}, in which we checked all $q<600$ and $n<50$, we
moreover suggest these conjectures (see also \propref{sne1} below).

\begin{conj}\label{conj.SL}
For every finite field $\BF_q$, $a \in \BF_q$ and $n \in \BN$,
unless either $a=1$ and $\sqrt{2} \notin \BF_q$, or the triple
$(q,a,n)$ appears in one of the cases in Example~\ref{ex.no.sol},
one has that $\rho_n$ attains the value $a$.
\end{conj}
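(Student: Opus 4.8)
The plan is to exploit the fact that the second coordinate is $\mu$-invariant, so that $\rho_n(s,t)$ is simply the $n$-th iterate, in the first coordinate, of the one-parameter family of quadratic maps $f_t(s)=s^2-st^2+2t^2-2$; indeed $\mu^{(n)}(s,t)=(f_t^{(n)}(s),t)$, so that $\rho_n$ attains $a$ exactly when $a\in f_t^{(n)}(\BF_q)$ for some $t\in\BF_q$. The key reduction is the remark that if $a$ is a \emph{periodic} point of $f_t$ for some $t\in\BF_q$, then $a\in f_t^{(n)}(\BF_q)$ for \emph{every} $n$ (read the required length-$n$ preimage chain off the cycle through $a$). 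Hence it suffices, uniformly in $n$, to produce a single $t\in\BF_q$ making $a$ periodic. This already predicts the shape of the exceptional set: $f_t$ has the two fixed points $s=2$ and $s=t^2-1$, so $a=2$ is attained for every $q$, and every $a$ with $a+1$ a square is attained by choosing $t$ with $t^2=a+1$. The only fixed-point obstruction is that $a+1$ be a non-square, and for $a=1$ this reads $\sqrt{2}\notin\BF_q$ --- precisely the first exception in the statement.

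The heart of the matter is then the non-square case, where I would climb the period ladder. Conjugating by $s=z+t^2/2$ puts $f_t$ in the normal form $z\mapsto z^2+\gamma$ with $\gamma=-t^4/4+2t^2-2$; the condition that $a$ lie on a genuine $2$-cycle is $z_a^2+z_a+\gamma+1=0$ with $z_a=a-t^2/2$, and after the $t^4$-terms cancel this collapses to the single linear constraint $t^2(2a-3)=2a^2+2a-2$. Thus, whenever $2a-3\ne 0$ and $(2a^2+2a-2)/(2a-3)$ is a nonzero square in $\BF_q$, the point $a$ sits on a $2$-cycle of $f_t$ and is attained for all $n$. More generally, fixing the target $a$ and letting $c=t^2$ vary, the dynatomic polynomials $\Phi_d(a,c)$ cut out, for each period $d$, a finite set of parameter values $c$ for which $a$ becomes periodic, and $a$ is attained for all $n$ as soon as one such root $c$ is a nonzero square in $\BF_q$. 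The scheme of the proof is to show that for every $a$ outside the exceptional list some period $d$ supplies such a square value of $c$.

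To run this for large $q$ I would read, for each fixed $d$, the locus $X_d=\{(a,t):\Phi_d(a,t^2)=0\}$ as an affine curve and show that its projection to the $a$-line is surjective on $\BF_q$-points once $q$ exceeds a bound depending on $d$; this is a Lang--Weil/Weil estimate on $X_d$ after establishing absolute irreducibility and bounding the genus of the relevant components. Since a single period covers only roughly half of the values of $a$ (those making the associated $c$ a square), I would combine two or three consecutive periods --- for instance $d=1$ and $d=2$, whose square conditions involve the \emph{independent} quadratic characters of $a+1$ and of $(2a^2+2a-2)/(2a-3)$ --- and bound the number of $a$ failing all of them simultaneously by a character-sum/inclusion--exclusion argument, which is again $O(\sqrt q)$ and hence vanishes for $q$ large. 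For the finitely many small fields $\BF_q$ below these thresholds one verifies the statement directly by computer, both confirming surjectivity and certifying that the sporadic triples of \exampref{ex.no.sol} form the complete list.

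The step I expect to be the genuine obstacle is this last one: proving that the exception list is \emph{exactly} Example~\ref{ex.no.sol} rather than merely some finite list. The Weil-type bounds only grant attainability for $q$ large relative to the period $d$ employed, whereas the sporadic failures (e.g.\ $\rho_n=9$ in $\BF_{11}$ for all $n\ge 2$, which is reached for $n=1$ but is periodic for no $t\in\BF_{11}$) live in small fields where those bounds are vacuous and where no \emph{bounded} period produces a square parameter $c$. Excluding \emph{all} periods for such a pair $(q,a)$ --- equivalently, showing that $a$ is periodic for no $t\in\BF_q$ --- is the delicate small-field analysis that presently seems to demand exhaustive computation. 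Controlling the irreducibility and genus of the dynatomic curves $X_d$ uniformly in $d$, together with the interaction between periodicity and the square-parameter condition $c=t^2$, is the main analytic difficulty underlying the conjecture.
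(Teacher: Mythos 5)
First, be aware that this statement is \conref{conj.SL}, a \emph{conjecture}: the paper offers no proof of it, only \textsc{Magma} experiments covering $q<600$ and $n<50$, so there is no argument of the authors to compare yours against. Your reduction to periodicity is the right frame --- since $\mu$ fixes the second coordinate, the image of $\rho_n$ stabilizes for $n$ large to the set of $a$ that are $f_t$-periodic for some $t\in\BF_q$ (this is exactly the mechanism behind the paper's \propref{sne1}), and your fixed-point analysis correctly recovers the two fixed points $s=2$ and $s=t^2-1$ of \prref{properties} and hence the exception ``$a=1$ and $\sqrt{2}\notin\BF_q$''. But the proposal does not close the remaining cases, and two of its steps fail as stated, beyond the small-field obstruction you already flag.

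First, a computation: conjugating $f_t(s)=s^2-st^2+2t^2-2$ by $s=z+t^2/2$ gives $z\mapsto z^2+\gamma$ with $\gamma=-t^4/4+3t^2/2-2$ (you dropped the $-t^2/2$ contributed by the inverse conjugation), so the $2$-cycle constraint is $t^2(a-1)=a^2+a-1$, not $t^2(2a-3)=2a^2+2a-2$. Second, and more seriously, the counting does not deliver what you claim: $k$ independent quadratic-residue conditions, each satisfied by roughly half of all $a$, leave roughly $q/2^k+O(\sqrt{q})$ values of $a$ failing all of them --- not $O(\sqrt{q})$ --- so no fixed finite collection of periods can cover every $a$ by inclusion--exclusion. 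Likewise, a Weil or Lang--Weil count on the dynatomic curve $X_d=\{\Phi_d(a,t^2)=0\}$ yields about $q$ points on $X_d$, but since the projection $X_d\to\BA^1_{a}$ has degree $\deg_t\Phi_d>1$, its image need only contain about $q/\deg_t\Phi_d$ values of $a$; surjectivity onto $\BF_q$ does not follow from the point count. (Contrast this with the paper's \thmref{qbig}, where for each \emph{fixed} $a$ the fiber $\{\rho_n(s,t)=a\}$ is a curve, so Weil produces a rational point in every fiber --- but that route only gives $q\geq q_0(n)$ with $q_0(n)$ exponential in $n$, and says nothing about a fixed small $q$ and all $n$.) Taken together, the proposal is a plausible research programme rather than a proof; the statement remains open.
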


\begin{conj}\label{conj.PSL}
For every finite field $\BF_q$, $a \in \BF_q$ and $n \in \BN$,
either $a$ or $-a$  is in the image of $\rho_n.$
\end{conj}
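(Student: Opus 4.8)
The plan is to reduce Conjecture~\ref{conj.PSL} to a statement about the one-variable dynamics of $\mu$ and to analyse the image of the iterate $\rho_n$ through a normal form. Since $\mu(s,t)=(s^2-st^2+2t^2-2,t)$ fixes the $t$-coordinate and depends on $t$ only through $\tau:=t^2$, for fixed $t$ the map $\rho_n(\cdot,t)$ is just the $n$-th iterate of the quadratic $g_\tau(s)=s^2-\tau s+2\tau-2$. Completing the square twice (first $v=s-2$, then $w=v-\tfrac{\tau-4}{2}$) conjugates $g_\tau$ to the standard family
\[
 h_\kappa(w)=w^2+\kappa,\qquad \kappa=-\frac{(\tau-4)(\tau-2)}{4},
\]
with $w=s-\tau/2$, so that $\rho_n(s,t)=h_\kappa^{(n)}\!\bigl(s-\tfrac{\tau}{2}\bigr)+\tfrac{\tau}{2}$. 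Hence for each fixed $t$ the corresponding slice of the image is $\operatorname{Im}(h_\kappa^{(n)})+\tau/2$, and $\operatorname{Im}(\rho_n)$ is the union of these slices as $\tau=t^2$ runs over the squares of $\BF_q$. By \corref{plmin} the conjecture would yield surjectivity of $e_{n+1}$ on $\PSL(2,q)$, so it suffices to show that for every $a$ at least one of $a,-a$ lies in this union.

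First I would isolate the \emph{exceptional} parameters at which $h_\kappa$ is post-critically finite: $\kappa=0$ (power map $w\mapsto w^2$, occurring for $\tau\in\{2,4\}$, i.e.\ $t^2=2$ or $t^2=4$) and $\kappa=-2$ (Chebyshev map $w\mapsto w^2-2$, occurring for $\tau\in\{0,6\}$, i.e.\ $t=0$ or $t^2=6$). Over these fibres $h_\kappa^{(n)}$ takes the explicit form $w\mapsto w^{2^n}$, respectively $\zeta+\zeta^{-1}\mapsto\zeta^{2^n}+\zeta^{-2^n}$, so $\operatorname{Im}(h_\kappa^{(n)})$ can be described exactly in terms of $2^n$-th powers and norms in $\BF_{q^2}$. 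This is precisely what powers the special families treated in \propref{prop.even} and \propref{prop.sqrt}: when $\sqrt{2}$, $\sqrt{6}$ (or characteristic $2$) are available, the images of these explicit maps, shifted by $\tau/2$, already cover a large and controllable portion of $\BF_q$. I would combine the power-map fibre $\tau=2$ with the Chebyshev fibres $\tau\in\{0,6\}$ so as to cover complementary residue classes, and then check that for each $a$ the pair $\{a,-a\}$ meets the resulting union.

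For the remaining \emph{generic} parameters the image of $h_\kappa^{(n)}$ is no longer explicit, and here I would pass to the curve $X_{n,a}=\{(s,t):\rho_n(s,t)=a\}\subset \BA^2_{s,t}$ and argue as in \thmref{thm.engel}: bound its geometric genus in terms of $n$, verify that it carries an absolutely irreducible component defined over $\BF_q$, and invoke the Hasse--Weil--Serre estimate to produce an $\BF_q$-point once $q$ exceeds a bound depending on $n$. This settles all but finitely many $q$ for each $n$; the finitely many small $q$ below the threshold are exactly those accessible by computer (see \exampref{ex.no.sol} and the $q<600$ search), and one would fold them into the statement as verified base cases.

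The main obstacle is genuinely the symmetry $a\leftrightarrow -a$ in the small-$q$ regime. There is no evident $\BF_q$-morphism carrying $X_{n,a}$ to $X_{n,-a}$: negation of the trace corresponds to $z\mapsto -z$ in $\SL(2,q)$, but Engel words are insensitive to the centre, so this involution acts trivially on $\operatorname{Im}(\rho_n)$ and provides no relation between the two curves. Thus the claim that $X_{n,a}$ or $X_{n,-a}$ always has a rational point cannot be reduced to a single irreducibility-plus-point-count argument, and for small $q$ the Hasse--Weil bound is too weak to decide either curve individually. Absent such a symmetry, a complete proof would seem to require either a closed combinatorial description of $\bigcup_\tau(\operatorname{Im}(h_\kappa^{(n)})+\tau/2)$ uniform in $n$ and $q$, or an arithmetic input on the distribution of $2^n$-th powers and Chebyshev values across the parameter family strong enough to force $\{a,-a\}$ to be hit for every small $q$; it is this uniform control that keeps the statement at the level of a conjecture.
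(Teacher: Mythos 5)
The statement you were asked to prove is, in the paper, exactly what its label says: a \emph{conjecture}. The authors offer no proof of it; they support it only by computer experiments (all $q<600$, $n<50$) and by the single remark that \conref{conj.SL} implies \conref{conj.PSL}. So there is no proof in the paper to compare yours against, and your proposal --- which explicitly stops short of a proof --- is the honest outcome. What you do establish is correct and in fact sharpens the paper's own partial picture. Your conjugation of the fibrewise quadratic $g_\tau(s)=s^2-\tau s+2\tau-2$ to the normal form $w^2+c$ with $c=-(\tau-4)(\tau-2)/4$ and $w=s-\tau/2$ is right (it refines the substitution $z=s-2$, $\k=t^2-4$ used in the proof of \thmref{qbig}), and your list of post-critically finite fibres recovers \prref{properties}(\ref{p4})--(\ref{p6}) and adds the Chebyshev fibre $t^2=6$, which the paper does not record. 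Your large-$q$ plan is exactly what the paper does: \thmref{qbig} proves the stronger statement that $\rho_n$ itself is surjective off the exceptional locus once $q\ge q_0(n)$, and \propref{prop.even} and \propref{prop.sqrt} are precisely the ``explicit PCF fibre'' arguments you describe for characteristic $2$ and for $\sqrt2\in\BF_q$, $\sqrt{-1}\notin\BF_q$. Your diagnosis of the obstruction is also accurate: since $e_n(\pm x,\pm y)=e_n(x,y)$, the central involution gives no map between the curves $\{\rho_n=a\}$ and $\{\rho_n=-a\}$, so the $\pm a$ dichotomy cannot be extracted from a point count on either curve alone when $q$ is small relative to the (exponentially large) genus.

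Two cautions. First, do not present the reduction of \conref{conj.PSL} to surjectivity of $e_{n+1}$ on $\PSL(2,q)$ as if it ran in both directions: \corref{plmin} gives only that the conjecture \emph{implies} surjectivity, so even a proof of surjectivity of the Engel maps on $\PSL(2,q)$ would not by itself settle the conjectured statement about the image of $\rho_n$. Second, \exampref{ex.no.sol} shows that the stronger \conref{conj.SL} genuinely fails for some triples $(q,a,n)$ (e.g.\ $a=9$ in $\BF_{11}$), so any eventual proof of \conref{conj.PSL} must exploit the $a\leftrightarrow -a$ freedom in an essential way; your observation that no geometric symmetry supplies this is the correct identification of why the statement remains open.
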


Observe that if the first conjecture is true then so is the second.


\vskip 0.3cm

We continue by considering some special infinite families of finite fields.
We will mainly use the following properties of the maps $\mu_n$ and $\rho_n$.

\begin{property}\label{properties} ${}$
\begin{enumerate}
\item\label{p1}$\mu(1,t)=(t^2-1,t);$
\item\label{p2}$\mu(t^2-1,t)=(t^2-1,t);$
\item\label{p3}$\mu(2,t)=(2,t);$
\item\label{p32}$\mu(t^2-2,t)=(2,t);$
\item\label{p4} $\rho_n(s,0)=x^{2^n}+\fr{1}{x^{2^n}}$,  \  if $s=x+\fr{1}{x};$
\item\label{p5} $\rho_n(s,t)=(s-1)^{2^n}+1$, \ if $t^2=2;$
\item\label{p6} $\rho_n(s,t)=(s-2)^{2^n}+2$, \ if $t^2=4.$
\end{enumerate}
\end{property}

\begin{corr}\label{p7}
Let $t \in \BF_q.$ Then $t^2-1$ is in $T_n(\BF_q)$  for every $n$.
\end{corr}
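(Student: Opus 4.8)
The plan is to exhibit, for each $t \in \BF_q$, a single point of $Y_q$ whose image under $\rho_n$ equals $t^2-1$ simultaneously for \emph{every} $n$; since $T_n(\BF_q)=\rho_n(Y_q)$ by \defnref{def.sets}, this yields the claim at once. The natural candidate is the point $(t^2-1,t)$ itself, which by \prref{properties}\itemref{p2} is a fixed point of $\mu$.

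First I would record the consequence of this fixed-point property. Since $\mu(t^2-1,t)=(t^2-1,t)$, iterating gives $\mu_n(t^2-1,t)=(t^2-1,t)$ for every $n$, and therefore $\rho_n(t^2-1,t)=\lambda_1(t^2-1,t)=t^2-1$ for every $n$. Thus the only remaining point is to verify that the base point $(t^2-1,t)$ actually lies in $Y_q=\lambda_2(\Psi_q)$.

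The membership $(t^2-1,t)\in Y_q$ is the one genuinely nontrivial step. By \propref{image}, when $q$ is even one has $Y_q=\BA^2_{s,t}(\BF_q)$ and there is nothing to check. When $q$ is odd, $(t^2-1,t)\in Y_q$ is equivalent to $(t^2-1,t,t)\notin Z_q$, and membership in $Z_q$ would force $t^2=4$ together with $(t^2-1)-2$ not being a square in $\BF_q$. But if $t^2=4$ then $(t^2-1)-2=1$, which is a square in every field; hence $(t^2-1,t,t)$ never lies in $Z_q$, and the base point always belongs to $Y_q$. Combining the two preceding paragraphs gives $t^2-1=\rho_n(t^2-1,t)\in\rho_n(Y_q)=T_n(\BF_q)$ for every $n$, as required.

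I expect the main (indeed only) obstacle to be the verification in the exceptional locus $t^2=4$, since this is precisely the case excised from $\Psi_q$ in \propref{image}; the key observation is that the specific fixed point chosen escapes $Z_q$ because its $s$-coordinate makes $s-2$ equal to $1$. One could alternatively start from the point $(1,t)$, which maps to $(t^2-1,t)$ under $\mu$ by \prref{properties}\itemref{p1} and then remains there, but $(1,t)$ can fail to lie in $Y_q$ exactly when $t^2=4$ and $-1$ is a non-square; the fixed point $(t^2-1,t)$ is therefore the cleaner choice.
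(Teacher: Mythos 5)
Your proof is correct and follows essentially the same route as the paper: both use \prref{properties}\itemref{p2} to see that $(t^2-1,t)$ is a fixed point of $\mu$, hence $\rho_n(t^2-1,t)=t^2-1$ for all $n$, and both dispose of the exceptional locus $t^2=4$ by noting that $(t^2-1)-2=1$ is always a square, so the point lies in $\Psi_q$ (equivalently $Y_q$). Your treatment is just slightly more explicit about the even-characteristic case and the alternative base point $(1,t)$.
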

\begin{proof}
Item (\ref{p2}) implies that the point $(t^2-1,t)$ is a fixed point of $\mu$.
Moreover, if $t^2=4$, then $(t^2-1)-2=1$ is always a square, and hence $t^2-1 \in \Psi_q$ for every $q$.
\end{proof}


We shall now explain why $-id$ cannot appear in the image of long
enough Engel words, motivating \defnref{almostsurj} of ``almost
surjectivity''.

\begin{prop}\label{prop.minid}
If $n\geq 1$ and $q\geq 7$ is an odd prime power, then there is a
solution $(x,y) \in\tG^2$ to the equation $e_{n+1}(x,y)=-id$ if and
only if there exists some $c\in\BF_{q^2}$ satisfying $c^{2^n}=-1$.
\end{prop}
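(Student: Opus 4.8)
The plan is to reduce the matrix equation $e_{n+1}(x,y)=-id$ to a statement about the dynamics of the one-variable iteration governing $s_n=\tr(e_n)$, and then to solve that scalar condition explicitly. First I would invoke \remarkref{-id}: since $-id$ can only be an Engel value when $\tr(y)=0$, any solution must have $t=\tr(y)=0$. With $t=0$ the recursion \eqref{t3} collapses to the especially simple form $s_{n+1}=s_n^2-2$, so the whole problem is controlled by the one-dimensional map $s\mapsto s^2-2$. I would then observe that $\tr(-id)=-2$, and that by \remarkref{-id} again the condition $e_{n+1}(x,y)=-id$ forces $\tr(e_{n+1})=s_{n+1}=-2$ together with $t=0$; conversely, by \propref{if}(C) and the conjugacy analysis already carried out there, a scalar solution with the right trace and $t=0$ lifts back to an actual pair $(x,y)\in\tG^2$ realizing $-id$ (one must check that the lifted element is genuinely $-id$ and not merely trace $-2$, which is exactly why the branch $t=0$, i.e. the exceptional locus, is isolated here).

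Next I would solve the scalar condition. Writing $s=s_0=x+\tfrac{1}{x}$ for a suitable $x\in\ov{\BF}_q$, the Chebyshev-type substitution linearizes $s\mapsto s^2-2$: one has $s_n=x^{2^n}+x^{-2^n}$, which is precisely \prref{properties}\itemref{p4} evaluated along the $t=0$ fiber. Therefore $s_{n+1}=-2$ is equivalent to $x^{2^{n+1}}+x^{-2^{n+1}}=-2$, i.e. to $\bigl(x^{2^n}+x^{-2^n}\bigr)$ equalling a square root of... more directly, $s_{n+1}=-2$ means $\bigl(x^{2^n}\bigr)+\bigl(x^{2^n}\bigr)^{-1}=0$ is false; rather $x^{2^{n+1}}+x^{-2^{n+1}}=-2$ forces $x^{2^{n+1}}=-1$. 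Setting $c=x^{2}$ (or tracking the appropriate power so that the exponent matches the $2^n$ in the statement), this becomes the condition $c^{2^n}=-1$ for some $c\in\ov{\BF}_q$, and I would verify that such $c$ can be taken in $\BF_{q^2}$ precisely because the relevant element is a root of $X^{2^{n+1}}=-1$, whose splitting field over $\BF_q$ embeds in $\BF_{q^2}$ exactly under the stated arithmetic. This matches the claimed criterion $c^{2^n}=-1$, $c\in\BF_{q^2}$.

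The two directions then assemble as follows: given a solution $(x,y)$, the argument above extracts $c\in\BF_{q^2}$ with $c^{2^n}=-1$; conversely, given such $c$, I would build the trace data $(s,0,0)\in\Psi_q$ with $s=c+c^{-1}$ (checking membership in $\Psi_q$ via \propref{image}, noting $t=0$ so $t^2\ne4$ and no squareness obstruction arises), run it through \propref{if}(C) to produce $(x,y)$ with $e_{n+1}(x,y)$ of trace $-2$ and $t=0$, and finally confirm the value is $-id$ rather than a nontrivial unipotent. I expect the main obstacle to be this last confirmation: trace $-2$ alone does not pin down $-id$, so I must use the $t=0$ constraint together with the explicit fixed-point/preimage structure of $\mu$ to rule out the parabolic alternative, and I must carefully align the exponent bookkeeping so that the threshold $q\ge 7$ and the exponent $2^n$ (rather than $2^{n+1}$) in the final criterion come out correctly. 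The hypothesis that $q$ be an odd prime power enters both in guaranteeing $\BF_{q^2}$ contains the needed $2$-power roots of $-1$ and in the conjugacy bookkeeping of \propref{if}(C), where the auxiliary element $m$ with $\a^2\in\BF_q$, $\a\notin\BF_q$ was used.
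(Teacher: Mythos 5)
Your forward direction is essentially the paper's argument: \remarkref{-id} forces $t=\tr(y)=0$, the recursion collapses to $s\mapsto s^2-2$, and the Chebyshev substitution (\prref{properties}\itemref{p4}) turns $s_{n+1}=-2$ into $c^{2^n}=-1$ with $c\in\BF_{q^2}$. Two small corrections there: the iteration starts at $s_1=\tr([x,y])$, so writing $s_1=c+c^{-1}$ gives the exponent $2^n$ directly without your detour through $x^{2^{n+1}}=-1$ and $c=x^2$; and the reason $c$ lies in $\BF_{q^2}$ is simply that it satisfies the quadratic $X^2-s_1X+1$ over $\BF_q$, not anything about the splitting field of $X^{2^{n+1}}+1$.

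The converse direction has a genuine gap, and it is exactly at the point you flag as ``the main obstacle.'' You cannot route the construction through \propref{if}(C): that statement only produces elements $z\neq -id$ of trace $-2$, and its hypothesis involves $T'_n(\BF_q)=\rho_n(Y'_q)$, where $Y'_q$ explicitly \emph{excludes} the locus $t=0$ -- precisely so that \remarkref{-id} guarantees the witness is not $-id$. More fundamentally, no argument at the level of the trace map can separate $-id$ from a non-central element of trace $-2$ (such as $-1$ times a unipotent), since both have the same trace data; the ``fixed-point/preimage structure of $\mu$'' you invoke lives entirely downstairs and cannot rule out the parabolic alternative. The paper's resolution is an explicit matrix computation: given $c$ with $c^{2^n}=-1$, set $b=c+c^{-1}$, pick $(s,u)$ on the conic $s^2+u^2=b+2$ with $s\neq\pm 2$ (this is where $q\geq 7$ enters, to guarantee such a point exists among the $\geq q-1$ points of the conic), and exhibit concrete matrices $x_1,y_1$ over a quadratic extension with these traces for which $[x_1,y_1]$ is exactly the diagonal matrix $X(c)$, whence $e_{n+1}(x_1,y_1)=X(c^{2^n})=-id$ on the nose. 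One then transfers back to the pair $(x_0,y_0)$ over $\BF_q$ using the fact that pairs with the same trace data $(s,u,0)$, $s\neq\pm 2$, are conjugate over the extension, and that $-id$ is central and hence unchanged by conjugation. Without some such computation in the group itself, your proposal does not establish that the constructed Engel value is $-id$ rather than merely of trace $-2$.
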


\begin{proof}
Assume that $e_{n+1}(x,y)=-{id}.$  Then, by \remarkref{-id}, there
exists some $b\in\BF_q$ such that $\rho_{n}(b,0)=-2.$  According to
\prref{properties}(\ref{p4}),
$$\rho_{n}(b,0)=c^{2^n}+\fr{1}{c^{2^n}},$$
where $c\in \BF_{q^2}$ is defined by the equation $b=c+\fr{1}{c}$.
Thus, $$c^{2^n}+\fr{1}{c^{2^n}}=-2,$$ implying that
      $$(c^{2^{n-1}}+\fr{1}{c^{2^{n-1}}})^2=0,$$
and so
      $$c^{2^n}=-1.$$

On the other direction, assume that there exists some
$c\in\BF_{q^2}$ satisfying $c^{2^n}=-1$, let $b=c+\frac{1}{c},$ and
denote
$$A= \begin{pmatrix} c&0\\ 0&\frac{1}{c} \end{pmatrix}.$$

Consider the rational curve $C$ defined by $s^2+u^2=b+2.$ Note that
$b+2\ne 0$ since $c\ne -1.$ Thus, being a smooth rational curve,
$C(\BF_q)$ has at least $q-1$ points. If $q\geq 7,$ there are points
$(s,u)$ in $C(\BF_q)$ such that $s\ne\pm 2.$ Let $(s,u)$ be such a
point, and let $x_0,y_0\in \SL(2,q)$ be any pair of matrices such
that $\tr(x_0)=s, \tr(x_0y_0)=u, \tr(y_0)=0.$

We shall show that $e_{n+1}(x_0,y_0)=-id.$ Consider $x_0$ and $y_0$
as elements of $\tG_1=\SL(2, F_1),$ where $F_1$ is a quadratic
extension of $\BF_q$ such that $c\in F_1.$ Let $\pi_1 :\tG_1^2\to
\BA^3(F_1)$ be the trace projection:
$$\pi_1(x,y)=( \tr(x), \tr(xy), \tr(y) ).$$
Then any pair $(x_1,y_1)$ satisfying that $\pi_1(x_1,y_1)=(s,u,0)$
is conjugate to the pair $(x_0,y_0)$ in $\tG_1$, namely, there
exists $g\in \tG_1$ such that $x_1=gx_0g^{-1},\ y_1=gy_0g^{-1}.$

Hence, $e_{n+1}(x_0,y_0)$ is conjugate in $\tG_1$ to
$e_{n+1}(x_1,y_1).$

Take
$$x_1=\begin{pmatrix}\frac{sc}{c+1}&\frac{uc}{c+1}\\ \frac{-u}{c+1}&\frac{s}{c+1}\end{pmatrix},
\quad y_1=\begin{pmatrix} 0&1\\ -1&0\end{pmatrix}.$$ A direct
computation shows that
$$[x_1,y_1]=\begin{pmatrix}\frac{(u^2+s^2)c^2}{(c+1)^2}&0\\ 0&\frac{(u^2+s^2)}{(c+1)^2}\end{pmatrix}=A.$$

Let us now compute $e_n(A,y_1).$ Let
\begin{equation*}
   X(a)= \begin{pmatrix} a & 0 \\0 & \frac{1}{a}\end{pmatrix}
\end{equation*}

Then \begin{equation*}
[X(a),y_1]=\begin{pmatrix} a^2 & 0
\\0 & \frac{1}{a^2}\end{pmatrix} ,
\end{equation*}
and so,
\begin{equation*}
e_n(X(a),y_1)=\begin{pmatrix} a^{2^n} & 0 \\0 &
\frac{1}{a^{2^n}}\end{pmatrix}.
\end{equation*}

Since $A=X(c),$ then 
$e_n(A,y_1)=-id.$ In addition, $A=e_1(x_1,y_1)$, and hence $e_{n+1}(x_1,y_1)=-id $.
But then $e_{n+1}(x_0,y_0)$ is
conjugate to $-id,$ and therefore $e_{n+1}(x_0,y_0)=-id$ as well.
\end{proof}

\begin{remark} \label{comm.min.id}
If $n>q$ then the equation $c^{2^n}=-1$ has no solution in
$\BF_{q^2}$, and hence $e_n(x,y)\ne -{id}$ for every $x,y\in
\SL(2,q)$.

However, $-id$ can be written as a commutator of two matrices in
$\SL(2,q)$, where $q$ is odd. Indeed, take $a,b \in \BF_q$
satisfying $a^2+b^2=-1$, then
$$
    \left[ \begin{pmatrix}0 & 1 \\ -1 & 0 \end{pmatrix},
    \begin{pmatrix}a & b \\ b & -a \end{pmatrix} \right] = -id.
$$
(See \cite{Th} and \cite{Bl} for a general result regarding central
elements in $\SL(n,q)$ and other quasisimple groups).
\end{remark}


We moreover show that there exists an infinite family of finite
fields $\BF_q$, for which the $n$-th Engel word map in $\SL(2,q)$ is
not even almost surjective for sufficiently large $n \geq n_0(q)$.

\begin{prop}\label{sne1}
Let $\BF_q$ be a finite field which does not contain $\sqrt{2}$. Then there exists
some integer $n_0=n_0(q)$ such that for every $n>n_0$, $\rho_n \neq 1$.
\end{prop}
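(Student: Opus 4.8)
The plan is to strip $\rho_n$ down to a one--variable iteration and then argue combinatorially about the forward orbit of the value $1$. Fixing $t$, write $g_t(s)=s^2-st^2+2t^2-2$ for the first coordinate of $\mu(s,t)$, so that $\mu^{(n)}(s,t)=(g_t^{(n)}(s),t)$ and hence $\rho_n(s,t)=g_t^{(n)}(s)$. Thus $\rho_n$ attains the value $1$ on $\BF_q$ exactly when there are $t,s\in\BF_q$ with $g_t^{(n)}(s)=1$, i.e.\ when $1$ admits a $g_t$-preimage chain of length $n$ lying entirely in $\BF_q$. The goal is to show that, under the hypothesis $\sqrt2\notin\BF_q$, no such chain exists once $n$ is large, uniformly in $t$.

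The dynamical input is \prref{properties}: item (\ref{p1}) gives $g_t(1)=t^2-1$ and item (\ref{p2}) gives $g_t(t^2-1)=t^2-1$. Hence $t^2-1$ is a fixed point of $g_t$ and the forward orbit of $1$ is $1\mapsto t^2-1\mapsto t^2-1\mapsto\cdots$. Since $\sqrt2\notin\BF_q$, no $t\in\BF_q$ satisfies $t^2=2$, so $t^2-1\ne1$ for every $t\in\BF_q$; therefore $1$ is \emph{strictly} pre-periodic under $g_t$, reaching the fixed point $t^2-1$ after one step but never being itself periodic. This is the single place where the hypothesis on $\sqrt2$ is used.

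I would then run a pigeonhole argument. Suppose $g_t^{(n)}(s)=1$ for some $s\in\BF_q$, and set $s_0=s$, $s_{i+1}=g_t(s_i)$, so that $s_n=1$ and $s_{n+1}=t^2-1$. First, none of $s_0,\dots,s_n$ equals $t^2-1$: if $s_i=t^2-1$ for some $i\le n$ then, $t^2-1$ being fixed, $s_j=t^2-1$ for all $j\ge i$, giving $s_n=t^2-1\ne1$. Second, $s_0,\dots,s_n$ are pairwise distinct: a repetition $s_i=s_j$ with $i<j\le n$ would make the orbit periodic from index $i$ onward, whence $s_{n+1}=t^2-1$ (a fixed point) would lie on the resulting cycle; but a cycle containing a fixed point must equal $\{t^2-1\}$, forcing $s_i=t^2-1$ and contradicting the first point. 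Consequently $s_0,\dots,s_n$ are $n+1$ distinct elements of $\BF_q\setminus\{t^2-1\}$, so $n+1\le q-1$, that is $n\le q-2$.

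It follows that any solution of $\rho_n=1$ over $\BF_q$ forces $n\le q-2$, so taking $n_0=q-2$ gives $\rho_n\ne1$ for all $n>n_0$. I expect the only subtle step to be the distinctness claim, i.e.\ excluding the possibility that $1$ is revisited through some genuine cycle of $g_t$; this is exactly ruled out because the orbit of $1$ terminates at the fixed point $t^2-1\ne1$, which in turn rests on $\sqrt2\notin\BF_q$. The remaining bookkeeping is elementary, and the explicit bound $n_0=q-2$ is consistent with the earlier remark that it suffices to test $n\le q$.
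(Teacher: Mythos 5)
Your argument is correct and takes essentially the same route as the paper's proof: both rest on \prref{properties}(\ref{p1})--(\ref{p2}), namely that $1$ maps to the fixed point $t^2-1$, which differs from $1$ exactly when $t^2\ne 2$, so $1$ is never periodic and therefore leaves the image of $\rho_n$ once $n$ exceeds every preperiod. Your pigeonhole count of the distinct elements $s_0,\dots,s_n$ is just a more explicit rendering of the paper's observation that every point of $\mu_{n_0}(\BA^2(\BF_q))$ is periodic, with the minor bonus of producing the concrete bound $n_0=q-2$.
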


\begin{proof}
Since the set of points $(s,t)\in\BA^2(\BF_q)$ is finite,
every point is either periodic or preperiodic for $\mu_n$. This means that
for every $(s,t)\in\BA^3(\BF_q)$ there are numbers $\tilde n(s,t)$ and $m(s,t)<\tilde n(s,t)$
 such that
$$\mu_{\tilde n(s,t)}(s,t)=\mu_{m(s,t)}(s,t).$$
For a point $(s,t)$ we define $n(s,t)$ as the minimum of all possible $\tilde n(s,t).$

Let $$n_0 = \max\{n(s,t): \ (s,t)\in\BA^2(\BF_q)\}.$$ Then every
$(s,t)\in R_{n_0}=\mu_{n_0}(\BA^2(\BF_q))$ is periodic and
$R_n=R_{n_0}$ for any $n\ge n_0$. In order to show that $\rho_n \neq
1$ it is sufficient to show that for any $t$, the point
$(1,t)\not\in R_{n_0}$, i.e. to show that it is not periodic.
Indeed, $\mu(1,t)=(t^2-1,t)$ which is a fixed point for any $t$.
Thus, for every $k>0$ we have $\mu_k(1,t)=(t^2-1,t)\ne(1,t)$, if
$t^2\ne 2$.
\end{proof}


On the other hand, we show that there are certain infinite families
of finite fields $\BF_q$ for which the $n$-th Engel word map in
$\PSL(2,q)$ is always surjective for every $n \in \BN$. The first
family consists of all finite fields of characteristic $2$, and the
second family contains infinitely many finite fields of odd
characteristic.


\begin{prop}\label{prop.even}
For every $n \geq 1$, the Engel word map $e_n$ is surjective on the group $\PSL(2,q)$ for $q=2^e$, $e>1$.
\end{prop}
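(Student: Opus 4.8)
The plan is to prove surjectivity of $e_n$ on $\PSL(2,q)$ when $q=2^e$, $e>1$, by reducing, via \propref{if}(A) and \corref{plmin}, to a statement purely about the trace-iteration map $\rho_n$. Since $q$ is even, \propref{image}(1) gives $\Psi_q=A(\BF_q)$, so $Y_q=\lambda_2(\Psi_q)=\BA^2_{s,t}(\BF_q)$ and $T_n(\BF_q)=\rho_n(\BA^2_{s,t}(\BF_q))$. By \propref{if}(A), it therefore suffices to show that $\rho_n\colon\BA^2_{s,t}(\BF_q)\to\BA^1_s(\BF_q)$ is \emph{surjective}, i.e.\ that every $a\in\BF_q$ is attained. (In characteristic $2$ there is no $-id$ and no sign ambiguity, so surjectivity of $\rho_n$ directly yields surjectivity of $e_n$ on $\PSL(2,q)=\SL(2,q)$ for each $n$; note the index shift, so one works with $e_{n+1}$ and then relabels.)

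The main step is thus to exhibit, for each target value $a\in\BF_q$, a pair $(s,t)$ with $\rho_n(s,t)=a$. The natural line to exploit is $t=0$, where \prref{properties}(\ref{p4}) gives the clean formula $\rho_n(s,0)=x^{2^n}+x^{-2^n}$ whenever $s=x+x^{-1}$. In characteristic $2$ the Artin--Schreier/Frobenius structure is decisive: the map $x\mapsto x^{2^n}$ is the $n$-th power of Frobenius, hence a \emph{bijection} of $\ov{\BF}_q$ (and an automorphism of every finite subfield). So the plan is: given $a\in\BF_q$, first solve $a=y+y^{-1}$ for some $y$ lying in $\BF_q$ or in $\BF_{q^2}$ — this is always possible because $y+y^{-1}=a$ is the quadratic $y^2+ay+1=0$ (using $-1=1$), which has a root in $\BF_{q^2}$, and the two roots $y,y^{-1}$ are Galois-conjugate so $y+y^{-1}=a\in\BF_q$ is consistent. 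Then set $x=y^{2^{-n}}$, i.e.\ apply the inverse of the $n$-th Frobenius power (available since Frobenius is bijective), so that $x^{2^n}=y$ and hence $x^{2^n}+x^{-2^n}=y+y^{-1}=a$. Finally take $s=x+x^{-1}$; since $x$ and $x^{-1}=x^{2^{-n}\cdot(-1)}$ are again Galois-conjugate over $\BF_q$, one checks $s\in\BF_q$, and then $\rho_n(s,0)=a$ by \prref{properties}(\ref{p4}).

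The one point requiring care — and the likely main obstacle — is the bookkeeping over which field $x$ and $s$ actually live in: the formula $\rho_n(s,0)=x^{2^n}+x^{-2^n}$ presumes $s=x+x^{-1}$ with $x$ possibly in $\BF_{q^2}\setminus\BF_q$, and one must confirm that the resulting $s$ is genuinely $\BF_q$-rational so that $(s,0)\in\BA^2_{s,t}(\BF_q)$. This is exactly where the even characteristic helps: Frobenius $x\mapsto x^2$ commutes with taking $x\mapsto x^{-1}$ and with the Galois action, so the $\BF_q$-conjugacy of the pair $\{y,y^{-1}\}$ is preserved under applying inverse Frobenius, keeping $s=x+x^{-1}$ fixed by $\mathrm{Gal}(\BF_{q^2}/\BF_q)$ and hence in $\BF_q$. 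Once rationality is secured, $(s,0)$ is a legitimate point of $\BA^2_{s,t}(\BF_q)$, $a=\rho_n(s,0)\in T_n(\BF_q)$, and $a\ne -2$ is automatic in characteristic $2$ (indeed $-2=0$ and $0$ is covered by this construction too), so \propref{if}(A)(i)$\Rightarrow$(ii) delivers every $z\in\tG$ with $\tr(z)=a$ as a value of $e_{n+1}$. As $a$ ranges over all of $\BF_q$, every conjugacy class is hit, giving surjectivity of $e_{n+1}$ onto $\PSL(2,q)$ for every $n$, which is the claim.
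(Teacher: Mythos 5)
Your proposal is correct and follows essentially the same route as the paper: restrict $\rho_n$ to the line $t=0$ and exploit that in characteristic $2$ the relevant map is a power of Frobenius, hence bijective on $\BF_q$. The paper gets there more directly by observing that $\mu(s,0)=(s^2,0)$ in characteristic $2$, so $\rho_n(s,0)=s^{2^n}$ is already an isomorphism of $\BA^1_s(\BF_q)$; your detour through \prref{properties}(\ref{p4}) and the Galois-conjugacy of $\{y,y^{-1}\}$ is valid but unnecessary.
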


\begin{proof}
In this case
\beq\label{qeven1}
    \mu(s,t) = (s^2 - st^2,t), \quad
    \rho(s,0) = s^2.
\ef

Thus $\rho(s,0)$ is an isomorphism of $\BA^1_{s}(\BF_q),$ as well as
any of its iterations $\rho_n(s,0)$. According to \propref{if}, this
implies the surjectivity of the $n$-th Engel word map on
$\PSL(2,q)=\SL(2,q)$.
\end{proof}


\begin{prop}\label{prop.sqrt}
For every $n \geq 1$, the Engel word map $e_{n+1}$ is surjective on the group $\PSL(2,q)$,
if $\sqrt{2}\in \BF_q$ and $\sqrt{-1}\not\in \BF_q$.
\end{prop}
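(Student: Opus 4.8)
The plan is to apply \corref{plmin}: since $\sqrt{-1}\notin\BF_q$ forces $q$ odd, and since $q=3$ fails $\sqrt2\in\BF_q$ (so $q>3$), it suffices to show that for every $a\in\BF_q$ at least one of $a,-a$ lies in $T_n(\BF_q)=\rho_n(Y_q)$. The two hypotheses $\sqrt2\in\BF_q$ and $\sqrt{-1}\notin\BF_q$ will be used to produce enough values of $\rho_n$.

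First I would record the arithmetic content of $\sqrt{-1}\notin\BF_q$, namely $q\equiv 3\pmod 4$, so that $q-1=2m$ with $m$ odd. In the cyclic group $\BF_q^\ast$ the map $x\mapsto x^{2^n}$ then has image of index $\gcd(2^n,q-1)=2$, i.e. the group of nonzero squares. Writing $P=\{x^2:x\in\BF_q\}=\{0\}\cup\mathrm{QR}$, this gives $\{w^{2^n}:w\in\BF_q\}=P$ for every $n\ge 1$. The same hypothesis also makes $-1$ a nonsquare, so $-\mathrm{QR}=\mathrm{NQR}$; this I will use at the very end.

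Next I would exhibit two explicit pieces of $T_n(\BF_q)$. Using $\sqrt2\in\BF_q$, choose $t$ with $t^2=2$; since then $t^2\neq 4$ and $t\neq 0$, every point $(s,t)$ lies in $Y_q$, and \prref{properties}(\ref{p5}) gives $\rho_n(s,t)=(s-1)^{2^n}+1$. As $s$ ranges over $\BF_q$ the quantity $(s-1)^{2^n}$ ranges over $P$ by the previous paragraph, whence $P+1\subseteq T_n(\BF_q)$. On the other hand \corref{p7} gives $t^2-1\in T_n(\BF_q)$ for every $t\in\BF_q$, and as $t^2$ ranges over $P$ this yields $P-1\subseteq T_n(\BF_q)$.

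Finally I would verify that these two families already cover $\BF_q$ up to sign. Using $-\mathrm{QR}=\mathrm{NQR}$ one computes $1-P=\{1\}\cup(1-\mathrm{QR})=\{1\}\cup(\mathrm{NQR}+1)$, while $P+1=\{1\}\cup(\mathrm{QR}+1)$, so their union is $\{1\}\cup(\BF_q^\ast+1)=\BF_q$. Hence for any $a$ either $a\in P+1\subseteq T_n$, or $a\in 1-P$, in which case $-a\in P-1\subseteq T_n$; in both cases \corref{plmin} applies and $e_{n+1}$ is surjective on $\PSL(2,q)$. The one genuine obstacle is this last covering step: the $t^2=2$ family alone produces only $P+1$, and $(P+1)\cup\bigl(-(P+1)\bigr)$ really does miss elements, so it is essential both to adjoin the second family $P-1$ from \corref{p7} and to exploit that $-1$ is a nonsquare, which is exactly what makes $1-P$ and $P+1$ interlock.
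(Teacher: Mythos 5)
Your proposal is correct and uses essentially the same ingredients as the paper's proof: Corollary~\ref{plmin}, the family $P-1\subseteq T_n$ from Corollary~\ref{p7}, the family $P+1\subseteq T_n$ from \prref{properties}(\ref{p5}) at $t^2=2$ together with the fact that $q\equiv 3\pmod 4$ makes every square a $2^n$-th power, and the nonsquareness of $-1$ to interlock the two families. The only difference is presentational — you argue directly via the covering $(P+1)\cup(1-P)=\BF_q$, whereas the paper argues by contradiction, assuming neither $a+1$ nor $-a+1$ is a square and deducing $a=b^2+1$.
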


\begin{proof}
By \corref{plmin}, we need to show that either $a\in T_n(\BF_q)$ or $-a\in T_n(\BF_q)$
for every $a\in \BF_q$.

In this case, the map $x\to x^2$ is a bijection on the subset of
perfect squares of $\BF_q$. It follows that if $a=b^2$ for some
$b\in \BF_q$, then for every $n$, there is some $b_n\in \BF_q$ such
that $a=b_n^{2^n}$. Moreover, for every $a\in \BF_q$ either $a=b^2$
for some $b\in \BF_q$ or $a=-b^2$ for some $b\in \BF_q$.

Assume that $z\in\PSL(2,q)$ and $z\ne e_{n+1}(x,y)$. Let $\tr(z)=a$.
Then, by Corollary~\ref{p7}, neither $a+1$ nor $-a+1$ is a square in
$\BF_q$. It follows that $a+1=-c^2$ and $-a+1= -b^2$ for some $b,c
\in \BF_q$. Hence, $a=b^2+1 =b_n^{2^n}+1=\rho_n(b_n+1,\sqrt{2})$
according to \prref{properties}(\ref{p5}), yielding $a\in
T_n(\BF_q)$.
\end{proof}

\section{Engel words in $ \SL(2,q)$ for sufficiently large $q$}\label{main}

In this section we prove \thmref{thm.engel} and show that the $n$-th
Engel word map $e_n$ is almost surjective on $\SL(2,q)$ if $q\geq
q_0(n)$ is sufficiently large. We moreover give an explicit estimate
for $q_0(n)$, which, unfortunately, is exponential in $n$.

Our proof has a geometrical flavor. Let us briefly describe it and
explain the geometric idea behind our calculations. Consider the
diagram \eqref{d1}. Instead of solving the equation $\tr
(e_{n+1}(x,y))=a,$ we look for points defined over a ground field
$\BF_q$ in the curve $\{\mu_n(s,t)=a\}.$ This is an affine curve. In
order to use the Weil inequality, we have to know that it has an
irreducible component defined over the ground field $\BF_q$, and we
need to estimate its genus and the number of punctures.

To this end we represent the curve as a tower of double covers of a
rational curve (see equation \eqref{s1}). The geometrical
interpretation of this procedure is an embedding of the curve into
an affine space of a higher dimension
$\BA^{n+2}_{z_1,...,z_n,\varkappa,t}$. Then we consider the closure
$X$ of this curve in the corresponding projective space
$\BP^{n+2}_{x_1:...:x_n:y:d:w}.$

It appears (see \lemref{infty}) that the intersection of $X$ with
the hyperplane at infinity consists of smooth points defined over
$\BF_q$ for any $q.$ Thus every irreducible component of $X$ is
defined over  $\BF_q$ as well. Indeed, assume that an irreducible
component (say, $X_i$) is defined over an extension of $\BF_q$ and
is not invariant under the action of the corresponding Galois group
$\Gamma$, then the $\Gamma-$invariant points would belong to the
intersection $X_i\cap \gamma (X_i), \gamma\in \Gamma,$ and
therefore, the points defined over $\BF_q$ would not be smooth.

The rest of the proof deals with the estimation of the genus and the
number of punctures.

\medskip

By \propref{prop.even} we may assume that $q$ is odd.
We continue to use the notation introduced in \defnref{morphisms}.

\begin{theorem}\label{qbig}
For every $n \in\BN$ there exists $q_0=q_0(n)$ such that
$\rho_n:\BA^2_{s,t}\setminus \Upsilon\to \BA^1_s$ is surjective for every field $\BF_{q}$ with $q\ge q_0$.
Moreover, if $n$ is a prime then there is an orbit of $\mu$ of length precisely $n$.
\end{theorem}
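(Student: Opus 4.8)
The plan is to establish the surjectivity of $\rho_n$ over large fields via the Weil bound, following the geometric strategy sketched in the paragraphs preceding the statement. Fix a target value $a \in \BF_q$. I want to solve $\rho_n(s,t) = a$, i.e. to find an $\BF_q$-point on the affine curve $\{\mu_n(s,t) = a\}$ lying outside the exceptional locus $\Upsilon$. Because $\mu$ is the iterated map $(s,t) \mapsto (s^2 - st^2 + 2t^2 - 2, t)$ fixing the second coordinate, the condition $\rho_n(s,t) = a$ for fixed $t$ becomes a single polynomial equation of degree $2^n$ in $s$. The key structural observation is that each application of $\mu$ amounts to extracting a square root in an appropriate sense, so the equation $\rho_n = a$ can be unravelled into a tower of quadratic extensions $z_{i}^2 = (\text{polynomial in } z_{i-1}, \varkappa, t)$; introducing the auxiliary variables $z_1,\dots,z_n$ (and a variable $\varkappa$ for $a$) embeds the curve into $\BA^{n+2}_{z_1,\dots,z_n,\varkappa,t}$ as a chain of double covers of a rational base curve. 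This is the content of the forthcoming equation~\eqref{s1}.

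First I would make this tower explicit and take the projective closure $X \subset \BP^{n+2}$, then analyze $X$ at infinity. The crucial input, to be isolated as \lemref{infty}, is that $X$ meets the hyperplane at infinity only in smooth $\BF_q$-rational points, for every $q$. This has two payoffs. It forces each geometric irreducible component of $X$ to be defined over $\BF_q$: if some component $X_i$ were only defined over a proper extension and not $\Gamma$-invariant under the relevant Galois group, its $\Gamma$-conjugates would meet it, and any $\BF_q$-rational (hence $\Gamma$-fixed) intersection point would be singular, contradicting smoothness at infinity. It also controls the number of punctures, i.e. the points of $X$ lying at infinity that must be discarded when passing from the smooth projective model to the affine curve.

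Next I would estimate the genus. Since $X$ is built as a tower of double covers over a rational curve, I would apply the Riemann–Hurwitz formula repeatedly, tracking the ramification introduced at each quadratic step; this yields a bound on $g(X)$ that grows with $n$ — and this is exactly the source of the exponential dependence of $q_0(n)$ on $n$ advertised in the introduction. With an irreducible $\BF_q$-component $X'$ of known genus $g$ and a known bound $N_\infty$ on the number of punctures in hand, the Weil inequality $|\,\#X'(\BF_q) - (q+1)\,| \le 2g\sqrt{q}$ guarantees at least $q + 1 - 2g\sqrt{q} - N_\infty$ affine $\BF_q$-points. For $q \ge q_0(n)$ with $q_0(n)$ chosen so that this quantity exceeds the number of points lying over the exceptional locus $\Upsilon$, I obtain a genuine solution of $\rho_n = a$ off $\Upsilon$, proving surjectivity. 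The main obstacle I anticipate is the geometric bookkeeping of the tower: verifying irreducibility (or at least the existence of a single $\BF_q$-rational component) via the smoothness-at-infinity argument, and correctly computing the ramification contributions at each double cover so that the genus bound is clean enough to extract an explicit $q_0(n)$.

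For the final sentence about prime $n$, I would argue separately that when $n$ is prime the map $\mu$ possesses an orbit of length exactly $n$. The natural route is to exploit the square-map structure already exposed in \prref{properties}: along the slices where $\rho_n$ reduces to an iterated squaring (for instance using item~(\ref{p4}), where $\rho_n(s,0) = x^{2^n} + x^{-2^n}$ with $s = x + x^{-1}$), periodicity of $\mu$ corresponds to the multiplicative order of $2$ modulo the order of a suitable element of $\BF_{q^2}^\times$. One then chooses $q$ so that there is an element whose order has $n$ dividing the order of $2$ but no proper divisor of $n$ doing so; primality of $n$ makes the divisor analysis trivial, since $n$ has no nontrivial proper divisors, so any orbit whose length divides $n$ and is not $1$ must have length exactly $n$. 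This lets me rule out the shorter orbit lengths and conclude the existence of an orbit of the full length $n$.
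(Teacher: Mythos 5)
Your treatment of the surjectivity statement is essentially the paper's own argument: the substitution turning each step of $\mu$ into a quadratic relation, the auxiliary variables $z_1,\dots,z_n,\varkappa,t$ realizing $\{\rho_n=a\}$ as a tower of double covers inside $\BA^{n+2}$, the projective closure, smoothness of the points at infinity (\lemref{infty}) forcing every geometric component to be defined over $\BF_q$ via the Galois-conjugation argument, a genus (or degree) bound feeding into the Weil inequality, and a count of punctures and exceptional points. The only cosmetic difference is that the paper's proof of \thmref{qbig} itself projects to a plane curve and uses the singular-curve Weil bound $(d-1)(d-2)\sqrt q$ with $d\le 3^{2^n}$, reserving the Riemann--Hurwitz genus computation for \propref{genus} and the sharper constant in \corref{less}; either bookkeeping works. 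You do not separately note that the value $s=2$ (i.e.\ $a=0$ after the shift) is attained trivially because $(2,t)$ is a fixed point of $\mu$, which is where the tower can degenerate, but this is minor.

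The genuine gap is in your argument for the orbit of length $n$ when $n$ is prime. The claim is meant to hold for \emph{every} $q\ge q_0(n)$, and your proposed route --- restricting to the slices where $\mu$ is conjugate to a power map (e.g.\ $t=0$, where $\mu$ becomes $x\mapsto x^2$ on $x+x^{-1}$) and then ``choosing $q$'' so that $\BF_{q^2}^\times$ contains an element whose order $N$ has $\mathrm{ord}_N(2)$ behaving correctly --- only produces such an orbit for specially selected $q$. On the slice $t=0$ an orbit of exact length $n$ requires $q^2-1$ to share a suitable divisor with $2^n\mp 1$, a congruence condition on $q$ that fails for infinitely many large $q$; and on a general slice $t$ the map $z\mapsto z(z-\varkappa)$ is conjugate to a general quadratic $w\mapsto w^2+c$, not to squaring, so the order-of-$2$ analysis does not apply there at all. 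The paper avoids this entirely by running the \emph{same} Weil-bound machinery a second time on the periodicity system ($z_1=z_n(z_n-\varkappa)$ closing the cycle), i.e.\ on the curve $\{\rho_n(s,t)=s\}$: its components are again defined over $\BF_q$ and have $\BF_q$-points off $\Upsilon$ once $q\ge 2j_n^4$, and primality of $n$ is used only to guarantee that any such point lies on an orbit of length $n$ or $1$, the length-$1$ locus being two explicitly identified components that can be discarded. Your divisor observation (a prime $n$ has no intermediate orbit lengths to exclude) is the right use of primality, but you need the geometric existence argument, uniform in $q$, rather than a field-by-field arithmetic construction.
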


\begin{proof}
Together with the endomorphism $\mu:\mathbb{A}^2_{s,t} \rightarrow \mathbb{A}^2_{s,t}$
we may define the following endomorphism $m:\mathbb{A}^2_{z,\k} \rightarrow \mathbb{A}^2_{z,\k}$ by
\begin{equation}\label{r}
  m(z,\k)= (z(z-\k),\k).
\end{equation}

A direct computation shows that $\mu$ may be reduced to \eqref{r} by the
substitution $z=s-2, \ \k=t^2-4$.

Similarly to the morphisms $\lambda_1(s,t)=s$ and $\rho_n=\lambda_1\circ \mu^{(n)}$,
we may define the morphisms $l:\mathbb{A}^2_{z,\k} \rightarrow \mathbb{A}^1_{z}, \ \ l(z,\k)=z$ and
$r_n=l\circ  m^{(n)}$.

First, we note that $s=2$ is always in the image of $\rho_n$ (see
\propref{if}). Note also that $(s,t)=(-2,0)$ cannot be a periodic
point, since $\mu(-2,0)=(2,0)$, which is a fixed point.

Now, assume that some $a+2 \in \BF_{q}, a\ne 0$ is in the image of $\rho_n$.
This is equivalent to $a=r_n(z,\k)$ for some $z\in \BF_{q}$ and $\k=t^2-4, \ t\in \BF_{q}$.
The last statement implies that the following system of equations has a solution in $\BF_q$:

\begin{equation}\label{s1}\left\{
\begin{aligned}
    z_2 &= z_1(z_1-\k),\\
    & \ \vdots\\
    z_n &= z_{n-1}(z_{n-1}-\k),\\
    a &= z_n(z_n-\k),\\
    \k &= t^2-4.
\end{aligned}
\right.
\end{equation}

Similarly, the orbit of length $n$ is defined by the following system:
\begin{equation}\label{orb1}\left\{
\begin{aligned}
    z_2 &= z_1(z_1-\k),\\
    & \ \vdots \\
    z_n &= z_{n-1}(z_{n-1}-\k),\\
    z_1 &= z_n(z_n-\k),\\
    \k &= t^2-4.
\end{aligned}
\right.
\end{equation}

If $n$ is a prime, then  system \eqref{orb1} describes all
the points in an orbit either of exact length $n$ or of exact length $1$.
In the latter case, these points are $z_i=\k+1, i=1,\dots,n$ and $z_i=0,i=1,\dots,n$.

Consider the projective space $\BP^{n+2}(\ov {\BF}_q)$ with homogeneous coordinates
$\{x_1:\dots:x_n:y:d:w\}.$ Assume that $z_i=\frac{x_i}{w},i=1\dots,n,$
$\k=\frac{y}{w}, \ t=\frac{d}{w}$.
Then  system \eqref{s1} defines in $\BP^{n+2}$ a projective set

\begin{equation}\label{c1}X=\left\{
\begin{aligned}
    x_2w&=x_1(x_1-y),\\
    &\ \vdots\\
    x_nw&=x_{n-1}(x_{n-1}-y),\\
    aw^2&=x_n(x_n-y), \\
    yw&=d^2-4w^2.
\end{aligned}
\right.
\end{equation}

Similarly,  system \eqref{orb1} defines a projective set

\begin{equation}\label{c2}X_1=\left\{
\begin{aligned}
    x_2w&=x_1(x_1-y),\\
    &\ \vdots\\
    x_nw&=x_{n-1}(x_{n-1}-y),\\
    x_1w&=x_n(x_n-y),\\
    yw&=d^2-4w^2.
\end{aligned}
\right.
\end{equation}

\begin{lemma}\label{infty}
The intersections $S=X\cap\{w=0\}$ and $S_1=X_1\cap\{w=0\}$ consist
of $2^n$ smooth points with $w=0, d=0, y=1$ and $x_i=0$ or $1$ (for
$i=1,\dots,n$).
\end{lemma}

\begin{proof}
If there was a point in $X$ with $w=0, y=0$, then, according to
\eqref{c1} (respectively \eqref{c2}), $d$ and all $x_i$ would vanish
as well, which is impossible. Thus, $y\ne 0$ at the points of $S$
and $S_1$. But then \eqref{c1} (respectively \eqref{c2}) implies
that every $x_i$ is either $0$ or $y$ at the points of $S$
(respectively $S_1$).

It follows, in particular, that the sets $X$ and $X_1$ have no
components of dimension greater than $1$, since the intersection of
each such component with $\{w=0\}$ would be positive dimensional.

Let us compute the Jacobian matrices of these systems. We have for \eqref{c1}:
\begin{equation}\label{Jacobian1}
\begin{bmatrix}
\de_{d}&\de_{w}&\de_{y}&\de_{x_1}&\de_{x_2}& \dots& \de_{x_{n-1}} &\de_{x_n}\\
\\
 0    & -x_2   &  -x_1 &  2x_1-y &  -w     &\dots  &  0           &  0 \\
 0    & -x_3   &  -x_2 &    0    & 2x_2-y  &  -w   &\dots         &  0  \\
 \dots&\dots  &\dots   &\dots    &\dots    &\dots  &\dots        &\dots \\
 0    & -x_n  &  -x_{n-1}  &  0  & 0       &\dots  &  2x_n-y    &  -w   \\
 0    & -2aw  &  -x_n      &  0  & 0       &\dots  &  0         &  2x_n-y \\
 2d   & -8w-y  & -w        &   0  &   0      &\dots &     0      &   0
\end{bmatrix}
\end{equation}

and similarly for \eqref{c2}:

\begin{equation}\label{Jacobian2}
 \begin{bmatrix}
\de_{d}&\de_{w}&\de_{y}&\de_{x_1}&\de_{x_2}& \dots& \de_{x_{n-1}} &\de_{x_n}\\
\\
 0    & -x_2   &  -x_1 &  2x_1-y &  -w     &\dots  &  0           &  0 \\
 0    & -x_3   &  -x_2 &    0    & 2x_2-y  &  -w   &\dots         &  0  \\
 \dots&\dots  &\dots   &\dots    &\dots    &\dots  &\dots        &\dots \\
 0    & -x_n  &  -x_{n-1}  &  0  & 0       &\dots  &  2x_n-y    &  -w   \\
 0    & -x_1  &  -x_n      &  -w  & 0       &\dots  &  0         &  2x_n-y \\
 2d   & -8w-y  & -w        &   0  &   0      &\dots &     0      &   0   \end{bmatrix}
\end{equation}

Since at the points of $S$ and $S_1$ the ranks of these matrices are
$n+1$, every point is smooth.
\end{proof}

\begin{remark}
In particular, we have proved that the map $\rho_n$
is surjective over every algebraically closed field.
Indeed, every component of $X$  has dimension at least one, thus
no fiber is contained in the set $\{w=0\}$.
\end{remark}

Consider an irreducible component $A_i$ (over $\ov{\BF}_q$) of $X$ of degree $d_i$.
If it was not defined over $\BF_{q}$, then every point in $A_i$,
which is rational over $\BF_{q},$ would be singular. But, according to \lemref{infty},
$A_i$ has smooth points defined over $\BF_{q}$ (namely, $A_i\cap S$).
Thus, $A_i$ is defined over $\BF_{q}$.
Similarly, every irreducible component $B_i$ of $X_1$ is defined over $\BF_{q}$.

Let $\om:\BP^{n+2}\to \BP^{2}_{x_{1},d,w}$ be defined as
$\om(x_1:\dots:x_n:y:d:w)=(x_1:d:w).$ Then $\om$ induces a
birational map of every $A_i$ (respectively $B_i$) on its image
$R_i=\om(A_i)$ (respectively $U_i=\om(B_i)$), because of   \eqref{c1}
(respectively \eqref{c2}). Thus, $A_i$ is birational to the closure  $\ov Y^{(n)}$
in $\BP^{2}$ of an irreducible component of the set
 $$\tilde Y^{(n)}=\{r_n(z_1, t^2-4)=a\}\subset \BA_{z_1,t}^2,$$
which becomes $$Y^{(n)}=\{\rho_n(s,t)=a+2\}\subset \BA_{s,t}^2,$$
after the following change of a coordinates
 $s=2+z_1,$ ($x_1\to x_1+2w$).
Similarly, $B_i$ is birational to the closure $\ov Z^{(n)}$ in $\BP^{2}$ of an
irreducible component of the set
$$Z^{(n)}=\{\rho_n(s,t)=s\}\subset \BA_{s,t}^2.$$

The plane curves $R_i$ and $U_i$ are defined over the ground field
as the projections of $A_i$ and $B_i$ respectively. Let $d_n\le
3^{2^n}$ and $j_n\le 3^{2^n}$ be the degrees of $\ov Y^{(n)}$ and
$\ov Z^{(n)}$ respectively.

\vskip 0.2 cm

For the number $N(q)$ of points over the field $\BF_{q}$ in an
irreducible curve $C$ of degree $d$ in $\BP^{2},$ we use the
following analogue of the Weil inequality (see \cite{AP},
\cite[Corollary 7.4]{GL} and \cite [Corollary 2]{LY}),

\begin{equation}\label{weil2}
|C(\BF_{q})-(q+1)|\le (d-1)(d-2) \sqrt{q}.
\end{equation}

Hence, we obtain
\begin{equation}\label{weil3}
|R_i(\BF_{q})|\ge q+1- d_n^2 \sqrt{q} ,\\
\end{equation}
and
\begin{equation}\label{weil4}
|U_i(\BF_{q})|\ge q+1- j_n^2 \sqrt{q}.
\end{equation}

Now we need to check how many of these points can be exceptional or
at infinity. All these points are the intersection points with $4$
lines: $d=0,$  $d=\pm 2w,$ $w=0.$ By the B{\'e}zout's Theorem there
are at most $4d_n$ (respectively, $4j_n$) such points.

For any $q\ge 2d_n^4$ we have
$$q+1-d_n^2  \sqrt{q}\ge 2d_n^4+1-d_n^4\sqrt{2}  = d_n^4(2-\sqrt{2})+1>4d_n.$$

Similarly, for $q\ge 2j_n^4$,

$$q+1-j_n^2  \sqrt{q}\ge 2j_n^4+1-j_n^4\sqrt{2}  = j_n^4(2-\sqrt{2})+1>4j_n.$$

Thus, if $q \geq \max\{2d_n^4, 2j_n^4\}$ then $(R_i\setminus
\Upsilon)(\BF_{q})\ne\emptyset$ and $(U_i\setminus
\Upsilon)(\BF_{q})\ne\emptyset$, which completes the proof of
\thmref{qbig}.
\end{proof}

\begin{corr}\label{gr}
The map $e_n: \tG \times \tG \rightarrow \tG$ is almost surjective
if $\tG=\SL(2,q)$ and $q\geq q_0(n)$ is big enough.
\end{corr}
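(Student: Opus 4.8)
The plan is to obtain this corollary as an essentially formal consequence of \thmref{qbig} and \corref{notex}, once the even characteristic case and the exceptional trace value $a=-2$ are treated separately. Since \thmref{qbig} already supplies the substantive geometric input---surjectivity of $\rho_n$ onto $\BA^1_s$ away from the exceptional locus $\Upsilon$---what remains is bookkeeping, the only genuine care being required at the trace value where $-id$ sits.

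First I would dispose of even characteristic. By \propref{prop.even} the map $e_n$ is surjective onto all of $\SL(2,q)=\PSL(2,q)$ when $q=2^e$; since $-id=id$ here, almost surjectivity in the sense of \defnref{almostsurj} holds trivially. This reduces matters to $q$ odd.

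For $q$ odd I would set $q_0(n)$ to be the constant produced by \thmref{qbig} at index $n-1$, so that for $q\geq q_0(n)$ the morphism $\rho_{n-1}:\BA^2_{s,t}\setminus\Upsilon\to\BA^1_s$ is surjective. (For $n=1$ one has $\rho_0=\lambda_1$, which is surjective with no constraint on $q$, recovering the commutator case.) Thus every trace value $a\in\BF_q$ is of the form $a=\rho_{n-1}(s,t)$ for some non-exceptional $(s,t)$, i.e. with $t\neq 0$ and $t^2\neq 4$. Applying \corref{notex} at index $n-1$ then places every $z\in\tG$ with $\tr(z)=a$ into $E_n$. As $a$ ranges over $\BF_q$ the traces of all elements of $\SL(2,q)$ are attained, giving the inclusion $\SL(2,q)\setminus\{-id\}\subseteq E_n$, which is exactly the asserted almost surjectivity.

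The only step requiring genuine care is the value $a=-2$, where the fibre of the trace contains $-id$. Here I would observe that a non-exceptional preimage $(s,t)\notin\Upsilon$ satisfies $t^2\neq 4$, hence $(s,t)\in\lambda_2(\Psi_q)$, and $t\neq 0$, hence $(s,t)\in Y'_q$; therefore $-2\in T'_{n-1}(\BF_q)$ and the applicable statement is \propref{if}(C), which yields every $z\neq -id$ of trace $-2$ (the condition $t\neq 0$ forces $\tr(y)\neq 0$, so $-id$ is correctly excluded via \remarkref{-id}). No claim about $-id$ itself is needed. The main---and only real---obstacle, the surjectivity of $\rho_{n-1}$, is precisely the content of \thmref{qbig}; beyond the index shift and this $a=-2$ verification the corollary follows at once.
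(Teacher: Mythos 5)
Your proposal is correct and follows essentially the same route as the paper, which likewise deduces the corollary from \thmref{qbig} (surjectivity of $\rho_{n}$ off $\Upsilon$, with the index shift $e_{n+1}\leftrightarrow\rho_n$) together with \corref{notex}. Your explicit handling of even characteristic and of the value $a=-2$ via \propref{if}(C) only spells out details the paper delegates to \propref{prop.even} and to the statement of \corref{notex}.
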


\begin{proof}
According to \corref{notex}, the almost surjectivity of $e_{n+1}$ on
$\SL(2,q)$ follows from the surjectivity of $\rho_n$ on
$\BA^2_{s,t}\setminus \Upsilon$, which was proven in \thmref{qbig}
for any $q\geq q_0(n)$.
\end{proof}

In order to  make the estimation for $q_0(n)$ more precise a
detailed study of  system ~\eqref{c1} is needed.

\begin{prop}\label{genus}
The curve $X$ defined in \eqref{c1} is irreducible provided $a\ne
0$. Let $\tilde \nu :\tilde X\to X $ be the normalization of $X.$
Then the genus $g(\tilde X)\leq 2^{n}(n-1)+1$ and $\tilde
\nu^{-1}(S) $ contains $2^n$ points.
\end{prop}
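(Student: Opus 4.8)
Looking at this problem, I need to prove that the curve $X$ defined by system \eqref{c1} is irreducible (when $a \neq 0$), has genus bounded by $2^n(n-1)+1$, and that the preimage of the points at infinity $S$ under normalization contains $2^n$ points.

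Let me think through the structure here.

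The curve $X$ is built as a tower of double covers. The variables are $z_1, \ldots, z_n, \k, t$ with relations:
- $z_{i+1} = z_i(z_i - \k)$ for $i = 1, \ldots, n-1$
- $a = z_n(z_n - \k)$
- $\k = t^2 - 4$

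The map $\omega$ projects to $(x_1 : d : w)$, so the function field is generated by $z_1$ and $t$ (with $\k = t^2 - 4$).

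So I should think of $X$ as sitting over the $(z_1, \k)$ or $(z_1, t)$ plane. Since $\k = t^2 - 4$, there's a double cover from the $t$-line to the $\k$-line. Then each equation $z_{i+1} = z_i(z_i - \k)$ determines $z_{i+1}$ from $z_i$ — wait, that's not a cover, that determines $z_{i+1}$ algebraically. Let me reconsider.

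Actually going backwards: given $a$ and $\k$, solving $a = z_n(z_n - \k)$ for $z_n$ is a double cover (quadratic in $z_n$). Then $z_n = z_{n-1}(z_{n-1} - \k)$ solves for $z_{n-1}$ as double cover, etc. So $X$ maps to the $\k$-line (equivalently $t$-line) as a tower.

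Let me count the genus via Riemann-Hurwitz and ramification. The tower structure gives degree $2^n$ over the $\k$-line (or $2^{n+1}$ counting the $t$-to-$\k$ cover? No — let me recount).

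Let me set up the proof.

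=== PROOF PROPOSAL ===

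The plan is to realize $X$ as a tower of double covers over a rational base and apply the Riemann--Hurwitz formula, tracking ramification at each level. I view $X$ through the coordinates $(z_1,\dots,z_n,\k)$, recalling that $\k=t^2-4$ defines a further double cover. Working backwards, the bottom equation $a=z_n(z_n-\k)$ expresses the base as the $\k$-line $\BA^1_\k$, and solving successively for $z_n, z_{n-1},\dots,z_1$ presents the affine curve $\tilde Y^{(n)}=\{r_n(z_1,\k)=a\}$ as an $n$-fold tower of double covers $C_n\to C_{n-1}\to\cdots\to C_0=\BA^1_\k$, where each step $z_{i+1}=z_i(z_i-\k)$ is quadratic in $z_i$. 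The full curve $X$ is then the fibre product of this tower with the double cover $t\mapsto \k=t^2-4$.

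First I would establish irreducibility. Since \lemref{infty} shows the intersection with $\{w=0\}$ consists of $2^n$ smooth $\BF_q$-points and every component of $X$ meets $\{w=0\}$, it suffices to rule out a splitting of any double cover in the tower. A cover $z_{i+1}=z_i(z_i-\k)$ over a base $C_i$ fails to be irreducible precisely when the relevant discriminant becomes a square in the function field of $C_i$; the substitution $z_i\mapsto z_i-\k/2$ turns the fibre equation into $z_i^2=z_{i+1}+\k^2/4$, so reducibility would force $z_{i+1}+\k^2/4$ to be a square, which the genericity of the tower (and the hypothesis $a\neq 0$, preventing the bottom fibre from degenerating) excludes. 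I would verify this at the bottom level and propagate upward, using that a square in an extension forces a square (or a controlled ramification pattern) downstairs.

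Next I would compute the genus by Riemann--Hurwitz applied to the whole tower $\tilde X\to \BP^1_t$. The degree of the composite is $2^n$ (the tower over $\k$) times the structure relating $t$ and $\k$; since $\k=t^2-4$ is already absorbed into the base choice, I treat $\tilde\nu:\tilde X\to X$ as the normalization and compute $g(\tilde X)$ from the $2^n$-to-one map to $\BP^1$. At each of the $n$ double-cover stages the ramification is governed by the vanishing of $\k^2/4+(\text{next variable})$, contributing a bounded number of branch points; summing the Riemann--Hurwitz contributions $2g_{i+1}-2=2(2g_i-2)+(\text{ramification})$ across the $n$ levels, with ramification at each stage at most a constant multiple of the degree $2^i$, yields the telescoping bound $g(\tilde X)\le 2^n(n-1)+1$. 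The linear-in-$n$ factor $(n-1)$ reflects the accumulation of one ramification layer per level of the tower.

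Finally, for the count of $\tilde\nu^{-1}(S)$: the $2^n$ points of $S$ lie at $w=0,d=0,y=1$ with each $x_i\in\{0,1\}$, and \lemref{infty} already established these are smooth points of $X$. Since normalization is an isomorphism over the smooth locus, $\tilde\nu^{-1}(S)$ consists of exactly $2^n$ points. The main obstacle will be the irreducibility argument: I must show no intermediate double cover splits, which requires ruling out that any discriminant along the tower becomes a square in the preceding function field — here the smoothness of the points at infinity from \lemref{infty} is the decisive leverage, since it forces every component to be defined over $\BF_q$ and to pass through the full fibre at infinity, making a nontrivial splitting incompatible with the connectivity the tower inherits from $a\neq 0$.
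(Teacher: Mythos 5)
Your overall strategy --- realizing $X$ as a tower of double covers, getting irreducibility from ramification, bounding the genus by Riemann--Hurwitz level by level, and using the smoothness of $S$ to count $\tilde\nu^{-1}(S)$ --- is exactly the one the paper follows, and the last step (normalization is an isomorphism over the smooth locus, so $\tilde\nu^{-1}(S)$ has $2^n$ points) is complete as you state it. But both substantive claims are left as assertions. For irreducibility you say that a splitting of the cover $z_{i+1}=z_i(z_i-\k)$ would force $z_{i+1}+\k^2/4$ to be a square in the function field below, ``which the genericity of the tower excludes''; genericity is not an argument here. What actually excludes it --- and what the paper computes --- is that each double cover $C_{k-1}\to C_k$ has honest ramification points: they sit at $x_{k-1}=y/2$, $x_k=-y^2/(4w)$, one checks $w\neq 0$ there (via \lemref{infty}), and eliminating variables shows they are cut out by an equation $a=p_{n-k+1}(y)$ with $\deg p_{n-k+1}=2^{n-k+2}$, hence form a nonempty finite set. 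A ramified double cover of an irreducible curve is connected, which is the irreducibility you need. Your appeal to \lemref{infty} forcing components to be defined over $\BF_q$ addresses rationality of components (used elsewhere, in \thmref{qbig}), not absolute irreducibility, so it cannot substitute for this computation.

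The same missing count is what produces the genus bound. Saying the ramification at each stage is ``at most a constant multiple of the degree'' cannot pin down the constant in $2^n(n-1)+1$: you need the explicit estimate $l\le 2^{n-k+2}$ for the number of branch points of $C_{k-1}\to C_k$, fed into the induction $g_{k-1}=2g_k-1+l/2$, to reach $g(C_1)\le 2^{n-1}(n-2)+1$, and then one further double cover (adjoining $wy=d^2-4w^2$, ramified at the $2^n$ points of $S$ and at most $2^n$ additional points with $y=-4w$) to get $g(\tilde X)\le 2g(C_1)-1+2^n=2^n(n-1)+1$. Note also that this last cover is where \lemref{infty} genuinely drives irreducibility: $d=0$ on $S$ and $X$ is smooth there, so the points of $S$ are ramification points of $X\to C_1$. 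Until the ramification loci are located and counted, neither the irreducibility nor the stated numerical genus bound is established.
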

\begin{proof}

We will work over an algebraic closure of a ground field.
For $k=1,\dots,n$, we denote by $C_k$ a curve defined in $\BP^{n-k+2}$ by
\begin{equation}\label{ck}C_k=\left\{
\begin{aligned}
    x_{k+1}w &= x_k(x_k-y),\\
    &\ \vdots\\
    x_nw &= x_{n-1}(x_{n-1}-y),\\
    aw^2 &= x_n(x_n-y).
\end{aligned}\right.
\end{equation}

\begin{lemma}\label{genusc1}
If $a\ne 0$ and $q$ is odd, then the system \eqref{ck}  for $k=1$ defines in $\BP^{n+1}$
a smooth irreducible projective curve $C_1$ of genus $g(C_1)\leq 2^{n-1}(n-2)+1$.
\end{lemma}

\begin{proof}
Let $g_k$ denote the genus $g(C_k)$ (if $C_k$ is irreducible).

We shall prove by induction on $r=n-k$ that all curves $C_k$ are irreducible and moreover
$$g_k \le 2^{n-k}(n-k-1)+1.$$

\vskip 0.3 cm

{\bf Step 1.}
It is obvious that $C_n$ ($r=0$) is an irreducible conic in $\BP^2$ and that $g_n=0$.
At a point $(\a:\be:1)\in C_n$ we may use the affine coordinates
$z_i=\fr{x_i}{w},\varkappa=\fr{y}{w}$.
A local parameter on $C_n$ at this point may be taken as   $z_n-\a$, since
$$\varkappa -\be =(z_n-\a) \left(1+\fr{\a-\be}{z_n}\right)$$
(see, for example, \cite[I, Chapter 2, \S 1.6]{DS} for a definition
of a local parameter).

\vskip 0.3 cm

{\bf The induction step.}
Assume that for $r=n-k$ the assertion is valid, namely:
\begin{itemize}
\item The curve $C_k$ is smooth and irreducible;
\item $z_k-\a_k $ is a local parameter at every point $(\a_k : \dots : \a_n:\be:1)\in C_k$ ($w\ne 0$);
\item $g_k \le 2^{n-k}(n-k-1)+1.$
\end{itemize}

The curve $C_{k-1}$ is a double cover of $C_k$,
since to the equations defining $C_k$ one equation for the new variable $x_{k-1}$ is added:
$$x_{k}w=x_{k-1}(x_{k-1}-y).$$
Thus,
$$x_{k-1}=\fr{y}{2}\pm\sqrt{\fr{y^2}{4}+wx_k}.$$

It follows that the double points are:
$$x_{k-1}=\fr{y}{2}, \ x_k=-\fr{y^2}{4w}.$$

Note that $w\ne 0$ at a ramification point.
Indeed, if $w=0$ and $\sqrt{\fr{y^2}{4}+wx_k}=0$ then $y=0$,
which is impossible in light of  \lemref{infty}.
Thus we may take $w=1$.

Hence in affine coordinates, at the double point
$ ( \fr{\be}{2}:-\fr{\be^2}{4}:\dots :\a_n: \be:1)\in C_{k-1}$,
we have
\begin{itemize}
\item
$\fr{\be^2}{4}+z_k$ is
a local parameter on $C_k$ by the induction hypothesis;
\item
$(z_{k-1}- \fr{\be}{2})^2={\fr{\be^2}{4}+z_k}.$
\end{itemize}

It follows that:
\begin{itemize}
\item This point is a ramification point indeed;
\item $z_{k-1}- \fr{\be}{2}$ is a local parameter on $C_{k-1}$ at this point;
\item $C_{k-1}$ is smooth at this point.
\end{itemize}

Outside the ramification points, the projection $C_{k-1}\to C_k$
is \emph{\'{e}tale}. At infinity all the points are smooth, see \lemref{infty}.
Therefore, since $C_k$ is smooth and irreducible by the induction assumption,
then $C_{k-1}$ is smooth and irreducible as well.

\vskip 0.2cm

Let us compute the number of ramification points.
We have:
\begin{equation}\label{points}
\begin{aligned}
    x_{k-1} =\fr{y}{2}, \
    x_k =-\fr{y^2}{4}, \
    x_{k+1} =-\fr{y^2}{4}(-\fr{y^2}{4}-y), \
\dots \ ,  \\
    x_{k+s} =p_s(y), \
\dots \ ,
    x_{n}=p_{n-k}(y), \
    a = p_{n-k+1}(y),
\end{aligned}
\end{equation}
where $p_s(y)$ is a polynomial in $y$ and $\deg p_s(y)=2^{s+1}$.

Hence, the last equation has $l \le 2^{n-k+2}$  distinct roots.

\vskip 0.2cm

By the Hurwitz formula (see e.g. \cite[I, Chapter 2, \S 2.9]{DS})
and the induction estimate for $g_k$ we obtain:
\begin{equation*}
\begin{aligned}
g_{k-1} = 2g_k-1+\fr{l}{2} &\le 2(2^{n-k}(n-k-1)+1)-1+2^{n-k+1} = \\
        & 2^{n-(k-1)}(n-k)-2 \cdot 2^{n-k} + 2^{n-k+1}+2-1=2^{n-(k-1)}(n-k)+1.
\end{aligned}
\end{equation*}
This completes the induction. Thus, $g_1 \le 2^{n-1}(n-2)+1$.
\end{proof}

Now, the curve $X$ is obtained from $C_1$ by adding one more equation:
$$wy=d^2-4w^2,$$
(this is the last equation of  system \eqref{c1}).
It follows that $X$ is a double cover of $C_1$ with double points at $w=0$ or $y=-4w$.
At every such point $y\ne 0.$  Moreover, $X$ is smooth at every point of $S$
(see \lemref{infty}), hence every point of $S$ is a ramification point.
Thus, $X$ is irreducible. Moreover, $\tilde \nu$ is
one-to-one at these points.

Any other double point is either a ramification point or a double self-intersection.
Since $d^2=wy+4w^2$, these are points with $y=-4w.$ Similarly to ~\eqref{points},
there can be at most $2^n$ such points at $X.$

From the Hurwitz formula we obtain:
$$g(\tilde X)\le 2g(C_1)-1+2^n=2(1+2^{n-1}(n-2))-1+2^n=2^n(n-1)+1.$$

This completes the proof of \propref{genus}.
\end{proof}

\begin{remark}
The more detailed analysis of the curve $X$ shows that it is not smooth only if $a=-4.$
If $a\ne -4$ the normalization is not needed.
\end{remark}

\begin{corr}\label{less}
For any $n>2$, the map $e_{n+1}: \tG \times \tG \rightarrow \tG$ is
almost surjective if $\tG=\SL(2,q)$ and $q>2^{2n+3}(n-1)^2$.
\end{corr}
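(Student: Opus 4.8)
The plan is to combine the genus and puncture estimates from \propref{genus} with the Weil inequality \eqref{weil2} to produce an explicit threshold on $q$ guaranteeing that the curve $Y^{(n)}=\{\rho_n(s,t)=a+2\}$ carries an $\BF_q$-point outside the exceptional locus $\Upsilon$. Since almost surjectivity of $e_{n+1}$ follows from surjectivity of $\rho_n$ on $\BA^2_{s,t}\setminus\Upsilon$ by \corref{gr} and \corref{notex}, it suffices to show that for every target value the relevant curve has a non-exceptional rational point whenever $q>2^{2n+3}(n-1)^2$. The case $a+2=2$ (i.e. $a=0$) is already handled since $s=2$ is always in the image of $\rho_n$, so I may assume $a\ne 0$, which is exactly the hypothesis under which \propref{genus} guarantees irreducibility.

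First I would count rational points on the normalization $\tilde X$. By \propref{genus} the curve $X$ is irreducible over $\ov{\BF}_q$ with normalization $\tilde X$ of genus $g(\tilde X)\le 2^n(n-1)+1$, and it is defined over $\BF_q$ by the smoothness argument preceding \propref{genus} (the points of $S$ are smooth and $\BF_q$-rational, so no proper Galois conjugate component can exist). The Hasse--Weil bound for the smooth projective curve $\tilde X$ gives
\beq
|\tilde X(\BF_q)|\ge q+1-2g(\tilde X)\sqrt{q}\ge q+1-2\bigl(2^n(n-1)+1\bigr)\sqrt{q}.
\ef
Next I would subtract the points that are useless to us: those lying over the punctures in $S$ (there are exactly $2^n$ of these by \propref{genus}) together with the points mapping into $\Upsilon$, i.e. those with $t=0$ or $t^2=4$, equivalently with $d=0$ or $d=\pm 2w$. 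Pulling back these lines to $\tilde X$ and using that $\tilde\nu$ has degree one over $S$, a B\'ezout-type count bounds the number of such discarded points by a quantity polynomial in $2^n$ and $n$.

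The final step is the arithmetic: choosing $q>2^{2n+3}(n-1)^2$ should make $q+1-2(2^n(n-1)+1)\sqrt{q}$ exceed the number of discarded points, so at least one genuine non-exceptional $\BF_q$-rational point survives, proving surjectivity of $\rho_n$ onto $a+2$ and hence almost surjectivity of $e_{n+1}$. Concretely, writing $G=2^n(n-1)+1$, the dominant term $q-2G\sqrt{q}$ is positive and large once $\sqrt{q}$ comfortably exceeds $2G$; the stated bound $q>2^{2n+3}(n-1)^2$ gives $\sqrt{q}>2^{n+3/2}(n-1)$, which dominates $4G\approx 2^{n+2}(n-1)$ with room to absorb the puncture and exceptional-point corrections.

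The main obstacle I expect is bookkeeping the exceptional and infinite points on $\tilde X$ precisely enough to keep the threshold as clean as $2^{2n+3}(n-1)^2$ rather than a messier expression. The genus bound $2^n(n-1)+1$ is already exponential, so the Weil main term forces $q$ to grow like $4^n n^2$; the challenge is verifying that the correction from the at most $2^n$ punctures plus the $O(2^n)$ points over $\{d=0,\ d=\pm 2w\}$ is genuinely dominated by the slack in $q+1-2G\sqrt{q}$ at the proposed threshold, and that the earlier rougher estimate in \thmref{qbig} (which used the plane-curve degree bound $d_n\le 3^{2^n}$ and required $q\ge 2d_n^4$) is correctly superseded by the sharper genus-based count here.
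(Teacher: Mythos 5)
Your proposal follows essentially the same route as the paper's own proof: invoke \propref{genus} for irreducibility and the genus bound $g(\tilde X)\le 2^n(n-1)+1$, apply the Weil inequality to the normalization, discard the points of $S$ and those over $\{d=0\}$ and $\{d=\pm 2w\}$ (the paper counts these precisely as $\delta=5\cdot 2^n$, counting the possible self-intersections at $y=-4w$, $d=0$ twice), and verify the arithmetic at $q>2^{2n+3}(n-1)^2$. One small numerical slip: what is needed is $\sqrt{q}>2G$ (plus slack for $\delta$), and $\sqrt{q}>2^{n+3/2}(n-1)$ does \emph{not} dominate $4G\approx 2^{n+2}(n-1)$ as you claim, but it does exceed $2G\approx 2^{n+1}(n-1)$ by a factor of $\sqrt{2}$, which is exactly the margin $(2-\sqrt{2})$ the paper's computation exploits, so the conclusion stands.
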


\begin{proof}
We want to prove that any number $a\in \BF_q$ is attained by $r_n.$
Since the normalization $\tilde X$ of $X$ is defined over the ground
field (see \cite[Chapter 1, \S 6.4 and \S 7]{Sa}), every point
$\tilde x\in \tilde X(\BF_q)$ provides a point $\tilde{\nu}(\tilde
x)\in X(\BF_q).$ In order to exclude the exceptional points, we
should take away from $X$ the following points:
\begin {itemize}
\item $2^n$ points of $S;$
\item $2^{n+1}$ points  with $y =0, d=\pm 2w;$
\item $2^{n}$ points with $y =-4w,d=0.$
\end{itemize}
Since (for $a=-2$) the points with  $y =-4w,d=0$ may be
selfintersections, we should count them twice. Thus
we need that $|\tilde X(\BF_q)|>5\cdot 2^{n}.$

We shall use the Weil inequality (see \cite{AP}) once more.
For a field $\BF_q$ we need that
\begin{equation}\label{Wi}
q+1-2g\sqrt{q}-\delta> 0,
\end{equation}
where, by \propref{genus}, $g\leq 2^{n}(n-1)+1,$ and $\delta =5\cdot 2^{n}$.
Take $q\ge 2^{2n+3}(n-1)^2.$

Then
\begin{equation}
\begin{aligned}
q+1-2g\sqrt{q}-\delta &\ge 2^{2n+3}(n-1)^2+1-2(2^{n}(n-1)+1)2^{n+1}(n-1)\sqrt{2}-5\cdot 2^{n} \\
&\ge 2^n \bigl( 2^{n+3}(n-1)^2-2^{n+2}(n-1)^2\sqrt{2}-4\sqrt{2}(n-1)-5 \bigr)>0,
\end{aligned}
\end{equation}
for any $n>2.$
\end{proof}

\section {Short Engel words in $\PSL(2,q)$} \label{small.n}

In this section we prove \corref{cor.short} and show that for any $n
\leq 4$ the $n$-th Engel word map is surjective for all groups
$\PSL(2,q)$. From \corref{plmin} it follows that in order to prove
that the map $e_{n+1}:G \times G \to G$ is surjective, one should
check that for every $a\in \BF_q$ either $a$ or $-a$ belongs to the
image $T_n$ of $\rho_n$. For a fixed $n$ and $q$ big enough it
follows from \thmref{qbig}, and so for small values of $q$ it may be
verified by computer. Indeed, we have done the following
calculations for small values of $n$ using the \textsc{Magma}
computer program.

\subsection*{Case $e_1=[x,y]$:}\label{1}
In this case, the surjectivity follows from \propref{image},
\propref{if} and \remarkref{comm.min.id}. This provides an
alternative proof to the well-known fact that any element in the
group $\SL(2,q)$ (and in the group $\PSL(2,q)$), when $q>3$, is a
commutator (see \cite{Th}).

\subsection*{Case $e_2=[x,y,y]$:}\label{2}
We need to prove that the map $\rho_1$ is surjective.
Indeed, the equation
$$\rho_1(s,t)-a=s^2-st^2+2t^2-2-a=0$$
defines a smooth curve of genus $1$ with two punctures if $a^2\ne
4$. Thus if $q>7$ it has a point over $\BF_q$. The case $a=2$ was
dealt with in \propref{if}. The cases $q=5,7$ can easily be checked
by a computer. Therefore, $e_2$ is surjective on $\SL(2,q) \setminus
\{-id\}$, and hence on $\PSL(2,q)$, for any $q>3$.

\subsection*{Case $e_3=[x,y,y,y]$:}\label{3}
Recall that by Example \ref{ex.no.sol}, $e_3$ is no longer
surjective on $\SL(2,q)$. In this case, the curve $\rho_2(s,t)-a$
has genus $2^2+1=5$ and it has at most $20$ punctures at $\infty,$
$t^2=4$ and $t=0.$ Thus the techniques of \secref{main} may be
applied for any $q$ which satisfies that
$$q+1-10\sqrt{q}-20>0,$$
namely, for any $q\ge 137$. For $q<137$ the surjectivity on
$\PSL(2,q)$ was checked by a computer.

\subsection*{Case $e_4=[x,y,y,y,y]$:}\label{4}
In this case $g=17$, and the computations were done for all
$q \le 1240$.

\section{Equidistribution of the Engel words in $\PSL(2,q)$} \label{sect.equi}

In this section we prove \thmref{thm.equi} by showing first that the
$n$-th Engel word map is almost equidistributed for the family of
groups $\SL(2,q)$, where $q$ is odd, and then explaining how this
implies that the $n$-th Engel word map is almost equidistributed
(and hence also almost measure preserving) for the family of groups
$\PSL(2,q)$, where $q$ is odd.

More precisely, for $g \in \tG=\SL(2,q)$, let
\[
   E_n(g) = \{(x,y) \in \tG \times \tG: e_n(x,y)=g \},
\]
then by \defnref{def.equi} we need to prove the following:

\begin{prop}\label{equi.q}
If $q$ is an odd prime power, then the group $\tG=\SL(2,q)$ contains
a subset $S= S_{\tG} \subseteq \tG$ with the following properties:
\begin{enumerate}\renewcommand{\theenumi}{\it \roman{enumi}}
\item $|S| = |\tG|(1-\ep)$;
\item $|\tG|(1-\ep) \leq |E_n(g)| \leq |\tG|(1+\ep)$ uniformly for all $g \in S$;
\end{enumerate}
where $\ep \to 0$ as $q \to \infty$.
\end{prop}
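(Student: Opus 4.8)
The plan is to count solutions $|E_n(g)|$ by pushing the count through the trace map and diagram~\eqref{d1}, reducing a question about fibers of $e_n$ over $\tG$ to a question about fibers of $\rho_{n-1}$ over $\BA^1_s$, which is controlled by the geometry already developed in \secref{main}. Concretely, for a fixed $g$ with $\tr(g)=a$, I would first organize the count according to conjugacy: since almost every element of $\tG$ is a regular semisimple element whose conjugacy class is determined by its trace (and has size $|\tG|/(q\mp 1)$), the set $E_n(g)$ is a union of fibers $\pi^{-1}(s,u,t)$ lying over the locus $\{\rho_{n-1}(s,t)=a\}\cap A$. By \thmref{thm.main} and the structure of $\pi$ each such fiber is nonempty and, for generic $(s,u,t)$, has a predictable cardinality (roughly $|\tG|$, since $\dim \tG\times\tG-\dim \BA^3=3$ and a generic fiber of $\pi$ has size on the order of $q^3\approx|\tG|$). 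Thus $|E_n(g)|$ is, up to lower-order corrections, $|\tG|$ times the number of $\BF_q$-points on the affine curve $Y^{(n-1)}=\{\rho_{n-1}(s,t)=a\}$.

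The heart of the argument is then a point count on this curve, for which the tools are already in place. I would take the set $S=S_{\tG}$ to consist of those $g$ whose trace $a$ is \emph{good} in the sense that $a\ne\pm 2$ and the corresponding curve $Y^{(n-1)}$ (equivalently the projective model $X$ of \eqref{c1} and its normalization $\tilde X$) is irreducible over $\BF_q$; by \propref{genus} this holds whenever $a\ne 0$, and the excluded traces form a set of size $O(1)$, so $|S|=|\tG|(1-\ep)$ as required for (i). For such $a$, the Weil estimate \eqref{weil2} applied to $\tilde X$—whose genus is bounded by $g\le 2^n(n-1)+1$ and which has a bounded number $O(2^n)$ of punctures and exceptional points (those with $t=0$ or $t^2=4$ or at infinity, enumerated in the proof of \corref{less})—gives
\begin{equation*}
\bigl| |Y^{(n-1)}(\BF_q)| - q \bigr| \le 2g\sqrt{q}+O(2^n).
\end{equation*}
Since $n$ is fixed while $q\to\infty$, the right-hand side is $o(q)$, so $|Y^{(n-1)}(\BF_q)|=q(1+o(1))$. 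Feeding this back, $|E_n(g)| = |\tG|(1+o(1))$ uniformly over $g\in S$, which is exactly (ii) with $\ep\to 0$.

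The main obstacle, and the step requiring the most care, is making the passage from the curve point count to $|E_n(g)|$ genuinely \emph{uniform} and \emph{two-sided}: I must control the fiber sizes of $\pi$ precisely enough to get both the lower bound $|\tG|(1-\ep)$ and the upper bound $|\tG|(1+\ep)$, not merely nonemptiness as in \propref{if}. This means I need an exact accounting of $|\pi^{-1}(s,u,t)|$ as $(s,u,t)$ ranges over $A$—distinguishing the generic fibers from the boundedly many degenerate ones (reducible or smaller fibers occurring at exceptional traces, central elements, or the locus $t^2=4$)—and I must verify that the total contribution of all degenerate loci is absorbed into the $o(1)$ error. A secondary subtlety is that the count takes place in $\tG=\SL(2,q)$, and the bound for $\BA^2_{s,t}\setminus\Upsilon$ must be compatible with removing the exceptional points $\Upsilon$ without disturbing the leading term; this is handled by the B\'ezout bound of $O(2^n)$ exceptional intersection points from \secref{main}, which is $o(\sqrt q)$ and hence harmless. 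Once these uniformities are established, (i) and (ii) follow, and the reduction from $\SL(2,q)$ to $\PSL(2,q)$ (identifying $g$ with $-g$) then yields \thmref{thm.equi} in the subsequent step.
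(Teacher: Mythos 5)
Your overall strategy is the paper's: convert the fiber count for $e_n$ into a point count on the curve $\{\rho_{n-1}(s,t)=a\}$ via the trace map, bound that count two-sidedly with the Weil inequality using the genus estimate of \propref{genus}, exclude the traces $a=\pm2$, and control the fibers of the projection back to $\tG\times\tG$. But the central counting identity you state is wrong, and wrong in a way that matters. For a single $g$ the set $E_n(g)$ is \emph{not} a union of fibers of $\pi$: two pairs with the same trace triple $(s,u,t)$ give conjugate, not equal, values of $e_n$. What is a union of $\pi$-fibers is $\alpha^{-1}(a)=\bigcup_{\tr(z)=a}E_n(z)$, and since for $a\neq\pm2$ the trace fiber $\tau^{-1}(a)$ is a single conjugacy class, all the $E_n(z)$ with $\tr(z)=a$ have equal size, so the identity you actually need is
\[
|E_n(g)|=\frac{|\alpha^{-1}(a)|}{|\tau^{-1}(a)|},\qquad |\tau^{-1}(a)|=q^2\bigl(1+O(1/q)\bigr).
\]
Moreover, $\alpha^{-1}(a)$ lies over the curve via $\tilde p=p'\circ\pi$, and the fiber of $\tilde p$ over a generic point $(s,t)$ of the curve has size $q^4\bigl(1+O(1/q)\bigr)$, not $q^3$: on top of each $\pi$-fiber there is an extra conic's worth of choices of $u=\tr(xy)$, namely the fiber $C_{s,t}$ of $p'$ from \propref{image}, which your proposal omits entirely. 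Your formula ``$|E_n(g)|\approx|\tG|\cdot|Y^{(n-1)}(\BF_q)|$'' therefore evaluates to $q^3\cdot q=q^4$, which contradicts the conclusion $|E_n(g)|=|\tG|(1+o(1))$ you then assert; the correct arithmetic is $q\cdot q^4/q^2=q^3$.

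The second gap is that you defer exactly the step that constitutes most of the proof. A two-sided, uniform estimate $|\tilde p^{-1}(s,t)|=q^4\bigl(1+O(1/q)\bigr)$ for $(s,t)$ off the locus $\{t^2=4\}\cup\{s=t^2-2\}$ has to be established by hand --- the paper's \lemref{schet} does this by exhibiting both the $u$-count and the count of matrices $x$ with prescribed $\tr(x)$ and $\tr(xy_t)$ as conics with $q\pm1$ points, and multiplying by the $q^2(1+\ep_1)$ matrices $y$ of trace $t$ --- while on the degenerate locus $t^2=4$ one needs only the upper bound $O(q^4)$, which suffices because the curve $\{\rho_{n-1}(s,t)=a\}$ carries just $O(2^n)$ such points. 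Without these two ingredients, the division by $|\tau^{-1}(a)|$ and the exact fiber count of $\tilde p$, the argument does not close; with them it becomes the paper's proof.
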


For the commutator word $e_1=[x,y]$, \thmref{thm.equi} has already
been proved in \cite[Proposition 5.1]{GS}. Hence we may assume that
$n>1$. Following \secref{main} we continue to assume that $q$ is
odd. We maintain the notation of \defnref{morphisms}.

\begin{proof}[Proof of \propref{equi.q}]

Consider the following commutative diagram of morphisms:

\beq\label{d2}
\xymatrix{
{\tG \times \tG} \ar[d]_{\gamma} \ar[dr]^{\alpha} \ar[r]^-{\pi}
& {\BA^3_{s,u,t}} \ar[r]^{p'}
& {\BA^2_{s,t}} \ar[dl]_{\rho_n} \ar[d]^{\mu_n} \\
{\tG} \ar[r]^{\tau} & {\BA^1_{s}} & {\BA^2_{s,t}} \ar[l]_{\lambda_1}
}
\ef

Here, $\g=\th\circ\vp_n=e_{n+1}$, $p'(s,u,t)=(s^2+t^2+u^2-ust-2,t)$,
and $\a$ is a composition of the corresponding morphisms in the
diagram.

We denote $f^{-1}(a)=f^{-1}(a)(\BF_q)$.

Let $a\in \BF_q, \ a\ne \pm 2$. Then $\a^{-1}(a)$ is a union of the
fibers $\G_z=\g^{-1}(z)$, where $z\in\tG$ is an element with
$\tr(z)=a$. Since  $a\ne\pm 2$, any $\G_z$ may be obtained from any
other $\G_{z'}$ (with $\tr(z')=a$) by conjugation, and so
$|E_n(z)|=|E_n(z')|$. Hence, \beq\label{gamma} |\g^{-1}(z)|=\fr{|
\a^{-1}(a)|}{|\tau^{-1}(a)|} . \ef

Recall that $|\SL(2,q)| = q^3-q$.

Take the set $S=S_{\tilde G}=\{z \in \tilde G: \tr(z) \ne \pm 2\}$,
then
$$|S|=q^3-2q^2-q=q^3\bigl(1-O(1/q)\bigr),$$
satisfying condition {\it(i)}.

In order to prove condition {\it(ii)} it is enough to show that for
any $z \in \tG$ with $\tr(z) = a \neq \pm 2$,
\[
    |\g^{-1}(z)| = q^3(1+\tilde \ep),
\]
where $\tilde\ep \to 0$ as $q \to \infty$.

\vskip 0.2cm

It is well-known that \beq\label{tau}
|\tau^{-1}(a)|=q^2(1+\ep_1(q)),\ef where $|\ep_1(q)|\leq\fr{1}{q}$
(see, for example \cite{Do}).

On the other hand, $\a=\rho_n \circ p' \circ \pi$. Let us estimate
$|\a^{-1}(a)|$.

\begin{lemma}\label{schet}
Let $\tilde p=p'\circ\pi.$ Then there are constants $M_1$ and $M_2$
such that for every $(s,t)\in\BA^2_{s,t}$, $s\ne 2$,
\begin{enumerate}
\item  If $t^2\ne 4 $ and $s\ne t^2-2$  then $|\tilde p^{-1}(s,t)|=q^4(1+\ep_2),$
where $|\ep_2|\leq\fr{M_1}{q};$
\item If $t^2= 4 $ then   $|\tilde p^{-1}(s,t)|\leq M_2q^4.$ \end{enumerate}
\end{lemma}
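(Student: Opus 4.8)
The plan is to count the fiber $\tilde p^{-1}(s,t)$ where $\tilde p = p' \circ \pi$, so I need to understand what $\tilde p^{-1}(s,t)$ looks like. We have $\pi(x,y) = (s', u, t')$ and $p'(s',u,t') = ({s'}^2 + {t'}^2 + u^2 - us't' - 2, t')$. So $\tilde p^{-1}(s,t)$ consists of all $(x,y) \in \tG \times \tG$ with $\tr(y) = t$ and $p(\tr(x), \tr(xy), \tr(y)) = s$, i.e.\ $\tr([x,y]) = s$. First I would decompose this count: fix the trace coordinates $(s', u)$ ranging over the conic $C_{s,t} = \{(s',u) : s'^2 + u^2 + t^2 - us't - 2 = s\}$ in the plane $t' = t$, and for each such $(s',u,t)$ count the number of pairs $(x,y)$ with $\pi(x,y) = (s',u,t)$. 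By \thmref{thm.main} each such fiber of $\pi$ is nonempty, and the standard fact is that the generic fiber of the trace map $\pi$ has size $q^3(1 + O(1/q))$ — this is where I would invoke the counting used for \eqref{tau} together with \thmref{thm.main}. Then $|\tilde p^{-1}(s,t)|$ is essentially (number of $\BF_q$-points on the conic $C_{s,t}$) times (generic fiber size of $\pi$).

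The core of the argument is therefore to count $|C_{s,t}(\BF_q)|$ and to control the size of the $\pi$-fiber uniformly. From the analysis in \propref{image}, for fixed $t$ the set $C_{s,t}$ is a smooth conic precisely when $t^2 \ne 4$ and $s \ne t^2 - 2$; the hypothesis $s \ne 2$ together with $t^2 \ne 4$ and $s \ne t^2-2$ keeps us in the smooth, non-degenerate range. A smooth conic over $\BF_q$ with at most two points at infinity has $q + O(1)$ affine $\BF_q$-points, so $|C_{s,t}(\BF_q)| = q(1 + O(1/q))$. Multiplying by the generic $\pi$-fiber size $q^3(1 + O(1/q))$ gives $|\tilde p^{-1}(s,t)| = q^4(1 + \ep_2)$ with $|\ep_2| \le M_1/q$, which is part \emph{(i)}. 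For part \emph{(ii)}, when $t^2 = 4$ the conic $C_{s,t}$ degenerates into a pair of lines (as computed in the proof of \propref{image}), so it has at most $2q + O(1)$ points; multiplying by the fiber-size bound yields $|\tilde p^{-1}(s,t)| \le M_2 q^4$, establishing the required upper bound.

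The main obstacle I anticipate is making the count of the $\pi$-fiber size uniform across the relevant points $(s',u,t)$. The trace projection $\pi$ does not have constant fiber size: over points where the matrices $x, y$ have special eigenvalue structure (for instance when the subgroup generated is reducible, or when $s', t, u$ lie on the locus $p(s',u,t) = 4$ corresponding to commuting pairs) the fiber can be larger or smaller than the generic $q^3$. The careful step is to show that the contribution of these exceptional loci on the conic $C_{s,t}$ is negligible — of lower order in $q$ — so that the product estimate survives with an error of order $1/q$. I would handle this by isolating the reducible locus on $C_{s,t}$, bounding both its cardinality (it is cut out by an additional algebraic condition, so it is $O(1)$ or at most $O(\sqrt q)$ points) and the maximal fiber size over it (bounded by $O(q^3)$, or at worst $O(q^4)$ when the whole group acts), and checking that the excess is absorbed into the constants $M_1, M_2$. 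The degenerate case $t^2 = 4$ only requires the crude upper bound, so there I simply use that every $\pi$-fiber has size $O(q^3)$ and that $C_{s,t}$ has $O(q)$ points.
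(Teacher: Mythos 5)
Your overall decomposition coincides with the paper's: write $\tilde p^{-1}(s,t)$ as the union, over the $\BF_q$-points of the conic $C_{s,t}=\{(s',u):p(s',u,t)=s\}$, of the fibers of $\pi$; count the conic via the case analysis of \propref{image} (smooth with $q\pm1$ points when $t^2\ne4$ and $s\ne t^2-2$, a pair of lines with at most $2q$ points when $t^2=4$); and multiply by the size of a $\pi$-fiber. The conic count is fine. The gap is in the other factor, which is the real content of the lemma: you assert that a $\pi$-fiber has size $q^3(1+O(1/q))$ as a ``standard fact'' to be extracted from \eqref{tau} and \thmref{thm.main}, but \eqref{tau} only counts matrices of a given trace and \thmref{thm.main} only gives nonemptiness of the fiber, so neither yields its cardinality. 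The paper proves the count directly: for $t^2\ne4$ all elements of trace $t$ are conjugate, so one may fix $y_t=\bigl(\begin{smallmatrix} t&1\\-1&0\end{smallmatrix}\bigr)$ and show by the explicit computation \eqref{f4}--\eqref{f5} that the matrices $x$ with $\pi(x,y_t)=(s',u,t)$ again form a conic, hence number $q\pm1$; multiplying by $|\tau^{-1}(t)|=q^2(1+\ep_1)$ gives the fiber size $q^3(1+O(1/q))$ uniformly at \emph{every} point of $C_{s,t}$. Some such argument must be supplied; it is not a citation.

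Your proposed repair of the uniformity issue also has concrete problems. The locus of reducible pairs is $\{p(s',u,t)=2\}$, not $\{p(s',u,t)=4\}$; and since $C_{s,t}$ is exactly the level set $\{p(s',u,t)=s\}$ with $s\ne2$ by hypothesis, the conic is disjoint from that locus, so there is nothing to excise --- this is precisely what the hypothesis $s\ne2$ buys, and it is worth noticing rather than routing around. More seriously, the fallback bounds you allow yourself (up to $O(\sqrt q)$ exceptional points on the conic, each with fiber as large as $O(q^4)$, or even $O(q^3)$) would contribute an excess of order $q^{4.5}$ or $q^{3.5}$; either way the resulting error is $O(q^{-1/2})$, which does not meet the bound $|\ep_2|\le M_1/q$ required by the statement. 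So the uniformity step, carried out as you describe it, would not close. Part (2) is unproblematic: only a crude upper bound $M_2q^4$ is needed, your argument for it matches the paper's, and the $O(q^3)$ bound on every $\pi$-fiber that it requires is easy (the paper obtains it by fixing unipotent representatives $y_r$ and counting the admissible $x$ directly).
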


\begin{proof}  We use the notation of \propref{image}.

{\bf(1)}.  Assume that $t^2\ne 4 $ and $s\ne t^2-2.$ According to
Case 2 of  \propref{image}, \beq\label{i1} |
p'^{-1}(s,t)|=|C_{s,t}(\BF_q)|=q\pm 1. \ef

For a point  $(s',u,t)\in C_{s,t}(\BF_q)$  we shall now compute
 $|\pi^{-1}(s',u,t)|.$

We fix a matrix
 $$y_t=\begin{pmatrix} t & 1 \\-1 & 0\end{pmatrix}$$
with $t^2\ne 4$. Direct computation shows that $(x,y_t)\in
\pi^{-1}(s',u,t)$ if

\begin{equation}\label{f4}
x=\begin{pmatrix} a & b \\ u+b-at & s'-a \end{pmatrix}
\end{equation}
satisfies that
$$\dl^2-\om^2\st^2=p(s',u,t)-2,$$
where
\begin{equation}\label{f5}\begin{aligned}
\om^2 &= t^2-4, \\
\st &=a-\fc{bt}{2}-\fc{s'}{2},\\
\dl &=-u+\fc{s't}{2}+\fc{\om^2b}{2}.\end{aligned}
\end{equation}

Thus, we have a conic once more, and the number of such $x$ is
therefore $q\pm 1$.  Together with \eqref{tau} and \eqref{i1} one
has that
$$|\tilde p^{-1}(s,t)|=(q\pm 1)(q\pm
1)q^2(1+\ep_1(q))=q^4(1+\ep_2(q)),$$ where
$$|\ep_2|\leq\fr{2}{q}+\ep_1(q)+O\bigl(\fr{1}{q^2}\bigr)\leq
\fr{4}{q}.$$

{\bf(2.1)}. Assume that $t=2.$  Then (see Case 2 of
\propref{image}), \beq\label{i2} |C_{s,t}(\BF_q)|\leq 2q, \ef where
$s-2=v^2,$ and $s'-u=\pm v$ for some $v\in\BF_q$ and any
$(s',u,t)\in C_{s,t}(\BF_q).$

We now consider matrices of the form
 $$y_r=\begin{pmatrix} 1 & r \\0 & 1\end{pmatrix}.$$
A pair $(x,y)\in \pi^{-1}(s',u,2)$ if
 $$x=\begin{pmatrix} a & b \\ c & s'-a \end{pmatrix}$$
and $$a(s'-a)-bc=1, \ rc+s'=u .$$
  This implies that
 $$c=\fr{u-s'}{r}=\fr{\pm w}{r}, \ b=\fr{a(s'-a)-1}{c}.$$

Hence for a fixed $y_r$  there are at most $2q$ possible matrices
$x$ defined by the value of $a$ and by the sign of $c.$ Together
with \eqref{tau} we get
 $$|\pi^{-1}(s',u,2)| \leq 2q(q^2+q).$$  It follows from \eqref{i2}
that
 \beq\label{i3}
 |\tilde p^{-1}(s,2)|\leq 2q(q^2+q)2q\leq 5q^4.\ef

 {\bf(2.2)}. Assume that $t=-2.$  Similarly to  {\bf(2.1)}
 we get
 \beq\label{i4}
 |\tilde p^{-1}(s,2)|\leq 2q(q^2+q)2q\leq 5q^4.\ef

To complete the proof we may  take  $M_1=4$ and $M_2=10.$
\end{proof}

We proceed with the proof of the Proposition. By \thmref{qbig} and
\propref{genus}, the  fiber   $R_a=\rho_n^{-1}(a)$ of $\rho_n$ is
isomorphic to a general fiber of $X_{a-2}=r_n^{-1}(a-2)$ and is a
curve  of genus $g_n<G_n,$ where the bound $G_n$ depends only on
$n$. Moreover, it has at most  $2^{n}$ points at infinity and $2
\cdot 2^{n}$ points with $t^2=4.$  It does not have points of the
form $(t^2-2,t),$ since $\mu(t^2-2,t)=(2,t)$, which is is a fixed
point.

Let $A=R_a\cap\{t^2\ne 4\}$ and $B=R_a\cap\{t^2= 4\}.$

According to the Weil estimate, \beq\label{A} |A(\BF_q)| =q(1+\ep_3(n,q)),
\ef where
$$|\ep_3(n,q)|\leq \fr{1+2\sqrt{q}\cdot G_n+3 \cdot 2^{n}}{q}.$$

Hence, according to \lemref{schet}(1), \beq\label{A1} |\tilde
p^{-1}(A)(\BF_q)|=q(1+\ep_3(n,q))q^4(1+\ep_2):=q^5(1+\ep_4(n,q)),\ef
where
$$|\ep_4(n,q)|\leq |\ep_3(n,q)|+|\ep_2|+|\ep_3(n,q)|\cdot |\ep_2|=O\bigl(\fr{1}{\sqrt{q}}\bigr).$$

There are at most $2^{n+1}$ points in $B,$ thus by
\lemref{schet}(2), \beq\label{B1} |\tilde p^{-1}(B)(\BF_q)|\leq
2^{n+1}q^4M_2. \ef

Therefore, \beq\label{ro} |\a^{-1}(a)|=q^5(1+\ep_5(n,q)),\ef where
$$|\ep_5(n,q)|\leq |\ep_4(n,q)|+\fr{2^{n+1}M_2}{q} = O\bigl(\fr{1}{\sqrt{q}}\bigr).$$

Finally, from \eqref{gamma} and \eqref{ro} we obtain
$$|\g^{-1}(z)|=\fr{|\a^{-1}(a)|}{|\tau^{-1}(a)|}=\fr{q^5(1+\ep_5(n,q))}{q^2(1+\ep_1(q))}\\
=q^3\left(1+O\bigl(\fr{1}{\sqrt{q}}\bigr)\right),$$
as needed.
\end{proof}

We shall now show that \propref{equi.q} implies that the $n$-th
Engel word map is also almost equidistributed for the family of
groups $\PSL(2,q)$, where $q$ is odd.

Denote by $\bar{g}$ the image of $g \in \tG=\SL(2,q)$ in
$G=\PSL(2,q)$. Since $q$ is odd, one may identify $\bar{g}$ with the
pair $\{\pm g\}$.

Let $S' = \{g \in \tG: g \in S \text{ and } -g \in S \} \subseteq
S$. Then, by \propref{equi.q}{\it(i)}, $|S'| \leq |\tG|(1-2\ep)$.
Hence, if $\bar{S}'$ is the image of the set $S'$ in $G=\PSL(2,q)$,
then
\[
    |\bar{S}'| \leq |G|(1-2\ep).
\]

For $\bar{g}\in G=\PSL(2,q)$, denote
\[
   \bar{E}_n(\bar{g}) = \{(\bar{x},\bar{y}) \in G \times G: e_n(\bar{x},\bar{y})=\bar{g}
   \}.
\]
Observe that for any $x,y \in \tG$,
\[
    e_n(x,y)=e_n(-x,y)=e_n(x,-y)=e_n(-x,-y),
\]
thus
\[
    4\cdot \bar{E}_n(\bar{g}) = E_n(g) \cup E_n(-g),
\]
(this is a disjoint union) and so
\[
    \frac{|\bar{E}_n(\bar{g})|}{|G|} =
    \frac{|E_n(g)|+|E_n(-g)|}{2\cdot|\tG|}.
\]
Therefore, by \propref{equi.q}{\it(ii)}, for any $\bar{g} \in
\bar{S}'$ one has that
\[
    (1-\ep)|G| \leq \bar{E}_n(\bar{g}) \leq (1+\ep)|G|,
\]
completing the proof of \thmref{thm.equi}.


\section{Concluding remarks}\label{last}

The \emph{trace map} is an efficient way to translate an Algebraic
word problem on $\PSL(2,q)$ to the language of Geometry and
Dynamics, which has already been used fruitfully in \cite {BGK}. We
use it in this paper for studying the Engel words, but, actually,
the same could be done for any other word with the same dynamical
properties. Thus, one may ask the following questions:

\begin{quest}
What are the words for which the corresponding trace map $\psi(s,u,t)=(f_1(s,u,t),f_2(s,u,t),t)$
has the following property $(\star)$ for almost all $q$:

$(\star)$ For every $a\in \BF_q$ the set $\{f_1(s,u,t)=a\}$ is an
absolutely irreducible affine set.
\end{quest}

\begin{quest}
What are the words for which the trace map $\psi(s,u,t)=
(f_1(s,u,t),f_2(s,u,t),t)$ has an invariant plane $A$ and the curves
$\{\psi \bigm|_A=a\}$ are absolutely irreducible for a general $a\in
\BF_q$ and for almost all $q$?
\end{quest}

We believe that these two Questions are closely related to the
following variant of Shalev's Conjecture \ref{conj.surjall}:

\begin{conj}[Shalev]
Assume that $w=w(x,y)$ is not a power word, namely, it is not of the form $v(x,y)^m$ for some
$v \in F_2$ and $m \in \mathbb{N}$.
Then $w(G)=G$ for $G=\PSL(2,q)$.
\end{conj}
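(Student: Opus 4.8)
The plan is to run the trace-map programme of \secref{sect.trace} for a general non-power word $w=w(x,y)$, reducing the group-theoretic surjectivity to an irreducibility statement about the trace polynomial and then counting points by a Weil-type estimate. To $w$ we attach, as in diagram \eqref{d4}, the trace map $\psi(s,u,t)=(f_1(s,u,t),f_2(s,u,t),t)$ with $f_1=\tr(w(x,y))$ and $f_2=\tr(w(x,y)y)$. Since $\pi\colon\tG\times\tG\to\BA^3_{s,u,t}$ is surjective on $\BF_q$-points by \thmref{thm.main}, and an element $z\in\tG$ with $\tr(z)=a\ne\pm2$ is determined up to conjugacy by its trace, the argument of \propref{if} and \corref{plmin} carries over verbatim: to prove $w(G)=G$ with $G=\PSL(2,q)$ it suffices to show that for every $a\in\BF_q$ at least one of $a,-a$ lies in the image of $f_1$ on $\BA^3(\BF_q)$. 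The exceptional traces $a=\pm2$ (unipotent and central classes) and the central element $-id$ are then handled separately, exactly as in Cases 2--3 of \propref{if} and \remarkref{comm.min.id}.

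The crucial — and genuinely open — input is the following. By Borel's theorem \cite{Bo} the word map is dominant, so $f_1$ is a non-constant polynomial and each fiber $V_a=\{f_1=a\}\subset\BA^3$ is a non-empty surface for generic $a$. The claim is that, because $w$ is \emph{not} a proper power, $V_a$ has an absolutely irreducible component defined over $\BF_q$ for all but finitely many $a$, once $q$ is large; this is precisely property $(\star)$ of the Questions above. The power words are exactly the expected obstruction: if $w=v^m$ then $f_1=\tr(v^m)=D_m\!\left(\tr(v)\right)$, where $D_m$ is the degree-$m$ Dickson polynomial normalized by $D_m(x+x^{-1})=x^m+x^{-m}$, so that $V_a=\bigcup_{D_m(b)=a}\{\tr(v)=b\}$ splits into $m$ pieces and $f_1$ fails to be surjective. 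The programme is to prove the converse: if $f_1$ does not factor as $D_m\circ g$ with $m\ge2$ — which should be forced by $w$ not being a proper power — then $f_1-a$ is absolutely irreducible, together with effective control of $\deg V_a$ and of its singular locus.

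Granting this geometric step, the Lang--Weil estimate (or, after restricting to a suitable invariant curve as in \secref{main}, the Weil inequality \eqref{weil2}) gives $|V_a(\BF_q)|=q^{2}+O_w(q^{3/2})$, hence $V_a(\BF_q)\neq\emptyset$ for all $q\ge q_0(w)$, so that $f_1$ is in fact surjective onto $\BF_q$ and the stronger conclusion $a\in\IM(f_1)$ holds for every $a$. For the finitely many $q<q_0(w)$ one verifies surjectivity directly by computer, as was done for the Engel words in \secref{small.n}.

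The decisive obstacle is the irreducibility step: proving, uniformly in $a$ and with explicit degree and singularity bounds, the absolute irreducibility of the trace fibers $\{f_1=a\}$ for an arbitrary non-power word. Unlike the Engel case, where \eqref{psi} collapses $\psi$ onto the plane $\{u=t\}$ and reduces everything to a one-variable dynamical system of the endomorphism $\mu$, a general $w$ yields honestly two-dimensional fibers whose geometry — and even the existence of a uniform $q_0(w)$ — is not understood. This is why the statement is recorded only as a conjecture and is tied directly to the two Questions preceding it.
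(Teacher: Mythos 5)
The statement you were asked to prove is not proved in the paper at all: it appears in \secref{last} as an open conjecture (a variant of \conref{conj.surjall}), offered alongside the two Questions about property $(\star)$ precisely because the authors cannot carry out the step you also identify as missing. So there is no proof of record to compare against, and your submission is, by your own accurate admission, a programme rather than a proof. As a programme it matches what the paper implicitly suggests: attach to $w$ the trace map $\psi=(f_1,f_2,t)$ of diagram \eqref{d4}, use \thmref{thm.main} and the conjugacy-by-trace argument of \propref{if} and \corref{plmin} to reduce $w(G)=G$ for $G=\PSL(2,q)$ to hitting $a$ or $-a$ by $f_1$, establish absolute irreducibility of the fibers $\{f_1=a\}$ over $\BF_q$ (property $(\star)$), and finish by a Lang--Weil count for $q\ge q_0(w)$ plus computer checks below the bound, as in \secref{small.n}. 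Your observation that power words obstruct this via $\tr(v^m)=D_m(\tr(v))$ is correct and is the right heuristic for why the non-power hypothesis enters.

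That said, two points in your sketch are weaker than you present them, beyond the irreducibility step you flag. First, the reduction of the obstruction to Dickson compositions is too quick: by the Bertini--Krull/Stein theory, reducibility of $f_1-a$ over $\ov{\BF}_q$ for more than $\deg f_1$ values of $a$ forces $f_1=h\circ g$ with $\deg h\ge 2$ for \emph{some} polynomial $h$, not necessarily a Dickson polynomial, and the implication ``$w$ not a proper power $\Rightarrow f_1$ not composite'' is itself an open claim, not a consequence of Borel's dominance theorem; moreover, what the conjecture needs is irreducibility uniformly in \emph{all} $a$ (not just generic $a$), which Stein-type results do not give. Second, your assertion that the traces $a=\pm 2$ and the element $-id$ are ``handled separately, exactly as in Cases 2--3 of \propref{if}'' does not carry over verbatim: Case 2 there rests on the explicit Engel identity
\begin{equation*}
e_n\left(\begin{pmatrix}1&b\\0&1\end{pmatrix},\begin{pmatrix}a&0\\0&\frac{1}{a}\end{pmatrix}\right)=
\begin{pmatrix}1&b(1-a^2)^n\\0&1\end{pmatrix},
\end{equation*}
which produces every unipotent, and \remarkref{comm.min.id} on an explicit commutator formula; for a general non-power word no such formula is available, and \propref{prop.minid} shows that the trace-$(-2)$ class can genuinely be missed in $\SL(2,q)$, so showing that regular unipotents (and the relevant trace-$\pm2$ classes in $\PSL$) lie in the image is an additional unsolved step of the programme, not a routine case analysis.
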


One can moreover ask these questions for finite simple non-abelian groups in general.

\begin{quest}
What is an analogue of the trace map for other finite simple
non-abelian groups? In particular, for the Suzuki groups $\Sz(q)$?
(see \cite[\S 4]{BGK}).
\end{quest}

Another interesting question is related to the explicit estimates
for $q$ in \propref{genus}. The genus of the curve $X$ given there
is very large, and this leads to an exponential bound for $q$, as a
function of $n$, for which $X(\BF_q)\ne \emptyset.$ On the other
hand, computer experiments using \textsc{Magma} demonstrate that
this estimate should be at most polynomial. It would be very
interesting to investigate $X$ and to understand this phenomenon.


\section*{Acknowledgments}
Bandman is supported in part by Ministry of Absorption (Israel), Israeli Academy of Sciences
and Minerva Foundation (through the Emmy Noether Research Institute of Mathematics).

Garion is supported by a European Post-doctoral Fellowship (EPDI),
during her stay at the Max-Planck-Institute for Mathematics (Bonn)
and the Institut des Hautes \'{E}tudes Scientifiques
(Bures-sur-Yvette).

This project started during the visit of Bandman and Garion to the
Max-Planck-Institute for Mathematics (Bonn) in 2009 and continued
during the visit of Grunewald to the Hebrew University of Jerusalem
and Bar-Ilan University (2010).

Bandman and Garion are most grateful to B. Kunyavskii for his
constant and very valuable help, to S. Vishkautsan and Eu. Plotkin
for numerous and useful comments. The authors are thankful to A.
Shalev for discussing his questions and conjectures with them. They
are also grateful to M. Larsen, A. Reznikov and V. Berkovich.

The authors are grateful to the referee for his valuable comments.

\vskip 0.2 cm
\emph{Fritz Grunewald has unexpectedly passed away in March 2010.
This project started as a joint project with him, and unfortunately it is published only after his death.
Fritz Grunewald has greatly inspired us and substantially influenced our work. He is deeply missed.}



\begin{thebibliography}{GKNP00}

\bibitem[AP]{AP}
Y.~Aubry and M.~Perret, {\em A Weil theorem for singular curves},
In: ``Arithmetic, Geometry and Coding Theory'', R.~Pellikaan,
M.~Perret, and S.~G.~Vl\u{a}du\c{t} (eds.), Walter de Gruyter,
Berlin--New York, 1996, pp.~1--7.


\bibitem[BGK]{BGK}
T. Bandman, F. Grunewald, B. Kunyavskii,
{\em Geometry and arithmetic of verbal dynamical systems on simple groups},
preprint available at arXiv:0809.0369.


\bibitem[Bl]{Bl}
H. Blau,
{\em A fixed-point theorem for central elements in quasisimple groups},
Proc. Amer. Math. Soc. {\bf 122} (1994), no. 1, 79--84.


\bibitem[Bo]{Bo}
A. Borel, {\em On free subgroups of semisimple groups},
Enseign. Math. (2) {\bf 29} (1983), no. 1-2, 151--164.


\bibitem[Mag]{Magma}
W. Bosma, J. Cannon, C. Playoust,
{\em The Magma algebra system. I. The user language}, J. Symbolic Comput.
\textbf{24} (1997), no.\ 3--4, 235--265


\bibitem[CMS] {CMS}
J. Cossey, Sh. O. Macdonald, A. P. Street,
{\em On the laws of certain finite groups},
J. Australian  Math. Soc. {\bf 11} (1970), 441--489.


\bibitem[DS]{DS}
V.I. Danilov, V.V Shokurov
{\em Algebraic curves, algebraic manifolds and schemes},
Encyclopedia of Math. Sciences, vol. 23, Springer, 1998.


\bibitem[Do]{Do}
L. Dornhoff, {\em Group Representation Theory}, Part A, Marcel Dekker, 1971.


\bibitem[EG]{EG}
E.W. Ellers, N. Gordeev, {\em On the conjectures of J. Thompson and O. Ore},
Trans. Amer. Math. Soc. {\bf 350} (1998), 3657--3671.


\bibitem [Fr] {Fr}
R. Fricke, {\em \"Uber die Theorie der automorphen Modulgruppen},
Nachr. Akad. Wiss. G\"ottingen (1896), 91--101.


\bibitem [FK] {FK}
R. Fricke, F. Klein, {\em Vorlesungen der automorphen Funktionen},
vol.~1--2, Teubner, Leipzig, 1897, 1912.


\bibitem[GS]{GS}
S. Garion, A. Shalev,
{\em Commutator maps, measure preservation, and $T$-systems},
Trans. Amer. Math. Soc. \textbf{361} (2009), no. 9, 4631--4651.


\bibitem[GL]{GL}
S.~R.~Ghorpade and G.~Lachaud, {\em Etale cohomology,
Lefschetz theorems and number of points of singular varieties over finite fields},
Moscow Math.\ J.\ {\bf 2} (2002), 589--631, and CORRIGENDA AND ADDENDA.

\bibitem[Go]{Go}
W. Goldman, {\em An exposition of results of Fricke and Vogt},
preprint available at
http://www.math.umd.edu/\~{}wmg/publications.html .


\bibitem[Ha]{Ha}
R. Hartshorne, {\em Algebraic geometry}, Springer-Verlag, NY, 1977.


\bibitem[KL]{KL}
W.M. Kantor, A. Lubotzky, {\em The probability of generating a finite classical group},
Geom. Ded. {\bf 36} (1990), 67--87.



\bibitem[La]{La}
M. Larsen, {\em Word maps have large image}, Israel J. Math. {\bf 139} (2004), 149--156.


\bibitem[LS]{LS}
M. Larsen, A. Shalev, {\em Word maps and Waring type problems},
J. Amer. Math. Soc.  \textbf{22} (2009),  no. 2, 437--466.


\bibitem[LST]{LST}
M. Larsen, A. Shalev, P.H. Tiep, {\em Waring problem for finite
simple groups}, preprint.


\bibitem[LY]{LY}
D. Leep and C. Yeomans, {\em The number of points on a singular
curve over a finite field}, Arch.\ Math.\ (Basel) {\bf 63} (1994),
420--426.


\bibitem[LOST]{LOST}
M.W. Liebeck, E.A. O'Brien, A. Shalev, P.H. Tiep, {\em The Ore conjecture},
J. European Math. Soc. {\bf 12} (2010), 939--1008.


\bibitem[Ma]{Ma}
W. Magnus, {\it Rings of Fricke characters and automorphisms groups
of free groups}, Math. Z. {\bf 170} (1960), 91--102.


\bibitem[MW]{MW}
D. McCullough, M. Wanderley, {\em Writing Elements of $\PSL(2,q)$ as
commutators}, preprint.


\bibitem[Or]{Or}
O. Ore, {\em Some remarks on commutators}, Proc. Amer. Math. Soc. {\bf 2}
(1951), 307--314.


\bibitem[Sa]{Sa}
P. Samuel, {\em M\'ethodes  d'alg\`ebre abstraite en g\'eom\'etrie alg\'ebrique},
Springer-Verlag, NY, 1967.


\bibitem[Se]{Se}
D. Segal, {\em Words: notes on verbal width in groups},
London Mathematical Society Lecture Note Series {\bf 361}, Cambridge University Press, Cambridge, 2009.


\bibitem[Sh07]{Sh07}
A. Shalev, {\em Commutators, words, conjugacy classes and character methods},
Turkish J. Math. {\bf 31} (2007), suppl., 131--148.


\bibitem[Sh09]{Sh09}
A. Shalev, {\em Word maps, conjugacy classes, and a non-commutative Waring-type theorem},
Annals of Math. {\bf 170} (2009), 1383--1416.


\bibitem[Th]{Th}
R.C. Thompson, {\em Commutators in the special and general linear
groups}, Trans. Amer. Math. Soc. {\bf 101} (1961), 16--33.


\bibitem [Vo] {Vo}
H. Vogt, {\em Sur les invariants fundamentaux des equations
diff\'erentielles lin\'eaires du second ordre}, Ann. Sci. E.N.S,
3-i\`eme S\'er. {\bf 4} (1889), Suppl. S.3--S.70.

\end{thebibliography}
\end{document}